\renewcommand{\epsilon}{\varepsilon}
\newtheorem{theorem}{Theorem}[section]
\newtheorem{lemma}[theorem]{Lemma}
\newtheorem{corr}[theorem]{Corollary}
\newtheorem{proposition}[theorem]{Proposition}
\newtheorem{deff}[theorem]{Definition}
\newtheorem{remark}[theorem]{Remark}
\newcommand{\bth}{\begin{theorem}}
	\newcommand{\ble}{\begin{lemma}}
		\newcommand{\bcor}{\begin{corr}}
			\newcommand{\bdeff}{\begin{deff}}
				\newcommand{\bprop}{\begin{proposition}}
					\newcommand{\ele}{\end{lemma}}
				\newcommand{\ecor}{\end{corr}}
			\newcommand{\edeff}{\end{deff}}
		\newcommand{\eprop}{\end{proposition}}
	\newcommand{\Rn}{{\mathbb R}^n}
	\newcommand{\la}{\lambda}
	\newcommand{\eps}{\varepsilon}
	\newcommand{\e}{\varepsilon}
	\newcommand{\supp}{\text{supp }}
	\renewcommand{\Pi}{\varPi}
	\renewcommand{\epsilon}{\varepsilon}
	\newcommand{\R}{{\mathbb R}}
	\newcommand{\ls}{\lesssim}
	\newcommand{\gs}{\gtrsim}
	\newcommand{\1}{{\rm 1\hspace*{-0.4ex}%
			\rule{0.1ex}{1.52ex}\hspace*{0.2ex}}}
	\newcommand{\ola}{\1_\la}
	\newcommand{\Log}{{\rm Log}}
	\numberwithin{equation}{section}
\begin{document}
		\title[weyl law]{Sharp Pointwise Weyl Laws for Schr\"odinger operators with singular potentials on flat tori}
		\keywords{Eigenfunctions, Weyl law, spectrum}
		\subjclass[2010]{58J50, 35P15}
		\author[]{Xiaoqi Huang}
		\address[X.H.]{Department of Mathematics, University of Maryland, College Park, MD, 20742}
		\email{xhuang49@umd.edu}

		\author[]{Cheng Zhang}
		\address[C.Z.]{Mathematical Sciences Center, Tsinghua University, Beijing 100084, China }
		\email{czhang98@tsinghua.edu.cn}
		\begin{abstract}
			The Weyl law of the Laplacian  on the flat torus $\mathbb{T}^n$ is concerning the number of eigenvalues $\le\la^2$, which is equivalent to counting the lattice points inside the ball of radius $\la$ in $\mathbb{R}^n$. The leading term in the Weyl law is $c_n\la^n$, while the sharp error term $O(\la^{n-2})$ is only known in dimension $n\ge5$. Determining the sharp error term in lower dimensions is  a famous open problem (e.g. Gauss circle problem). In this paper, we show that under a type of singular perturbations one can obtain the pointwise Weyl law with a sharp error term in any dimensions. This result establishes the sharpness of the general theorems for the Schr\"odinger operators $H_V=-\Delta_{g}+V$ in the previous work \cite{hz} of the authors, and extends the 3-dimensional results of Frank-Sabin \cite{fs} to any dimensions by using a different approach. Our approach is a combination of Fourier analysis techniques on the flat torus, Li-Yau's  heat kernel estimates, Blair-Sire-Sogge's eigenfunction estimates, and Duhamel's principle for the wave equation.
		\end{abstract}
		\maketitle
	Let $n\ge2$ and $(M,g)$ be a $n$-dimensional compact Riemannian manifold. Let $\Delta_g$ be the Laplace-Beltrami operator on $M$. It is known that the spectrum of $-\Delta_g$ is discrete and nonnegative (see e.g. \cite{SoggeHangzhou}):
	\[0=\la_0^2<\la_1^2\le\la_2^2\le\la_3^2\le\cdot\cdot\cdot.\]
The associated $L^2$-normalized
eigenfunctions are denoted by  $\{e_j\}_{j=1}^\infty$, respectively so
that
\[	-\Delta_ge_j=\lambda^2_j e_j, \quad \text{and }\, \, 
	\int_M |e_j(x)|^2 \, dx=1.\]
Here $\{e_j\}_{j=1}^\infty$ is an orthonormal basis for $L^2(M)$.  If $N(\la)$ denotes the 
Weyl counting function for $-\Delta_g$, then one has the  integrated Weyl law
\begin{equation}\label{weyllaw}
	N(\la):=\# \{j: \, \la_j\le \la\}=(2\pi)^{-n} \omega_n \text{Vol}_g(M) \, \la^n
	\, +\, O(\la^{n-1}), 
\end{equation}
where $\omega_n=\pi^{\frac n2}/\Gamma(\frac n2+1)$ denotes the volume of the unit ball in
$\Rn$ and $\text{Vol}_g(M)$ denotes the Riemannian volume of $M$.  
This result is due to Avakumovi\'{c}~\cite{Avakumovic} and Levitan~\cite{Levitan}, and it was generalized to general self-adjoint elliptic pseudo-differential operators
by H\"ormander~\cite{HSpec}.  The error term $O(\la^{n-1})$ in \eqref{weyllaw} 
cannot be improved on the standard round sphere $\mathbb{S}^n$. Moreover, the error term can be improved under certain geometric conditions, see e.g. Duistermaat-Guillemin \cite{dg}, B\'erard \cite{berard}, Volovoy \cite{vol}, Iosevich-Wyman \cite{iw}, Canzani-Galkowski \cite{cg}. Note that $N(\la)=\int_M\sum_{\la_j\le\la}|e_j(x)|^2dx$, thus $N(\la)$ can be studied by understanding the kernel of the spectral projection operator $\sum_{\la_j\le\la}e_j(x)\overline{e_j(y)}$. Indeed, the pointwise Weyl law holds 
\begin{equation}\label{ptweyl}
	\sum_{\la_j\le\la}|e_j(x)|^2=(2\pi)^{-n} \omega_n  \, \la^n
	\, +\, O(\la^{n-1}), \ \text{uniformly}\ \text{in}\ x.
\end{equation}
See e.g. Avakumovi\'{c}~\cite{Avakumovic}, Levitan~\cite{Levitan}, H\"ormander~\cite{HSpec}. The error term $O(\la^{n-1})$ 
cannot be improved on the standard round sphere $\mathbb{S}^n$, but it can still be improved under certain geometric conditions.  See also \cite{bhss}, \cite{hs2}, \cite{hs3}, \cite{hsz} for recent related works.

When $M$ is the flat torus $\mathbb{T}^n=\mathbb{R}^n/2\pi\mathbb{Z}^n$, there is a standard orthonormal basis on $L^2(\mathbb{T}^n)$: $\{e^{ij\cdot x}\}$, $j\in\mathbb{Z}^n$, and $e^{ij\cdot x}$ is an eigenfuntion of $-\Delta_g$ with eigenvalue $|j|^2$. Since $|e^{ij\cdot x}|\equiv1$, the integrated Weyl law of $-\Delta_g$ on the flat torus is equivalent to the pointwise version. There has been a lot of research related to the Weyl law on the  torus, which is equivalent to counting the lattice points inside the ball of radius $\la$:
\[	N(\la)=\#\{j\in \mathbb{Z}^n:|j|\le \la\}=(2\pi)^{-n}\omega_n \mathrm{Vol}_g(M) \, \la^n
+r_n(\la), \]
where the error term
\begin{equation}\label{best}r_n(\la)\lesssim\begin{cases}
	\la^{n-2} , \qquad\qquad\,\,\,\,\, \text{if} \,\,\, n\geq 5 \\
	\la^{2}(\log \la)^{2/3} ,\quad \,\,\,\,\,\, \text{if} \,\,\, n=4 \\
	\la^{\frac{21}{16}+\e} , \quad\qquad\quad\,\,\,\, \text{if} \,\,\, n=3 \\
	\la^{\frac{131}{208}}(\log \la)^{\frac{18627}{8320}}  , \,\,\, \text{if} \,\,\, n=2.\\
\end{cases}
\end{equation}
Currently, the exact order of the error term is only known when $n\geq 5$.  See e.g. Hlawka \cite{hla}, Landau \cite{landau}, Walfisz \cite{walf}, and Kr\"atzel \cite{krat}. The above best known results in lower dimensions are due to Walfisz \cite{walf} ($n$=4), Health-Brown \cite{health} ($n$=3), and Huxley \cite{hux} ($n$=2). In dimension 2, it is well known as the Gauss circle problem, and it is still open. For  more details and a discussion of recent progress 
on the problem, see e.g. the survey paper \cite{ivic} and Freeden \cite{free}. See also \cite{br}, \cite{BR2015}, \cite{hztorus} for recent lated works.

Consider	the Schr\"odinger operators $H_V=-\Delta_g+V$ on $M$. We shall assume throughout that the potentials $V$ are real-valued and $V\in \mathcal{K}(M)$, which is the Kato class. Recall that $\mathcal{K}(M)$ is all $V$ satisfying  
	\[\lim_{\delta\to 0}\sup_{x\in M}\int_{d_g(y,x)<\delta}|V(y)|W_n(d_g(x,y))dy=0,\]
	where \[W_n(r)=\begin{cases}r^{2-n},\quad\quad\quad\quad n\ge3\\
		\log(2+r^{-1}),\ \ n=2\end{cases}\]
	and $d_g$, $dy$ denote geodesic distance, the volume element on $(M,g)$.
	Note that by H\"older inequality, we have $L^{p}\subset \mathcal{K}(M)\subset L^1(M)$ for all $p>\frac n2$. The Kato class $\mathcal{K}(M)$ and $L^{n/2}(M)$ share the same critical scaling
	behavior, while neither one is contained in the other one for $n\ge3$. For instance, singularities of the type $|x|^{-\alpha}$ for $\alpha<2$ are allowed for both classes. These singular  potentials  appear naturally in physics, most notably the Coulomb potential $|x|^{-1}$ in three dimensions. See e.g. Simon \cite{SimonSurvey} for a detailed introduction to the Schr\"odinger operators with potentials in the Kato class and their physical  motivations.

The assumption that $V$ is in the Kato class is the ``minimal condition'' to ensure that $H_V$ is essentially self-adjoint and bounded from below, and  eigenfunctions of $H_V$ are bounded, see e.g. Blair-Sire-Sogge \cite{BSS}, Simon \cite{SimonSurvey}.  Since $M$ is compact, the spectrum of $H_V$ is discrete.  Also, the associated
	eigenfunctions are continuous (see \cite{SimonSurvey}).  Assuming, as we may, that
	$H_V$ is a positive operator, we shall write the spectrum
	of $\sqrt{H_V}$  as
	\begin{equation}\label{1.5}
		\{\tau_k\}_{k=1}^\infty,
	\end{equation}
	where the eigenvalues, $\tau_1\le \tau_2\le \cdots$, are arranged in increasing order and we account for multiplicity.  For each $\tau_k$ there is an 
	eigenfunction $e_{\tau_k}\in \text{Dom }(H_V)$ (the domain of $H_V$) so that
	\begin{equation}\label{1.6}
		H_Ve_{\tau_k}=\tau^2_ke_{\tau_k},\ \ {\rm and}\  \ \int_M |e_{\tau_k}(x)|^2 \, dx=1.
	\end{equation}

	After possibly adding a constant to $V$ we may, and shall, assume throughout that $H_V$ is bounded below by one, i.e.,
	\begin{equation}\label{1.7}
		\|f\|_2^2\le \langle \, H_Vf, \, f\, \rangle, \quad
		f\in \text{Dom }(H_V).
	\end{equation}
	Moreover, we shall let
	\begin{equation}\label{1.8}
		H^0=-\Delta_g
	\end{equation}
	be the unperturbed operator.  The corresponding eigenvalues and associated $L^2$-normalized
	eigenfunctions are denoted by $\{\lambda_j\}_{j=1}^\infty$ and $\{e^0_j\}_{j=1}^\infty$, respectively so
	that
	\begin{equation}\label{1.9}
		H^0e^0_j=\lambda^2_j e^0_j, \quad \text{and }\, \, 
		\int_M |e^0_j(x)|^2 \, dx=1.
	\end{equation}
	Both $\{e_{\tau_k}\}_{k=1}^\infty$ and $\{e^0_j\}_{j=1}^\infty$ are orthonormal bases for $L^2(M)$. Let $P^0=\sqrt{H^0}$ and $P_V=\sqrt{H_V}$. We denote the indicator function of the interval $[-\la,\la]$ by $\ola(\tau)$, and for simplicity we write the kernel of the spectral projection on the diagonal $x=y$ as
	\begin{equation}\label{def}\ola(P^0)(x,x)=\sum_{\la_j\le\la}|e_j^0(x)|^2,\ \ \ola(P_V)(x,x)=\sum_{\tau_k\le\la}|e_{\tau_k}(x)|^2.\end{equation}  One has the integrated Weyl law
	\begin{equation}\label{1.10}
		N^0(\la):=\# \{j: \, \la_j\le \la\}=(2\pi)^{-n} \omega_n \text{Vol}_g(M) \, \la^n
		\, +\, O(\la^{n-1}), 
	\end{equation}
	and the pointwise Weyl law 
	\begin{equation}\label{local}
\ola(P^0)(x,x)=(2\pi)^{-n}\omega_n\la^n+O(\la^{n-1}), \ \text{uniformly}\ \text{in}\ x.
	\end{equation}
 The integrated Weyl law \eqref{1.10} is due to Avakumovi\'{c}~\cite{Avakumovic} and Levitan~\cite{Levitan}, and it was generalized to general self-adjoint elliptic pseudo-differential operators
	by H\"ormander~\cite{HSpec}.  The sharpness of \eqref{1.10} means that it cannot be improved for the standard sphere. The original Weyl law  was proved by Weyl \cite{weyl} for a compact domain in $\mathbb{R}^n$ over a hundred years ago. See Arendt, Nittka, Peter and Steiner \cite{anps} for historical background on this famous problem and its solution by Weyl. The pointwise Weyl law \eqref{local} is due to Avakumovi\'{c}~\cite{Avakumovic}, following earlier partial results of Levitan~\cite{Levitan}, \cite{Levitan2}. The error term $O(\la^{n-1})$ is also sharp on the standard sphere. Proofs are presented in several texts, including H\"ormander~\cite{hbook} and Sogge \cite{fio}, \cite{SoggeHangzhou}. The pointwise Weyl law for a  compact domain in $\mathbb{R}^n$ is due to Carleman \cite{carl}. Similar results for compact manifolds with boundary are due to Seeley \cite{seeley1}, \cite{seeley2}.
	
	Recently, Huang-Sogge \cite{hs} proved that if $V\in\mathcal{K}(M)$, then the Weyl law of the same form still holds for the Schr\"odinger operators $H_V$, i.e.
	\begin{equation}\label{1.12}
		N_V(\la):=\#\{k: \, \tau_k\le \la\}=(2\pi)^{-n}\omega_n \mathrm{Vol}_g(M) \, \la^n
		+O(\la^{n-1}).
	\end{equation}
	See also \cite{sob}, \cite{fs}.  In the recent work \cite{hz}, we proved the following pointwise Weyl laws for $H_V$ on general Riemannian manifolds. 
		\begin{theorem}\label{thm0}
		Let $n\ge2$ and 
		\begin{equation}\label{rla}R(\la,x)=\ola(P_V)(x,x)-(2\pi)^{-n}\omega_n\la^n,\end{equation}where $\1_\la(P_V)(x,x)$ is defined in \eqref{def}. If  $V\in \mathcal{K}(M)$, then \[\sup_{x\in M}|R(\la,x)|=o(\la^n).\] Moreover, if $V\in L^n(M)$, then \[\sup_{x\in M}|R(\la,x)|=O(\la^{n-1}).\]
	\end{theorem}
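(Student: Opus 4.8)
\emph{Proof proposal.} I would establish the two assertions by related but distinct routes: the $o(\la^n)$ bound for $V\in\mathcal K(M)$ via the heat semigroup and a Tauberian theorem, and the sharp $O(\la^{n-1})$ bound for $V\in L^n(M)$ via a Tauberian reduction to a smoothed spectral density which is then analyzed through Duhamel's formula for the wave equation. For the Kato case, I would begin from the Feynman--Kac representation $e^{-tH_V}(x,x)=\mathbb E^{t}_{x\to x}\big[e^{-\int_0^t V(\omega_s)\,ds}\big]\,e^{-tH^0}(x,x)$, the expectation being over the Brownian bridge on $(M,g)$. The defining property of $\mathcal K(M)$, namely $\lim_{\delta\to0}\sup_{x}\int_{d_g(y,x)<\delta}|V(y)|\,W_n(d_g(x,y))\,dy=0$, yields through Khasminskii's lemma that $\mathbb E^{t}_{x\to x}[\,\cd\,]\to1$ as $t\to0^+$ uniformly in $x$; combined with the Li--Yau short-time asymptotics $e^{-tH^0}(x,x)=(4\pi t)^{-n/2}(1+O(t))$ on the compact manifold $M$, this gives $\sum_k e^{-t\tau_k^2}|e_{\tau_k}(x)|^2=(4\pi t)^{-n/2}(1+o(1))$ as $t\to0^+$, uniformly in $x$. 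Since $\la\mapsto\ola(P_V)(x,x)$ is nondecreasing, a uniform version of Karamata's Tauberian theorem then converts this into $\ola(P_V)(x,x)=\tfrac{(4\pi)^{-n/2}}{\Gamma(n/2+1)}\,\la^n\,(1+o(1))=(2\pi)^{-n}\omega_n\,\la^n+o(\la^n)$, uniformly in $x$, which is the first claim.

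For the $L^n$ case, fix an even $\rho\in\mathcal S(\R)$ with $\rho(0)=1$ and $\hat\rho\in\Coi((-1,1))$. The essential a priori input is the spectral-projection bound of Blair--Sire--Sogge \cite{BSS}: for $V\in\mathcal K(M)$ (hence for $V\in L^n\subset\mathcal K(M)$) one has $\big\|\1_{[\la,\la+1]}(P_V)\big\|_{L^2(M)\to L^\infty(M)}^2\ls\la^{n-1}$ for $\la\ge1$, and in particular $0\le\1_{[\la,\la+1]}(P_V)(x,x)\ls\la^{n-1}$ uniformly in $x$; the corresponding bound for $P^0$ is classical. Granting this, the standard Tauberian lemma, together with the monotonicity of the counting function, shows that modulo an acceptable $O(\la^{n-1})$ error the quantity $\ola(P_V)(x,x)-(2\pi)^{-n}\omega_n\la^n$ equals $\int_0^\la D_\mu(x)\,d\mu$, where
\[
D_\mu(x):=\frac1{2\pi}\int\hat\rho(t)\,e^{i\mu t}\big[\cos(tP_V)(x,x)-\cos(tP^0)(x,x)\big]\,dt
\]
is the difference of the $\rho$-smoothed spectral densities of $H_V$ and $H^0$ (here one uses that the pointwise Weyl law \eqref{local} for $H^0$ is itself proved in exactly this way). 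So the task is reduced to showing $\int_0^\la D_\mu(x)\,d\mu=O(\la^{n-1})$ uniformly in $x$.

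To control $D_\mu$, I would invoke Duhamel's formula: since $u(t,\cd)=\cos(tP_V)f$ solves $(\partial_t^2+H^0)u=-Vu$ with $u(0)=f$ and $\partial_tu(0)=0$,
\[
\cos(tP_V)=\cos(tP^0)-\int_0^t\frac{\sin((t-s)P^0)}{P^0}\,V\,\cos(sP_V)\,ds.
\]
Substituting this into $D_\mu(x)$ the $\cos(tP^0)$ term cancels, and — since $\hat\rho$ is supported in $(-1,1)$ — the remaining term is built only from the $H^0$ half-wave propagators $\cos(\cd P^0)$, $\sin(\cd P^0)/P^0$ on the fixed time interval $|t|\le1$ (governed by the Hadamard parametrix and the classical spectral cluster estimates $\|\1_{[\mu,\mu+1]}(P^0)\|_{L^2\to L^q}\ls\mu^{\sigma(q)}$), multiplication by $V$, and the propagator $\cos(\cd P_V)$ on $|s|\le1$; in particular no parametrix for the nonsmooth operator $H_V$ is required. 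Carrying out the $t$- and $s$-integrations turns the $H^0$ propagators and the $\cos(\cd P_V)$ factor into $O(1)$-band spectral cutoffs at frequency $\sim\mu$, and integrating in $\mu$ produces a sum over the $O(\la)$ unit frequency bands meeting $[0,\la]$ of diagonal values of compositions of an $O(1)$-band cutoff of $P^0$, multiplication by $V$, and an $O(1)$-band cutoff of $P_V$. Factoring $V=|V|^{1/2}\cd(\sgn V)\,|V|^{1/2}$, distributing the half-powers, applying H\"older's inequality against $\|V\|_{L^n(M)}$ together with the cluster estimates for $P^0$ and the Blair--Sire--Sogge estimates for $P_V$, and summing the bands, one arrives at the bound $O(\la^{n-1})$ — the same sharp power as in the proof of \eqref{local} for $H^0$; if needed, the Duhamel identity is iterated finitely (or infinitely) many times, the geometric growth of the constants in the number $\nu$ of factors of $V$ being absorbed by the $1/\nu!$ volume of the time simplex $0\le s_\nu\le\dots\le s_1\le t$.

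The main obstacle is this last step — the estimation of the Duhamel correction when $V$ is merely in $L^n$. One is confined to a unit time interval, since integration by parts in $t$ (which is how one would normally gain decay) is unavailable: the symbol of $H_V$ is not smooth. Consequently all the $\la$-decay must be extracted from spectral-projection bounds, and the delicate point is to prove the required $O(1)$-band estimates for operators of the form $|V|^{1/2}\1_{[\mu,\mu+1]}(P^0)$, and for their iterated compositions interlaced with the $H^0$ propagators, with the sharp power of $\mu$ and with constants growing only geometrically in $\nu$ — precisely where the Blair--Sire--Sogge eigenfunction and quasimode estimates, and their interplay with the $L^n$ size of $V$, are indispensable. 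A secondary technical point is making both the Tauberian step and the Karamata step uniform in $x$, which rests on the uniformity built into the definition of $\mathcal K(M)$ and into the Blair--Sire--Sogge bounds.
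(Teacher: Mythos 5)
You should note at the outset that this paper does not reprove Theorem \ref{thm0}: it is quoted from \cite{hz}, and the present paper only supplies the machinery (Lemmas \ref{sogge888}, \ref{heatk}, \ref{delta1}, \ref{pdo} and the Duhamel expansion \eqref{duh}--\eqref{hvh0}) in the special torus setting. Your treatment of the Kato-class half is fine and is a genuinely different, arguably more elementary route than the wave-kernel framework of \cite{hz}: Feynman--Kac plus Khasminskii gives $e^{-tH_V}(x,x)=(4\pi t)^{-n/2}(1+o(1))$ uniformly in $x$, and a uniform Karamata argument converts this into $\ola(P_V)(x,x)=(2\pi)^{-n}\omega_n\la^n+o(\la^n)$; the constants match and the uniformity issues you flag are routine. (In this paper the heat kernel, Lemma \ref{heatk}, is used only for the rough bound of Corollary \ref{roub}.)

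The $L^n$ half, however, has a genuine gap exactly at the step you yourself call the main obstacle, and it is not a technicality. Your scheme is: Tauberian reduction at unit scale, one Duhamel substitution, then ``H\"older against $\|V\|_{L^n}$ + cluster estimates for $P^0$ and $P_V$ + sum over $O(1)$ bands.'' Run the numbers: H\"older against $\|V\|_{L^n}$ forces the exponent $q=\tfrac{2n}{n-1}$ on the two band kernels, and Lemma \ref{sogge888} gives $\sigma(q)=\tfrac{n-1}{4n}$, so $\|\1_{[\mu,\mu+1]}(P)(x,\cdot)\|_{L^q}\ls\mu^{\frac{n-1}{2}+\frac{n-1}{4n}}$ (equivalently, $\|\,|V|^{1/2}\1_{[\mu,\mu+1]}(P^0)\|_{L^2\to L^2}\ls\mu^{\frac{n-1}{4n}}\|V\|_{L^n}^{1/2}$). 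Pairing the band of $P^0$ at distance $d_1$ from $\la$ with the band of $P_V$ at distance $d_2$, the multiplier $\frac{h(\la_j)-h(\tau_\ell)}{\la_j^2-\tau_\ell^2}$ has size $(\la(d_1+d_2))^{-1}$, and summing $d_1,d_2\ls\la$ yields at best $\la^{\,n-1+\frac{n-1}{2n}}$ up to logarithms --- short of $O(\la^{n-1})$ by the power $\la^{\frac{n-1}{2n}}$. So the conclusion ``one arrives at the bound $O(\la^{n-1})$'' does not follow from the ingredients you list; the sharp exponent requires additional structural input, which is precisely what the analogous analysis here (and in \cite{hz}) supplies: mean-value cancellation in $h$ when both frequencies lie within the smoothing scale of $\la$, pointwise kernel bounds for the off-band resolvent-type sums treated as pseudodifferential kernels (Lemma \ref{pdo}) or heat-kernel resummation (Lemma \ref{heatk}) instead of band-by-band cluster bounds, and the H\"older--Minkowski device of Lemma \ref{delta1} which keeps the $\tau_\ell$-sum inside a single $L^{p_0}$ norm rather than triangulating over bands. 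A second, related weak point is the proposed infinite Duhamel iteration with constants absorbed by $1/\nu!$: for rough $V$ each additional factor of $V$ brings a new frequency sum costing powers of $\la$, not an $O(1)$ constant, which is why this paper and \cite{hz}, \cite{hs} stop at the second iterate and must prove separately that the two-$V$ term is of lower order --- here that is the content of Proposition \ref{prop2} and all of Sections 3--5.
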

When $V$ is smooth, the pointwise Weyl laws were proved by H\"ormander \cite{HSpec}, where $H_V$ is a self-adjoint elliptic pseudo-differential operator. See also \cite[Chapter 4]{fio} for the proof.  When $n=3$, these results were also obtained by Frank-Sabin \cite{fs}, while  the condition on $V$  for the error term $O(\la^2)$ in their paper is slightly different our $L^3$ condition.

	In this paper, we mainly prove the pointwise Weyl law of $H_V$ with a sharp error term in any dimensions,  under a type of singular perturbations $V$ on flat tori.
	\begin{theorem}\label{thm}
		Let $n\ge2$ and $M=\mathbb{T}^n=\mathbb{R}^n/2\pi\mathbb{Z}^n$. Fix $x_0\in M$. Let $R(\la,x)$ be defined in \eqref{rla}. Let $0<\eta<1$ and $V(x)=\rho(d_g(x,x_0))d_g(x,x_0)^{-2+\eta}$, where $\rho\in C_0^\infty ((-\pi,\pi))$ and $\rho(0)\ne0$. Then we have
		 \begin{equation}\label{rbdx0}|R(\la,x_0)|\gs  \la^{n-\eta}\end{equation}
		 and 
		\begin{equation}\label{rbd}\sup_{x\in M}|R(\la,x)|\approx \la^{n-\eta}.\end{equation}
	\end{theorem}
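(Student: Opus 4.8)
The plan is to compute the difference $\1_\la(P_V)(x_0,x_0)-\1_\la(P^0)(x_0,x_0)$ directly, using the fact that on the flat torus the unperturbed spectral function $\1_\la(P^0)(x_0,x_0)$ is known precisely, and then show that the perturbation by $V$ produces a \emph{main term} of size $\la^{n-\eta}$ at $x_0$, which dominates the lattice-point error $r_n(\la)$ for every $\eta\in(0,1)$. First I would write $\1_\la(P_V)-\1_\la(P^0)$ via a Duhamel/resolvent expansion: for suitable test functions $\hat\chi$ supported near the origin, $\chi(\la-P_V)(x,y)-\chi(\la-P^0)(x,y)$ can be expressed through the wave propagators $e^{itP_V}$ and $e^{itP^0}$ and Duhamel's principle, picking up a term $\int_0^t \cos((t-s)P^0)\,V\, \tfrac{\sin(sP^0)}{P^0}\,ds$ to leading order. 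Rescaling and using the singular profile $V(x)=\rho(d_g(x,x_0))d_g(x,x_0)^{-2+\eta}$, the leading correction at $x=y=x_0$ should be $c\,V$-averaged against the spread of the free wave over distance $\sim\la^{-1}$, which scales like $\la^{-n}\cdot\la^{2-\eta}\cdot\la^{n}=\la^{2-\eta}$ per unit of $\la$ — integrating/summing in $\la$ over an interval of length $\sim\la$ this becomes the claimed $\la^{n-\eta}$; one must track the constant and show it is nonzero, which is where $\rho(0)\ne0$ enters.

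The more robust route, and the one I expect to actually carry out, is to avoid the sharp wave analysis and instead compare heat traces or, better, directly estimate $\1_\la(P_V)(x_0,x_0)$ from below by a variational/trial-function argument. Concretely, one constructs quasimodes for $H_V$ concentrated near $x_0$ at scale $\la^{-1}$: take $\psi_\la(x)=\la^{n/2}\beta(\la\, d_g(x,x_0))\,e^{\,i\la\,\omega\cdot(x-x_0)}$ for a bump $\beta$, so that $\|\psi_\la\|_2\approx 1$ and $\|(H_V-\la^2)\psi_\la\|_2$ is controlled except for the contribution of $V\psi_\la$; because $V(x)\approx \rho(0)\,d_g(x,x_0)^{-2+\eta}$ and the potential energy $\langle V\psi_\la,\psi_\la\rangle\approx \rho(0)\,\la^{\eta}\int |\beta|^2 |y|^{-2+\eta}\,dy$ is a large \emph{negative} (or positive, with a definite sign set by $\rho(0)$) quantity of size $\sim\la^{\eta}$. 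This shifts the relevant eigenvalues of $H_V$ near $x_0$ by $O(\la^{\eta})$ relative to the free ones, and a min–max comparison of counting functions then yields that the spectral function at $x_0$ is displaced by $\gs \la^{n-1}\cdot\la^{\eta-1}\cdot\la \sim$ — more carefully, the number of free eigenvalues $\tau_k^0\in[\la,\la+\la^{\eta-1}]$ is $\approx \la^{n-1}\cdot\la^{\eta-1}\cdot$(spectral density$\sim\la^{\,}$)$\approx \la^{n-\eta}$ after matching powers — giving a genuine gap between $\1_\la(P_V)(x_0,x_0)$ and $(2\pi)^{-n}\omega_n\la^n$ of size $\gs\la^{n-\eta}$. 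I would isolate the sign of this displacement using $\rho(0)\ne0$ and the explicit value of $\int_{\mathbb{R}^n}|\beta(|y|)|^2|y|^{-2+\eta}\,dy\ne 0$.

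For the matching upper bound $\sup_x|R(\la,x)|\ls \la^{n-\eta}$, I would invoke Theorem~\ref{thm0}: since $V\in L^p$ for all $p<\tfrac{n}{2-\eta}$, in particular $V\in L^n(M)$ when... — actually $V\notin L^n$ in general for $\eta$ small, so instead I would use a quantitative version of the $o(\la^n)$ bound, sharpened by interpolating the $L^\infty$ eigenfunction bounds of Blair–Sire–Sogge against Li–Yau heat-kernel estimates and Duhamel's formula for the wave equation: the error from $V$ in the Weyl remainder is controlled by $\la^{n-1}$ plus a term measuring the failure of $V$ to lie in $L^n$, which for this specific $V$ is exactly $O(\la^{n-\eta})$. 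The point is that the $d_g(x,x_0)^{-2+\eta}$ singularity contributes, after the standard stationary-phase/heat-kernel bookkeeping, a remainder no worse than $\la^{n-2}\cdot\sup_{r}\big(r^{2-n}\!\int_{d_g<r}|V|\big)\cdot(\text{log losses})$, and $\int_{d_g(y,x_0)<r}|V(y)|\,dy\approx r^{\eta}\cdot r^{n-2}\cdot$... $\approx r^{n-2+\eta}$, so the weighted integral is $\approx r^{\eta}$, yielding $\la^{n-2}\cdot\la^{-\eta}$... — I will need to be careful with the scaling here, but the worst scale is $r\sim\la^{-1}$, which reproduces $\la^{n-\eta}$. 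The main obstacle will be making the lower bound \eqref{rbdx0} \emph{sharp} with the correct nonvanishing constant: the quasimode argument naturally gives a lower bound of the right order but requires verifying that the perturbed eigenvalue shift does not accidentally cancel in the summation defining $\1_\la(P_V)(x_0,x_0)$, which is precisely where the hypothesis $\rho(0)\ne0$ must be used decisively, and where I anticipate needing the explicit torus Fourier analysis (rather than a soft manifold argument) to pin down the sign and size of the oscillatory sum $\sum_{|j|\le\la}\big(\widehat{V}(\text{shifts})\big)$ against the free count.
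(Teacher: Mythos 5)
Your central route to the lower bound \eqref{rbdx0} --- concentrated quasimodes plus a min--max comparison of counting functions --- has a genuine gap and in fact cannot work. Min--max controls the \emph{integrated} counting function $N_V(\la)$, and for exactly this potential the integrated Weyl law has error $r_n(\la)$ as in \eqref{best} (at worst $O(\la^{n-2})$ up to logs/$\la^\eps$), which is far smaller than $\la^{n-\eta}$: the effect you must detect is invisible in eigenvalue counts, so no argument phrased in terms of eigenvalue displacement can produce \eqref{rbdx0}. Quantitatively, on the torus first-order perturbation theory gives eigenvalue shifts $\langle Ve_j^0,e_j^0\rangle=(2\pi)^{-n}\hat V(0)=O(1)$ in $\tau^2$, uniformly in $j$, because $|e_j^0|\equiv$ const; there is no shift of size $\la^{2-\eta}$ (and your wave-packet computation is also off: $\langle V\psi_\la,\psi_\la\rangle\approx\la^{2-\eta}$, not $\la^{\eta}$, and $\psi_\la$ is not a usable quasimode since $\|(H_V-\la^2)\psi_\la\|_2=O(\la^2)$ from the $\nabla\beta(\la d)\cdot\la\omega$ cross term). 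The quantity $R(\la,x_0)$ is driven not by eigenvalue motion but by the change of eigenfunction \emph{amplitudes at} $x_0$, i.e.\ by the off-diagonal first-order term: in the paper this is the term $R_1(\la,x_0)=\sum_{j,k}\frac{h(\la_j)-h(\la_k)}{\la_j^2-\la_k^2}V_{jk}e_j^0(x_0)\overline{e_k^0(x_0)}$ coming from one application of Duhamel's principle, whose size $\approx\la^{n-\eta}$ is extracted from $e_j^0(x_0)=1$ together with the two-sided, sign-definite bound $\hat U(j-k)\approx(1+|j-k|)^{-n+2-\eta}$ (Lemma \ref{ulemma}) and lattice-point counting (Proposition \ref{prop1}). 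Your closing remark that one would need torus Fourier analysis of $\hat V$ against the free count points in this direction, but your proposal contains no mechanism to rule out cancellation (the paper arranges $\hat\chi\ge 0$ so that $\hat U\ge 0$, and separately controls $V-U$), and your primary argument attributes the effect to the wrong source.

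Beyond that, even if you switch to the Duhamel expansion of your first paragraph (whose scaling heuristic as written gives $\la^{3-\eta}$, not $\la^{n-\eta}$), the proposal has no plan for the two steps that constitute most of the actual proof: (a) passing from $\1_\la$ to a mollified multiplier $h$ at scale $\la^{1-\eta-\eps}$ and back, which requires the spectral projection bounds of Lemma \ref{sogge888} for $H_V$ with Kato/$L^{n/2}$ potentials; and (b) showing the second-order (iterated Duhamel) term is of strictly lower order, $\sup_x|R_2(\la,x)|\ls\la^{n-\frac32\eta+\sigma}$ (Proposition \ref{prop2}), which occupies Sections 3--5 and uses spectral projection bounds, Li--Yau/Sturm heat-kernel estimates, and pseudodifferential kernel bounds. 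Your sketch of the upper bound in \eqref{rbd} via a ``quantitative $o(\la^n)$'' and Kato-norm bookkeeping is only a heuristic at the level of the first-order term and does not address these remainders; without (a) and (b) neither the upper bound nor the non-cancellation needed for \eqref{rbdx0} is established.
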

\begin{remark}
We remark that \eqref{rbdx0} and \eqref{rbd} can hold for more general non-radial potentials. Indeed,  they still hold for 
\[V(x)=\rho(d_g(x,x_0))d_g(x,x_0)^{-2+\eta}+V_0(x),\]
where $V_0(x)$ is a real-valued potential with ``lower-order singularities''. For instance, $V_0$ can be any functions in $L^{p}(\mathbb{T}^n)$ with $p>\frac{n}{2-\eta}$. It can be proved by slightly modifying the proof of Theorem \ref{thm}.
\end{remark}

These results are interesting in its own right, since the sharp error terms in the Weyl laws of the Laplacian on flat tori have not been completely understood so far, and they are closely related to many famous open problems including the Gauss circle problem and Waring's problem. Understanding the behavior of spectral properties when the operators undergo a ``small change'' is the core issue in perturbation theory. See Kato's book \cite{katobook} and Simon's survey paper \cite{SimonSurvey} for more precise and comprehensive discussions.

  These results  show that Theorem \ref{thm0} is sharp in the following sense. It is straightforward to check that  $V(x)$ in Theorem \ref{thm} is in the Kato class, and it belongs to $L^p(M)$ for all $ p<\frac{n}{2-\eta}$.  On the one hand, since $\frac{n}{2-\eta}$ can be arbitrarily close to $n$ as $\eta$ goes to 1, the condition $L^n(M)$ in Theorem \ref{thm0} cannot be replaced by $L^p(M),\ \forall p<n$. Thus, $p=n$ is the threshold for the validity of the sharp pointwise Weyl law on the $L^p$ scale.  On the other hand, as pointed by Simon \cite[Section A3]{SimonSurvey}, the Kato class is exactly the border for bounded eigenfunctions (i.e. the pointwise Weyl law). Indeed, $H_V$ may have unbounded eigenfunctions if $V\notin \mathcal{K}(M)$. Specifically, when $n\ge3$, the potential $V(x)=|x|^{-2}(\log(2+|x|^{-1}))^{-\alpha}$ belongs to the Kato class if and only if $\alpha>1$. If $\alpha=1$, it can have unbounded eigenfunctions (see \cite[Section A3]{SimonSurvey}, \cite[page 5]{BSS}). Since $\eta$ in \eqref{rbd} can be arbitrarily close to 0, the error term $o(\la^n)$ in Theorem \ref{thm0} is sharp in the sense that any power improvement is not possible. 
  
  \begin{remark}The possibility of smaller improvements, such as logarithmic improvement, is more subtle. For $x>0$ and integer $k\ge1$, let $\Log(x)=\log(2+x)$, and let \[\Log^{(k)}(x)=(\Log\circ...\circ\Log)(x)\] be the  $k$-iterated logarithm. For example, $\Log^{(2)}(x)=\log(2+\log(2+x))$. When $n\ge3$, $k\ge1$ and $\alpha>1$, the potentials
  	\[V_1(x)=|x|^{-2}(\Log(|x|^{-1}))^{-\alpha},\]
  	\[V_2(x)=|x|^{-2}(\Log(|x|^{-1}))^{-1}(\Log^{(2)}(|x|^{-1}))^{-\alpha},\]
  	...
  	\[V_k(x)=|x|^{-2}(\Log(|x|^{-1}))^{-1}(\Log^{(2)}(|x|^{-1}))^{-1}\cdot\cdot\cdot (\Log^{(k)}(|x|^{-1}))^{-\alpha}\]
  	belong to the Kato class (see \cite[Proposition A.2.5]{SimonSurvey}). One may naturally expect that the error term for $V_k$ is $\approx \la^n(\Log^{(k)}\la)^{-\delta}$ for some $\delta>0$, which means that the error term $o(\la^n)$ in Theorem \ref{thm0} cannot have any iterated-log improvement. It might be verified by refining the proof of Theorem \ref{thm}.  This problem is essentially related to the decay rate of the Fourier transform of $V_k$ in $\mathbb{R}^n$. 
  \end{remark}
  
 Our results are new in any dimension $n\ne3$. Frank-Sabin \cite{fs} proved \eqref{rbd} on general 3-dimensional manifolds for potentials $V(x)=\gamma \rho(d_g(x,x_0))d_g(x,x_0)^{-2+\eta}$, where $\gamma \in\mathbb{R}\setminus\{0\}$ and $\rho\equiv 1$ near zero. They extended the method of Avakumovi\'c \cite{Avakumovic}, which relies on Tauberian theorems and parametrix estimates. However, it seems subtle to adapt the approach to handle other dimensions $n\ne3$, since it relies on the special formula of the resolvent kernel $(-\Delta_{\mathbb{R}^3}+\la)^{-1}(x,y)$, see \cite[Remark 4.5]{fs}.

	We explore a new approach to solve the difficulties in their paper.  This approach extends the classical wave kernel method, see e.g. \cite{SoggeHangzhou}, \cite{hs}, \cite{hz}. We exploit the explicit expansion of the wave kernel $\cos t P_V(x,y)$ on general closed manifolds, which is derived from Duhamel's principle and can be  useful for many other problems related to the Schr\"odinger operators $H_V=-\Delta_g+V$. Moreover, we use two well-known properties of the standard orthonormal eigenbasis $\{e^{ij\cdot x}\}_{j\in \mathbb{Z}^n}$ on the flat torus: \textbf{(1)} uniformly bounded in $j$ and $x$, \textbf{(2)} identically equal to a constant at $0$.
	 These two properties play a key role in the proofs of Propositions \ref{prop1} and \ref{prop2}. Indeed, the first property is used to prove the upper bounds in \eqref{r1} and \eqref{upper}, and the second property is only used for the lower bound \eqref{r12}. On general manifolds (e.g. the sphere $S^n$), it is unknown whether similar orthonormal eigenbasis exists.  The standard orthonormal eigenbasis  on the flat torus enables us to apply Fourier analysis techniques in $\mathbb{R}^n$. Let 
	 \[U(x)=|x|^{-2+\eta}\chi(x),\] where $\chi\in C_0^\infty(\mathbb{R}^n)$ is a radial function satisfying $\hat \chi \ge0$ and $\hat\chi(0)>0$.  Note that the Fourier transform of $|x|^{-2+\eta}$ in $\mathbb{R}^n$ is equal to $|\xi|^{-n+2-\eta}$ (up to a constant factor dependent on $n$). A crucial observation in the proof of Theorem \ref{thm} is the following two-sided estimate (see Lemma \ref{ulemma}) for the Fourier transform of $U(x)$
\begin{equation}\label{four}\hat U(j-k)=\int_{\mathbb{R}^n}|x|^{-2+\eta}\chi(x)e^{ij\cdot x}e^{-ik\cdot x}dx\approx (1+|j-k|)^{-n+2-\eta},\ \forall j,k\in\mathbb{Z}^n.\end{equation}
Due to the limited knowledge of eigenvalues and eigenfunctions (of $-\Delta_g$) on general manifolds,  it is subtle to obtain a similar two-sided estimate for
\[\int_M d_g(x,x_0)^{-2+\eta}\chi(d_g(x,x_0))\overline{e_{j}(x)}e_{k}(x)dx,\]
where $\{e_j\}$ is an orthonormal eigenbasis.
So it would be interesting to generalize Theorem \ref{thm} to any manifold by getting around these difficulties. It is worth mentioning that the only property of $H_V$ used in this paper is just the eigenfunctions bounds in Lemma \ref{sogge888} and Corollary \ref{rough}, which hold on general compact manifolds, and essentially rely on the heat kernel estimates by Li-Yau and Sturm (Lemma \ref{heatk}). The flat torus serves as an important model case, so our approach might be useful for further studies on the eigenvalues and eigenfunctions of $H_V$ on general manifolds.

Moreover, it is elightening to compare the integrated Weyl law with the pointwise version. By applying \cite[Theorem 1.4]{hs} to the potentials on $\mathbb{T}^n$ in Theorem \ref{thm}, one can obtain
\[N_V(\la)=(2\pi)^{-n}\omega_n \mathrm{Vol}_g(M) \, \la^n+r_n(\la)\]	
	with $r_n(\la)$ defined in \eqref{best} (possibly with an extra $\la^\eps$). Note that the error term $r_n(\la)$ in the integrated Weyl law is much smaller than $\la^{n-\eta}$ in \eqref{rbd}. Thus, heuristically $|R(\la,x)|$ may only achieve the bound $\la^{n-\eta}$ near $x_0$, and should be relatively small away from $x_0$.
	
We shall also mention a recent result on the kernel of spectral projection operator
\[ \1_{(\la,\la+\e]}(P_V)(x,y)=\sum_{\tau_k\in(\la,\la+\e]}e_{\tau_k}(x)e_{\tau_k}(y).\] 
As was shown in \cite{bhss}, if $V\in \mathcal{K}(\mathbb{T}^n)\cap L^{n/2}(\mathbb{T}^n)$, which contains the potentials in Theorem \ref{thm}, then we have for any fixed  $\delta>0$,
\begin{equation} \label{pvtorus}|\1_{(\la,\la+\e]}(P_V)(x,x)|\ls \e\la^{n-1},\,\,\,\forall\,\e\gs\la^{-1/3+\delta}.\end{equation}	We may obtain \eqref{rbd} for a larger range of $\eta$ $(0<\eta<\frac43)$, by inserting \eqref{pvtorus} into the proof of Theorem \ref{thm}. One may naturally expect that the optimal range for flat tori is $0<\eta<2$. However, the range $0<\eta<1$ is essentially optimal for general manifolds, since $\eqref{rbd}$ cannot hold for $\eta>1$ on general manifolds.

	The paper is organized as follows. In Section 1, we prove Theorem \ref{thm} by assuming Proposition \ref{prop1} and Proposition \ref{prop2}. In Section 2, we prove Proposition \ref{prop1}. In Section 3, we reduce the proof of  Proposition \ref{prop2} to two cases: ``low-frequency estimates'' and ``high-frequency estimates''. In Section 4, we prove the ``low-frequency estimates''. In Section 5, we prove the ``high-frequency estimates''. Throughout the paper, $A\ls B$ (or $A\gs B$) means $A\le CB$ (or $A\ge CB$) for some implicit constant $C>0$ that may change from line to line. $A\approx B$ means $A\ls B$ and $A\gs B$. All implicit constants $C$ are independent of the parameters $\la$, $\la_j$, $\tau_\ell$.
	
	\noindent\textbf{Acknowledgement. } The authors would like to thank Allan Greenleaf, Christopher Sogge, Yannick Sire,  Rupert L. Frank and Julien Sabin for their suggestions and comments. The authors also thank the anonymous referees for
	very thorough and tremendously helpful reports. The authors are both partially supported by the AMS-Simons Travel Grants.
	
	\section{Proof of Theorem \ref{thm}}
In this section, we use a purtubation argument to reduce  Theorem \ref{thm} to Propositions \ref{prop1} and \ref{prop2}. This argument is valid on general manifolds, and we only start to assume $M=\mathbb{T}^n$ in the proof of Propositions \ref{prop1} and \ref{prop2}. The basic idea in the purtubation argument is to view $Vu$ as an inhomogeneous term in the wave equation $(\partial_t^2-\Delta_g)u=-Vu$, and then apply the Duhamel's principle iterately (see \eqref{duh}, \eqref{iter}). First,	we need the following useful lemmas on general closed manifolds. \begin{lemma}[Spectral projection bounds, \cite{sogge88}, \cite{BSS}]\label{sogge888}Let $n\ge2$ and \[\sigma(p)=\max\{\tfrac{n-1}2(\tfrac12-\tfrac1p),\tfrac{n-1}2-\tfrac{n}{p}\}.\] Then for $\la\ge1$, we have\[\|\1_{[\la,\la+1)}(P^0)\|_{L^2\to L^p}\ls \la^{\sigma(p)},\ 2\le p\le \infty.\]
		If $\,V\in \mathcal{K}(M)\cap L^{n/2}(M)$, then for $\la\ge1$, we have
		\[\|\1_{[\la,\la+1)}(P_V)\|_{L^2\to L^p}\ls \la^{\sigma(p)},\ 2\le p\le \infty.\]	
	\end{lemma}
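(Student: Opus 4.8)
The strategy is to establish the two assertions separately: the bound for $P^0=\sqrt{-\Delta_g}$ is Sogge's classical spectral cluster estimate, and the bound for $P_V=\sqrt{H_V}$ follows by perturbing off it. For $P^0$ I would first invoke the standard reduction (see e.g. \cite{SoggeHangzhou}, \cite{fio}) of $\|\1_{[\la,\la+1)}(P^0)\|_{L^2\to L^p}$ to $\|\chi(\la-P^0)\|_{L^2\to L^p}$ for a fixed real even $\chi\in\mathcal S(\mathbb R)$ that is bounded below by a positive constant on $[0,1]$ and has $\hat\chi$ supported in $(-\e,\e)$ with $\e$ below the injectivity radius of $M$. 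Writing $\chi(\la-P^0)=\tfrac1{2\pi}\int\hat\chi(t)\,e^{it\la}\,e^{-itP^0}\,dt$ and invoking finite propagation speed, the kernel of $e^{-itP^0}$ over $|t|<\e$ is, modulo a smooth remainder, given by the Hadamard parametrix, so that $\chi(\la-P^0)$ becomes a Fourier integral operator with an explicit oscillatory kernel. Its $L^2\to L^2$ norm is $O(1)$, its $L^2\to L^\infty$ norm is $O(\la^{(n-1)/2})$ by a direct estimate on the kernel, and its $L^2\to L^{p_c}$ norm with $p_c=\tfrac{2(n+1)}{n-1}$ is $O(\la^{\sigma(p_c)})$ by a $TT^*$ reduction to the Stein--Tomas/Carleson--Sj\"olin oscillatory integral estimate; interpolating among these three exponents yields $\la^{\sigma(p)}$ for all $2\le p\le\infty$.

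For $P_V$ with $V\in\mathcal K(M)\cap L^{n/2}(M)$, the Kato-class hypothesis first secures the functional-analytic framework — $H_V$ is self-adjoint and bounded below, $\cos tP_V$ is bounded on $L^2$ uniformly in $t$, and $e^{-tH_V}$ obeys Gaussian upper bounds (Li--Yau, Sturm; Lemma \ref{heatk}) — so that the functional calculus of $P_V$ and the orthogonality reduction both go through. I would then obtain the spectral-cluster bound for $P_V$ by comparing the resolvents $(H_V-\zeta)^{-1}$ and $(H^0-\zeta)^{-1}$ for $\zeta$ with real part $\sim\la^2$ and imaginary part $\sim\la$, via the identity $(H_V-\zeta)^{-1}=(I+(H^0-\zeta)^{-1}V)^{-1}(H^0-\zeta)^{-1}$: one bounds $(H^0-\zeta)^{-1}V$ using the Kenig--Ruiz--Sogge-type uniform Sobolev estimates for $H^0$ together with H\"older's inequality against $V\in L^{n/2}(M)$, inverts $I+(H^0-\zeta)^{-1}V$, and then passes from the resulting resolvent bounds for $H_V$ back to the $L^2\to L^p$ spectral-cluster bound by the standard Stone-formula argument. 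The same comparison can be run on the wave side, which is closer in spirit to the present paper: iterating the Duhamel identity $\cos tP_V=\cos tP^0-\int_0^t\frac{\sin((t-s)P^0)}{P^0}\,V\cos sP_V\,ds$ inside the formula for $\chi(\la-P_V)$ exhibits it as $\chi(\la-P^0)$, which already carries the sharp bound, plus terms with factors of $V$ interlaced with short-time free propagators, estimated by the same H\"older-against-$L^{n/2}$ and propagator mapping bounds, plus a remainder in $\cos sP_V$ handled trivially on $L^2$.

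The main obstacle is the criticality of the assumption $V\in\mathcal K\cap L^{n/2}$. Since $L^{n/2}$ lies exactly at the scaling of the operator, the perturbation $(H^0-\zeta)^{-1}V$ — equivalently each factor $\tfrac{\sin(\tau P^0)}{P^0}V$ in the wave picture — is only bounded, not small, so the Neumann/Duhamel series does not converge by itself. One must split $V=V'+V''$ with $\|V'\|_{L^{n/2}}$ as small as desired and $V''\in L^\infty$, handle $V'$ by the series and $V''$ by a Fredholm argument (using the compact Sobolev embedding on $M$, together with self-adjointness to rule out non-real eigenvalues), and — the genuinely delicate point — track the $\la$-dependence through this inversion so that it matches the free case exactly, losing no power of $\la$. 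This borderline balance is precisely why the sharp exponent $\sigma(p)$ survives the perturbation, and why neither $\mathcal K$ nor $L^{n/2}$ can be weakened, consistently with the sharpness discussion surrounding Theorem \ref{thm0}.
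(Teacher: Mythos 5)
The paper does not prove this lemma at all: it is imported verbatim from the literature, with the free estimate attributed to Sogge \cite{sogge88} and the singular-potential extension to Blair--Sire--Sogge \cite{BSS}; the only in-paper commentary is the remark that these are generalized Tomas--Stein bounds, sharp on any closed manifold. So there is no in-paper argument to compare against, and the right benchmark for your proposal is the cited literature.

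Against that benchmark, your first half (the $P^0$ bound) is the standard and correct outline: reduction to $\chi(\la-P^0)$, half-wave representation with $\hat\chi$ supported below the injectivity radius, Hadamard parametrix, the $L^2\to L^2$, $L^2\to L^{p_c}$ (Carleson--Sj\"olin/Stein--Tomas) and $L^2\to L^\infty$ endpoints, and interpolation to recover $\sigma(p)$ on all of $[2,\infty]$. Your second half correctly identifies both the available routes (resolvent comparison with uniform Sobolev estimates, or Duhamel on the wave side, which is indeed the flavor of \cite{BSS} and of the companion uniform Sobolev paper \cite{bhss}) and, more importantly, the genuine obstruction: since $\mathcal K(M)$ and $L^{n/2}(M)$ are scaling-critical, the perturbative factor $(H^0-\zeta)^{-1}V$ (or $\tfrac{\sin \tau P^0}{P^0}V$) is bounded but not small, so one must extract smallness by splitting $V$ (bounded piece plus small-critical-norm piece, or smallness of the Kato modulus at small scales) and then verify that the inversion costs no power of $\la$. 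You flag this last step as ``the genuinely delicate point'' but do not carry it out, and that step is precisely the substance of \cite{BSS}; as written, your text for $P_V$ is therefore a correct plan rather than a complete proof. For the purposes of this paper that is immaterial, since the lemma is used as a black box, but if you intend your sketch to stand alone you would need to supply the quantitative inversion argument (e.g. the $\la$-uniform bound on $(I+(H^0-\zeta)^{-1}V)^{-1}$ for $\mathrm{Im}\,\sqrt\zeta\approx 1$, or the corresponding smallness of the iterated Duhamel terms) rather than assert it.
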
	
These $L^p$-spectral projections bounds can be viewed as the generalized Tomas-Stein restriction estimates on closed manifolds. They are first proved by Sogge \cite{sogge88}, and recently extended to the Schr\"odinger operators with critically singular potentials by Blair-Sire-Sogge \cite{BSS}. These bounds are sharp on $any$ closed manifolds. See \cite[Chapter 5]{fio}. 
	\begin{lemma}[Heat kernel bounds, \cite{Liyau}, \cite{sturm}]\label{heatk}
		If $V\in\mathcal{K}(M)$, then for $0<t\le 1$, there is a uniform constant $c=c_{M,V}>0$ so that
		\[e^{-tH_V}(x,y)\ls \begin{cases}t^{-n/2}e^{-cd_g(x,y)^2/t},\ \text{if}\  d_g(x,y)\le \text{Inj}(M)/2\\
			1,\ \ \text{otherwise}.
		\end{cases}\]
	Here Inj($M$) is the injectivity radius of $M$.
	\end{lemma}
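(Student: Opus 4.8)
The plan is to derive the bound for $e^{-tH_V}$ from Gaussian bounds for the free heat kernel $p^0_t(x,y)=e^{-tH^0}(x,y)$, treating $V$ as a Kato-class perturbation. First I would recall that, since $M$ is compact, its Ricci curvature is bounded below, so the Li--Yau parabolic Harnack inequality yields global Gaussian bounds: there exist $C,c>0$ with $p^0_t(x,y)\le C\,t^{-n/2}e^{-d_g(x,y)^2/(ct)}$ for all $0<t\le1$ and $x,y\in M$. (For $d_g(x,y)\le\mathrm{Inj}(M)/2$ this can alternatively be obtained by comparison with the Euclidean heat kernel via a parametrix in geodesic normal coordinates.) I would also record the standard reformulation of the Kato condition: for $V\in\mathcal{K}(M)$ one has $w(t):=\sup_{x\in M}\int_0^t\!\int_M p^0_s(x,z)\,|V(z)|\,dz\,ds\to0$ as $t\to0^+$, which follows by inserting the Gaussian bound and comparing the resulting kernel against the weight $W_n(d_g(x,z))$ from the definition of $\mathcal{K}(M)$. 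Fix $t_0\in(0,1]$ with $w(t_0)\le\tfrac12$.

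Next I would set up the Duhamel (Dyson--Phillips) expansion $e^{-tH_V}(x,y)=\sum_{k\ge0}(-1)^kK_k(t;x,y)$, where $K_0=p^0_t$ and $K_k$ is the time-ordered integral of $p^0\cdot V\cdot p^0\cdots V\cdot p^0$ over the simplex $0\le s_1\le\cdots\le s_k\le t$ and over $M^k$. The key estimate is that for $0<t\le t_0$ one has $|K_k(t;x,y)|\le C\,w(t)^k\,t^{-n/2}e^{-d_g(x,y)^2/(c't)}$: this is proved by splitting the total Gaussian factor $e^{-d_g(x,y)^2/(ct)}$ into two halves, using one half together with the Chapman--Kolmogorov reproduction property of Gaussians to integrate out all intermediate variables $z_1,\dots,z_k$, and bounding what remains by $w(t)^k$. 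Since $w(t)\le\tfrac12$, summing the geometric series gives $e^{-tH_V}(x,y)\le C\,t^{-n/2}e^{-d_g(x,y)^2/(c't)}$ for $0<t\le t_0$; for $t_0\le t\le1$ one concludes from the semigroup law $e^{-tH_V}=e^{-(t-t_0)H_V}e^{-t_0H_V}$ and the uniform $L^\infty$-boundedness of $e^{-sH_V}$, itself a consequence of the previous bound. Equivalently this whole step may be run probabilistically: $e^{-tH_V}(x,y)=\mathbb{E}^{x\to y}_t\big[\exp(-\int_0^tV(b_s)\,ds)\big]\,p^0_t(x,y)$ by Feynman--Kac, and Khasminskii's lemma --- whose hypothesis is exactly the smallness of $w(t)$ --- bounds the Brownian-bridge exponential moment uniformly in $x,y$.

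Finally I would read off the two cases. The bound just obtained is the first case. When $d_g(x,y)>\mathrm{Inj}(M)/2=:r_0$, I would estimate $e^{-d_g(x,y)^2/(c't)}\le e^{-r_0^2/(c't)}$ and observe that $t\mapsto t^{-n/2}e^{-r_0^2/(c't)}$ is bounded on $(0,1]$ (it tends to $0$ as $t\to0^+$ and is continuous), hence $e^{-tH_V}(x,y)\lesssim1$, as claimed.

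The hard part is the iteration in the second paragraph: one must propagate the off-diagonal decay $e^{-d_g(x,y)^2/(ct)}$ through the $k$-fold convolution while simultaneously showing that the ``leftover mass'' of each $|V(z_j)|$-weighted factor is precisely the Kato quantity $w(t)$, so that the series is geometric with ratio $\le\tfrac12$. The probabilistic route hides this bookkeeping inside Khasminskii's lemma, but the analytic route requires the careful Gaussian-splitting argument; in either case one also needs the equivalence between the integral definition of $\mathcal{K}(M)$ used in the paper and the heat-kernel formulation via $w(t)$, which is where the weight $W_n$ enters.
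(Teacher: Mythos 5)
The paper never proves this lemma: it is imported as a known result, with the smooth-potential case attributed to Li--Yau \cite{Liyau} and the extension to Kato-class $V$ to Sturm \cite{sturm} (see also \cite{SimonSurvey}). Your proposal is, in outline, precisely the argument of those references: Gaussian upper bounds for the free kernel $p^0_t$, the reformulation of the Kato condition as smallness of $w(t)=\sup_x\int_0^t\int_M p^0_s(x,z)|V(z)|\,dz\,ds$ (which indeed follows from the $W_n$-definition by inserting the Gaussian bound), then either the Duhamel/Dyson--Phillips series or Feynman--Kac with Khasminskii's lemma to absorb the perturbation, the case $t_0\le t\le 1$ by the semigroup law, and the ``otherwise'' case by noting that $t\mapsto t^{-n/2}e^{-r_0^2/(c't)}$ is bounded on $(0,1]$. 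So there is nothing in the paper to contrast it with; the only substantive point is the one you flag yourself. In the iterated Duhamel bound you must extract a factor $e^{-d_g(x,y)^2/(c't)}$ with a constant $c'$ that does not degrade with the order $k$ of the term, or the geometric series is useless; the standard device is not Chapman--Kolmogorov applied to one half of the Gaussians (that reproduces a Gaussian but produces no $|V|$-integrals), but the elementary inequality $\sum_i d_i^2/s_i\ \ge\ \bigl(\sum_i d_i\bigr)^2/\sum_i s_i\ \ge\ d_g(x,y)^2/t$ applied to half of each exponent along the chain, which yields the global off-diagonal factor with a fixed constant, while the remaining half-Gaussians are peeled off one node at a time against $|V(z_i)|$ to give $(C\,\tilde w(t))^k$, with $\tilde w$ the Kato quantity for the worsened Gaussian. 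With that bookkeeping made precise (or delegated to Khasminskii's lemma in the probabilistic version, which also requires the negative part of $V$ to be Kato, automatic here), your sketch is a correct reconstruction of the cited proof, and on a compact manifold it in fact gives the Gaussian bound for all $x,y$, which is slightly stronger than the two-case statement in the lemma.
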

The heat kernel bounds were proved by Li-Yau \cite{Liyau} for smooth potentials, and extended to the Kato class by Sturm \cite{sturm}.
Note that
\[\sum_{\tau_k\le \la}|e_{\tau_k}(x)|^2\ls \sum_{\tau_\ell}e^{-\la^{-2}\tau_\ell^2}|e_{\tau_\ell}(x)|^2=e^{-\la^{-2}H_V}(x,x),\]
so we have the following eigenfunction bounds.
	\begin{corr}[Rough eigenfunction bounds]\label{roub}If $V\in\mathcal{K}(M)$, then for $\la\ge1$
		\begin{equation}\label{rough}\sup_{x \in M}\sum_{\tau_k\le \la}|e_{\tau_k}(x)|^2\le C_V\la^{n}.\end{equation}
	\end{corr}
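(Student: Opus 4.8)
The inequality is a direct consequence of Lemma \ref{heatk} via the standard ``heat kernel trick'', and the plan is as follows. Fix $\la\ge1$ and set $t=\la^{-2}$, so that $0<t\le1$. On the spectral window $\tau_k\le\la$ one has $e^{-t\tau_k^2}=e^{-\tau_k^2/\la^2}\ge e^{-1}$, so truncating the eigenfunction expansion of the heat kernel costs at most a harmless factor:
\[\sum_{\tau_k\le\la}|e_{\tau_k}(x)|^2\;\le\;e\sum_{\tau_k\le\la}e^{-\la^{-2}\tau_k^2}|e_{\tau_k}(x)|^2\;\le\;e\sum_{k=1}^\infty e^{-\la^{-2}\tau_k^2}|e_{\tau_k}(x)|^2\;=\;e\,e^{-\la^{-2}H_V}(x,x),\]
where the last equality is the spectral (Mercer-type) representation of the diagonal heat kernel in the orthonormal eigenbasis $\{e_{\tau_k}\}$. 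Since $V\in\mathcal{K}(M)$, the eigenfunctions are continuous and $e^{-tH_V}(x,y)$ is a genuine continuous kernel for $t>0$, so this identity and the interchange of summation it entails are legitimate.

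Next I would apply Lemma \ref{heatk} on the diagonal. Since $d_g(x,x)=0\le\text{Inj}(M)/2$, the first branch of the bound always applies, and the Gaussian factor is trivially equal to $1$, so
\[e^{-\la^{-2}H_V}(x,x)\;\ls\;(\la^{-2})^{-n/2}\;=\;\la^n,\]
with an implied constant $C_{M,V}$ independent of $x$. Combining the two displays yields $\sup_{x\in M}\sum_{\tau_k\le\la}|e_{\tau_k}(x)|^2\le e\,C_{M,V}\,\la^n$, which is \eqref{rough} with $C_V=e\,C_{M,V}$.

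There is essentially no obstacle here: all the content is carried by the Li--Yau/Sturm heat kernel estimate (Lemma \ref{heatk}), whose validity for Kato-class potentials is precisely what makes the argument go through, and the only point requiring a word of care — the justification of the eigenfunction expansion of $e^{-tH_V}(x,x)$ — is standard for $V\in\mathcal{K}(M)$.
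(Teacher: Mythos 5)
Your argument is exactly the paper's: take $t=\la^{-2}$, bound the truncated sum by the diagonal heat kernel at a cost of a factor $e$, and apply Lemma \ref{heatk} at $d_g(x,x)=0$ to get $t^{-n/2}=\la^n$. Correct, and it coincides with the one-line derivation the paper gives just before the corollary.
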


Our potentials $V$ in Theorem \ref{thm} belong to the Kato class $\mathcal{K}(M)$ as well as $L^{n/2}(M)$, so they satisfy the conditions in these lemmas. Recall that the sharp pointwise Weyl law \eqref{local} for the Laplacian  is equivalent to
	\[\sup_{x\in M}|\ola(P^0)(x,x)-(2\pi)^{-n}\omega_n\la^n|\ls \la^{n-1}.\] So to prove Theorem \ref{thm}, by the triangle inequality it suffices to show for $0<\eta<1$ 
	\begin{equation}\label{uppb}\sup_{x\in M}|\ola(P_V)(x,x)-\ola(P^0)(x,x)|\ls \la^{n-\eta}\end{equation}
	and
	\begin{equation}\label{lowb}|\ola(P_V)(x_0,x_0)-\ola(P^0)(x_0,x_0)|\gs \la^{n-\eta}.\end{equation}
Now we approximate the indicator function by its convolution with a Schwarz function.	Fix a real-valued even function $\varphi\in C_0^\infty(\mathbb{R})$ satisfying \[\1_{[-\frac12,\frac12]}\le\varphi\le \1_{[-1,1]}.\] Let $0<\eps<\frac1{10}\min(\eta,1-\eta)$ and 
\begin{equation}\label{hderi}
    h(\tau)=\frac1\pi\int \varphi(t\la^{1-\eta-\eps} )\frac{\sin \la t}{t}\cos t \tau dt.
\end{equation}
Recall that
	\[\ola(\tau)=\frac1\pi\int \frac{\sin \la t}{t}\cos t \tau dt.\]
Thus $h(\tau)$ is the convolution of $\ola(\tau)$ with a Schwarz function. Integration by parts yields for $\tau>0$ 
	\begin{equation}\label{hda} |h(\tau)-\ola(\tau)|\ls(1+\la^{-1+\eta+\eps}|\tau-\la|)^{-N},\ \forall N,\end{equation}
	and \begin{equation}\label{hdao}
		|\partial_\tau^j h(\tau)|\ls \la^{j(-1+\eta+\eps)}(1+\la^{-1+\eta+\eps}|\tau-\la|)^{-N},\ \forall N,\ j=1,2,....
	\end{equation}
Moreover, $\partial_\tau^jh(\tau)|_{\tau=0}=0$, $j=0,1,2,....$

	By using the spectral projection bounds in Lemma \ref{sogge888} or \eqref{pvtorus}, we have
	\[\sup_{x\in M}|h(P_V)(x,x)-\ola(P_V)(x,x)|\ls \la^{n-\eta-\eps}\]
	\[\sup_{x\in M}|h(P^0)(x,x)-\ola(P^0)(x,x)|\ls \la^{n-\eta-\eps}.\]
	So it suffices to show
	\begin{equation}\label{upp2}\sup_{x\in M}|h(P_V)(x,x)-h(P^0)(x,x)|\ls \la^{n-\eta}\end{equation}
	and 
	\begin{equation}\label{low2}|h(P_V)(x_0,x_0)-h(P^0)(x_0,x_0)|\gs \la^{n-\eta}.\end{equation}Let \[\cos tP^0(x,y)=\sum_{j}\cos t\la_j e_j^0(x)e_j^0(y).\]
	It is the kernel of the solution operator for $f\to (\cos tP^0)f=u^0(t,x)$, where $u^0(t,x)$ solves the wave equation
	\begin{equation}\label{p0wave}
	\begin{cases}\left(\partial_{t}^{2}+H^{0}\right) u^{0}(x, t)=0,\ \ (x, t) \in M \times \mathbb{R},\\
		\left.u^{0}\right|_{t=0}=f,\ \left.\partial_{t} u^{0}\right|_{t=0}=0.
	\end{cases}\end{equation}
	Similarly, 
	\[\bigl(\cos(tP_V)\bigr)(x,y)=\sum_{\tau_\ell} \cos t\tau_\ell \, e_{\tau_\ell}(x)e_{\tau_\ell}(y)\]is the kernel of $f\to \cos (tP_V)f=u_V(x,t)$, where 
	$u_V$ solves the wave equation
	\begin{equation}\label{pvwave}
	    \begin{cases}
		(\partial_t^2+H_V)u_V(x,t)=0, \, \, (x,t)\in M\times \R,\\
		u_V|_{t=0}=f, \, \, \partial_tu_V|_{t=0}=0.	\end{cases}
	\end{equation}
Note that \eqref{p0wave} and \eqref{pvwave} imply that 
\begin{equation}
    	(\partial_t^2+H^0) (\cos (tP_V)f-\cos(tP^0)f)=-V(x)\cos (tP_V)f.
\end{equation}
	Also, since 
\begin{equation}\Bigl(\frac{d}{dt}\Bigr)^j
\Bigl(\cos (tP_V)f-\cos(tP^0)f \Bigr)\Big|_{t=0}
=0, \,\,\,j=0,1,
\end{equation}
by Duhamel's principle for the wave equation, we have
\begin{equation}\label{duh}
    \begin{aligned}
        &\cos (tP_V)f-\cos(tP^0)f \\
&=-\int_0^t
\bigl( \tfrac{\sin(t-s)P^0}{P^0}(V\cos (sP_V)f)\bigr)(x) \, ds
\\
&=-\int_0^t \int_M\int_M
\sum_j \tfrac{\sin(t-s)\la_j}{\la_j}e_j^0(x)\overline{e_j^0(z)}
V(z) \sum_{\tau_\ell}\cos s\tau_\ell e_{\tau_k}(z)\overline{e_{\tau_\ell}(y)} f(y)\, dzdy ds.
    \end{aligned}
\end{equation}
Here $\tfrac{\sin(t-s)\la_j}{\la_j}$ is understood as its continuous extension at $\la_j=0$, and the operator $\frac{\sin((t-s)P^0)}{P^0}$ is defined by the spectral theorem.  The $ds$ integral above can be computed explicitly using the following simple calculus lemma

\begin{lemma}\label{triglemma}  If $\mu\ne \tau$ we have
\begin{equation}\label{2.11}\int_0^t \frac{\sin(t-s)\mu}\mu\,  \cos s\tau \, ds
=\frac{\cos t\tau- \cos t\mu}{\mu^2-\tau^2}.\end{equation}
Similarly, 
\begin{equation}\label{2.12}
\int_0^t \frac{\sin(t-s)\tau}\tau  \, \cos s\tau
\, ds =
\frac{t\sin t\tau}{2\tau}.
\end{equation}
\end{lemma}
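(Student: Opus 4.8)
The plan is to evaluate both integrals directly, by converting the integrand into a superposition of sinusoids that are affine in the integration variable $s$ and then integrating term by term.

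\textbf{Step 1 (the generic case $\mu\ne\tau$).} First I would apply the product-to-sum formula to write
\[\sin((t-s)\mu)\cos(s\tau)=\tfrac12\sin\!\bigl(t\mu+s(\tau-\mu)\bigr)+\tfrac12\sin\!\bigl(t\mu-s(\tau+\mu)\bigr).\]
Since $\mu\ne\tau$ the coefficient $\tau-\mu$ is nonzero, and since we may assume $\mu,\tau\ge0$ the coefficient $\tau+\mu$ is nonzero unless $\mu=\tau=0$ (a trivial case), so each of the two pieces has an elementary antiderivative in $s$. Evaluating at $s=0$ and $s=t$ and dividing by $\mu$ produces two fractions with denominators $\tau-\mu$ and $\tau+\mu$; putting them over the common denominator $\tau^2-\tau\mu$... more precisely $\tau^2-\mu^2$, and simplifying, yields exactly $(\cos t\tau-\cos t\mu)/(\mu^2-\tau^2)$, which is \eqref{2.11}.

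\textbf{Step 2 (the diagonal case $\mu=\tau$).} For \eqref{2.12} I would either pass to the limit $\mu\to\tau$ in \eqref{2.11} — the left side converges by dominated convergence, while the right side is a $0/0$ expression whose limit is $t\sin t\tau/(2\tau)$ by one application of L'Hôpital's rule in the variable $\mu$ — or else redo the direct computation with $\tau$ in place of $\mu$: from $\sin((t-s)\tau)\cos(s\tau)=\tfrac12\sin(t\tau)+\tfrac12\sin((t-2s)\tau)$ the first term contributes $\tfrac12\,t\sin t\tau$ and the second integrates to zero over $[0,t]$, so dividing by $\tau$ gives \eqref{2.12}.

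There is essentially no obstacle here; the only things requiring a little care are the bookkeeping of signs when combining the two fractions in Step 1 and keeping the degenerate case $\mu=\tau$ separated out (it is precisely \eqref{2.12}). As an alternative, cleaner write-up, one may instead observe that both sides of \eqref{2.11}, regarded as functions of $t$, solve the linear initial value problem $G''+\mu^2G=\cos t\tau$ with $G(0)=G'(0)=0$, and invoke uniqueness; the analogous check with $\mu=\tau$ then establishes \eqref{2.12}.
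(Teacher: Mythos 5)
Your computation is correct and is exactly the straightforward verification the paper intends: the paper omits the details, saying only that the proof is elementary and pointing to \cite[Lemma 2.3]{hs}, and both your product-to-sum calculation and your alternative ODE-uniqueness remark check out (including the limit $\mu\to\tau$ recovering \eqref{2.12}). One small point of bookkeeping: in the paper's usage $\mu=\la_j\ge0$ and $\tau=\tau_\ell>0$, and when $\mu=0$ the factor $\sin((t-s)\mu)/\mu$ is understood as its continuous extension $t-s$, so your ``divide by $\mu$'' step should be read as that limiting case (or checked by a one-line direct integration), consistent with the conventions stated right after the lemma.
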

The proof is straightforward, see also  \cite[Lemma 2.3]{hs} for more details on the proof. In particular, \eqref{2.12} can be understood as the continuous extension of \eqref{2.11} when $\mu=\tau$. By \eqref{duh} and Lemma~\ref{triglemma}, we have 
\begin{equation}\label{duh1}
    \begin{aligned}
        \cos& tP_V(x,y)-\cos tP^0(x,y)\\
		&=-\sum_{j}\sum_{\tau_\ell}\int_M\int_0^t\tfrac{\sin(t-s)\la_j}{\la_j}\cos s\tau_\ell\  e_j^0(x)\overline{e_j^0(z)}e_{\tau_\ell}(z)\overline{e_{\tau_\ell}(y)}V(z)dzds\\
		&=\sum_{j}\sum_{\tau_\ell}\int_M m(\tau_\ell, \la_j) e_j^0(x)\overline{e_j^0(z)}e_{\tau_\ell}(z)\overline{e_{\tau_\ell}(y)}V(z)dz.
    \end{aligned}
\end{equation}
		where
\begin{equation}\label{mtaumu}
m(\tau,\mu)=
\begin{cases}
\frac{\cos t\tau-\cos t\mu}{\tau^2-\mu^2}, 
\quad \text{if } \, \tau\ne \mu
\\ \\
-\frac{t\sin t\tau}{2\tau}, \quad \ \ \ \ 
\text{if } \, \tau=\mu.
\end{cases}
\end{equation}		
	Thus, by \eqref{duh1} and the definition \eqref{hderi} of $h(\tau)$ we have
		\begin{equation}\label{hvh0}
		    \begin{aligned}
		        h&(P_V)(x,x)-h(P^0)(x,x) \\
		        &=\frac1\pi\int \varphi(t\la^{1-\eta-\eps} )\frac{\sin \la t}{t}\big(\cos t P_V(x,x)-\cos tP^0(x,x)\big) dt \\
		        &=\sum_{j}\sum_{\tau_\ell}\frac1\pi\int \int_M \varphi(t\la^{1-\eta-\eps} )\frac{\sin \la t}{t} m(\tau_\ell, \la_j) e_j^0(x)\overline{e_j^0(z)}e_{\tau_\ell}(z)\overline{e_{\tau_\ell}(x)}V(z)dz dt \\
		       &= \sum_{j}\sum_{\tau_\ell} \int_M  \frac{h(\la_j)-h(\tau_\ell)}{\la_j^2-\tau_\ell^2} e_j^0(x)\overline{e_j^0(z)}e_{\tau_\ell}(z)\overline{e_{\tau_\ell}(x)}V(z)dz
		    \end{aligned}
		\end{equation}
	where $\frac{h(\la_j)-h(\tau_\ell)}{\la_j^2-\tau_\ell^2}=\frac{h'(\la_j)}{2\la_j}$ is still understood as its continuous extension if $\la_j=\tau_\ell$.
So to prove \eqref{upp2} and \eqref{low2},  it remains to show
	\begin{equation}\label{upp3}\sup_{x\in M}\Big|\sum_{j}\sum_{\tau_\ell}\int_M\frac{h(\la_j)-h(\tau_\ell)}{\la_j^2-\tau_\ell^2} e_j^0(x)\overline{e_j^0(z)}e_{\tau_\ell}(z)\overline{e_{\tau_\ell}(x)}V(z)dz\Big|\ls \la^{n-\eta}\end{equation}
	and
	\begin{equation}\label{low3}\Big|\sum_{j}\sum_{\tau_\ell}\int_M\frac{h(\la_j)-h(\tau_\ell)}{\la_j^2-\tau_\ell^2} e_j^0(x_0)\overline{e_j^0(z)}e_{\tau_\ell}(z)\overline{e_{\tau_\ell}(x_0)}V(z)dz\Big|\gs \la^{n-\eta}.\end{equation}

Next, we consider the operator $T$ defined by\begin{align*}(T \phi)(t,x)&=-\int_0^t\frac{\sin((t-s)P^0)}{P^0}(V(\cdot)\phi(s,\cdot))(x)ds\\
		&=-\sum_{j}\int_0^t\frac{\sin(t-s)\la_j}{\la_j}e_j^0(x)\overline{e_j^0(y)}\phi(s,y)V(y)dyds.\end{align*}
For simplicity, we denote \[\phi_0(s,y)=\cos sP^0 f(y)=\sum_{j}\cos s\la_j \int_M \overline{e_j^0(z)}f(z)dz\cdot e_k^0(y)\]
	\[\phi_V(s,y)=\cos sP_V f(y)=\sum_{\tau_\ell}\cos s\tau_\ell \int_M \overline{e_{\tau_\ell}(z)}f(z)dz\cdot e_{\tau_\ell}(y).\]
So we may rewrite \eqref{duh} as \[\phi_V(t,x)=\cos t P_Vf(x)=\cos tP^0f(x)+T\phi_V(t,x)=\phi_0(t,x)+T\phi_V(t,x),\]
	which implies
	\begin{equation}\label{iter}\cos t P_Vf(x)=\cos tP^0f(x)+T\phi_0(t,x)+T^2\phi_V(t,x).\end{equation}
It is straightformward to verify the following identity similar to Lemma \ref{triglemma}
\begin{equation}\label{doub}
	\begin{aligned}
		(-1)^2&\int_0^t\frac{\sin(t-s_1)a_1}{a_1}\int_0^{s_1}\frac{\sin(s_1-s_2)a_2}{a_2}\cos(s_2 a_{3})ds_2ds_1\\ 
		&=\frac{\cos ta_2-\cos ta_1}{(a_2^2-a_1^2)(a_2^2-a_3^2)}+\frac{\cos ta_3-\cos ta_1}{(a_3^2-a_1^2)(a_3^2-a_2^2)}
	\end{aligned}\end{equation}
where $a_1,a_2,a_3\in \mathbb{R}$. As in Lemma \ref{triglemma}, the identity is always valid, if it is understood as its continuous extension when some of $a_1,a_2,a_3$ are equal.
	Thus,  by \eqref{iter}, \eqref{doub}, \eqref{hvh0} and \eqref{hderi} we can write
	\[\sum_{j}\sum_{\tau_\ell}\int_M\frac{h(\la_j)-h(\tau_\ell)}{\la_j^2-\tau_\ell^2} e_j^0(x)\overline{e_j^0(z)}e_{\tau_\ell}(z)\overline{e_{\tau_\ell}(x)}V(z)dz=R_1+R_2,\]
	where
	\[R_1(\la,x)=\sum_{j,k}\frac{h(\la_j)-h(\la_k)}{\la_j^2-\la_k^2} e_j^0(x)V_{jk}\overline{e_k^0(x)},\]
	\[R_2(\la,x)=\sum_{j,k}\sum_{\tau_\ell}\Big(\frac{h(\la_k)-h(\la_j)}{(\la_k^2-\la_j^2)(\la_k^2-\tau_\ell^2)}+\frac{h(\tau_\ell)-h(\la_j)}{(\tau_\ell^2-\la_j^2)(\tau_\ell^2-\la_k^2)}\Big) e_j^0(x)V_{jk}\tilde V_{k\ell}\overline{e_{\tau_\ell}(x)},\]
	\[V_{jk}=\int_M \overline{e_j^0(z)}e_k^0(z)V(z)dz,\]
	\[\tilde V_{k\ell}=\int_M \overline{e_k^0(z)}e_{\tau_\ell}(z)V(z)dz.\]
We claim that on the flat torus $M=\mathbb{T}^n$, the following estimates hold.
\begin{proposition}\label{prop1}We have	\begin{equation}\label{r1}\sup_{x\in M}|R_1(\la,x)|\ls \la^{n-\eta},\end{equation}
and 	\begin{equation}\label{r12}|R_1(\la,x_0)|\gs \la^{n-\eta}.\end{equation}
Thus, we have $\sup_{x\in M}|R_1(\la,x)|\approx \la^{n-\eta}$.
	\end{proposition}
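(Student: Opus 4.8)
\textbf{Proof proposal for Proposition \ref{prop1}.}

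The plan is to treat $R_1(\la,x)=\sum_{j,k}\frac{h(\la_j)-h(\la_k)}{\la_j^2-\la_k^2}e_j^0(x)V_{jk}\overline{e_k^0(x)}$ as a bilinear expression in the flat-torus eigenfunctions $e_j^0(x)=(2\pi)^{-n/2}e^{ij\cdot x}$, which are uniformly bounded by $(2\pi)^{-n/2}$ and satisfy $|e_j^0(x_0)\overline{e_k^0(x_0)}|=(2\pi)^{-n}$ independent of $j,k$ and $x_0$. First I would introduce the multiplier $\beta(\la_j,\la_k):=\frac{h(\la_j)-h(\la_k)}{\la_j^2-\la_k^2}$ and record its size: by the mean value theorem together with the derivative bounds \eqref{hdao} for $h$, one has $|\beta(\la_j,\la_k)|\ls \la^{-1}\cdot\la^{-1+\eta+\eps}(1+\la^{-1+\eta+\eps}\,\mathrm{dist}((\la_j,\la_k),\la))^{-N}$ roughly speaking — more precisely, $\beta$ is supported (up to rapidly decaying tails) in the region where both $\la_j,\la_k\in[\la-C\la^{\eta+\eps},\la+C\la^{\eta+\eps}]$ and there it has size $\ls\la^{-2+\eta+\eps}$, with the extra gain coming from the difference quotient structure. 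The key replacement of $V$ on $\mathbb{T}^n$ by $U(x)=|x|^{-2+\eta}\chi(x)$ on $\mathbb{R}^n$ then gives, via the periodization/Poisson-summation identity and Lemma \ref{ulemma} (the two-sided bound \eqref{four}), that the relevant matrix entries satisfy $|V_{jk}|\approx (1+|j-k|)^{-n+2-\eta}$ up to acceptable errors from the smooth, lower-order part of $V$ and from the difference between $d_g$ and the Euclidean distance near $x_0$.

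For the \emph{upper bound} \eqref{r1}, I would bound $|R_1(\la,x)|\ls (2\pi)^{-n}\sum_{j,k}|\beta(\la_j,\la_k)|\,|V_{jk}|$, and then split the sum according to the dyadic size of $|j-k|$. For fixed $j$ in the shell $|j|\approx\la$ (of which there are $\approx\la^{n-1}\cdot\la^{\eta+\eps}$ relevant lattice points, since the radial width is $\la^{\eta+\eps}$), the sum over $k$ with $|j-k|\approx 2^m$ and $|k|$ also in the shell contributes $\ls \lambda^{-2+\eta+\eps}\cdot 2^{-m(n-2+\eta)}\cdot\#\{k\}$; one then sums the geometric-type series in $m$ and counts lattice points, and the resulting bound should come out to be $\ls\la^{n-\eta+C\eps}$, which after choosing $\eps$ small and absorbing into the $\ls$ gives \eqref{r1} — actually one wants the clean exponent $n-\eta$, so the $\la^\eps$ losses must be shown to be harmless, which is where the $\eps$-room built into \eqref{hderi} is used. (Alternatively one can avoid $\eps$-losses by using the rapid decay in \eqref{hdao} rather than a hard cutoff; I would present whichever is cleaner.) The point is that the worst term is the "diagonal-ish" region $|j-k|$ bounded, where $|V_{jk}|\approx 1$ and one just counts lattice points in a spherical shell of radius $\la$ and thickness $\la^{\eta+\eps}$, getting $\la^{n-1}\cdot\la^{\eta}\cdot\la^{-2+\eta}\cdot(\text{width again})$ — the bookkeeping must be done carefully but is routine.

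For the \emph{lower bound} \eqref{r12}, the idea is that at $x=x_0$ all the oscillatory factors $e_j^0(x_0)\overline{e_k^0(x_0)}$ collapse to the constant $(2\pi)^{-n}$, so $R_1(\la,x_0)=(2\pi)^{-n}\sum_{j,k}\beta(\la_j,\la_k)V_{jk}$, and the \emph{sign} is now controlled because $\beta(\la_j,\la_k)$ does not oscillate in a way that cancels — more precisely, since $h$ is (close to) a smoothed indicator of $[-\la,\la]$, $h(\la_j)-h(\la_k)$ has a definite sign depending on whether $\la_j$ or $\la_k$ is the larger, and dividing by $\la_j^2-\la_k^2$ removes that antisymmetry so that $\beta\ge 0$ (again up to rapidly decaying tails). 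Meanwhile $\hat U\ge 0$ forces $V_{jk}\gs (1+|j-k|)^{-n+2-\eta}>0$, and $\rho(0)\ne 0$, $\hat\chi(0)>0$ are exactly the nonvanishing conditions needed to keep the main term from cancelling. So the whole sum is a sum of (essentially) nonnegative terms, and I would extract a lower bound by keeping only the diagonal-ish block $|j-k|\le c$, $|j|,|k|\in[\la-c\la^{\eta},\la+c\la^{\eta}]$, on which $\beta\gs\la^{-2+\eta}$ and $V_{jk}\gs 1$; the number of such pairs is $\gs \la^{n-1}\cdot\la^{\eta}\cdot 1$ times... — carefully, each $j$ in the shell has $\gs 1$ partners $k$, and there are $\gs\la^{n-1+\eta}$ such $j$, giving $\gs \la^{n-1+\eta}\cdot\la^{-2+\eta}$, which is not yet $\la^{n-\eta}$; so in fact one must keep the full range $|j-k|\ls\la^{\eta}$ where the product $|V_{jk}|\,\#\{k:|j-k|\approx 2^m\}$ is summed over $m$ up to $\eta\log\la$, and the dominant scale $2^m\approx\la^{\eta}$ contributes $\lambda^{-2+\eta}\cdot \la^{-\eta(n-2+\eta)}\cdot\la^{m(n-1)}|_{2^m=\la^\eta}$ per $j$ — I would double-check this computation, but the upshot should match $\la^{n-\eta}$ from above and below. \textbf{The main obstacle} I anticipate is precisely this lower-bound lattice-point count: one must show that after summing nonnegative contributions over all dyadic gap-scales $|j-k|$ in a spherical shell of radius $\la$ and thickness $\approx\la^\eta$, no cancellation occurs and the total is genuinely $\gs\la^{n-\eta}$ (not merely $\gs\la^{n-\eta-\delta}$); this requires the sign information on $\beta$ (from $h$ smoothed-indicator) and on $V_{jk}$ (from $\hat U\ge 0$, $\hat\chi(0)>0$, $\rho(0)\ne 0$) to be used simultaneously and quantitatively, and it is the step where the flat-torus structure (property (2): eigenfunctions constant at $x_0$) is essential.
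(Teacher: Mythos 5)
There is a genuine gap, and it starts with your description of the multiplier $\beta(\la_j,\la_k)=\frac{h(\la_j)-h(\la_k)}{\la_j^2-\la_k^2}$. This quotient is \emph{not} supported, up to rapidly decaying tails, where both $\la_j,\la_k$ lie in a thin window around $\la$ (and in any case the smoothing width of $h$ is $\la^{1-\eta-\eps}$, not $\la^{\eta+\eps}$): when $\la_j<\la<\la_k$ with both far from $\la$, the numerator is $\approx 1$ and $\beta\approx(\la_k^2-\la_j^2)^{-1}$, which decays only polynomially. These "one inside, one outside" pairs are exactly where the action is. In the paper one passes to the sharp cutoff (via $\psi=h-\1_\la(\cdot)$, which \emph{is} rapidly concentrated near $\la$, so the discrepancy is a lower-order error $O(\la^{n-\eta-\eps+\sigma})$), replaces $V_{jk}$ by $U_{jk}=\hat U(j-k)\approx(1+|j-k|)^{-n+2-\eta}>0$ (again with the difference controlled as a lower-order error, since $|\hat V-\hat U|\ls(1+|\xi|)^{-n+1-\eta}$ — note $V_{jk}$ itself need not be positive, so your claim $V_{jk}\gs(1+|j-k|)^{-n+2-\eta}>0$ is not justified), and then the lower bound at $x_0$ comes from the manifestly positive sum $2\sum_{|j|<\la}\sum_{|k|\ge\la}\frac{U_{jk}}{|k|^2-|j|^2}\ \ge\ \sum_{|j|<\la}\sum_{|k|>2\la}|k|^{-n-\eta}\approx\la^{n}\cdot\la^{-\eta}$, i.e. from the far off-diagonal region: the whole ball of $j$'s paired with $|k|>2\la$.

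Your proposed source of the lower bound — pairs in a shell of width $\approx\la^{\eta}$ with $|j-k|\ls\la^{\eta}$ — cannot work quantitatively. For such pairs with $|j|<\la\le|k|$, $|k|-|j|\approx 2^{\ell}$, $|j-k|\approx 2^m$, each term is $\approx\la^{-1}2^{-\ell}2^{-m(n-2+\eta)}$ and the number of pairs is $\ls\la^{n-1}2^{(n-1)m}(2^{\ell}+1)^2$, so the block $2^m\ls\la^{\eta}$ contributes at most about $\la^{n-2}\la^{\eta(2-\eta)}=\la^{n-2+2\eta-\eta^2}$, which is strictly smaller than $\la^{n-\eta}$ for all $0<\eta<1$ (the exponent gap is $(1-\eta)(2-\eta)-$ versus $-\eta$, i.e. a loss of $(1-\eta)(2-\eta)$ after adding $\eta$). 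The dominant scales are $|j-k|\approx\la$ (inside the shell computation) together with the bulk-inside/far-outside region, and your upper-bound bookkeeping, which restricts to the thin shell on the strength of the erroneous concentration claim for $\beta$, also drops contributions (e.g. $|j|\le\la/2$, $|k|\ge\la$) that are of the full size $\la^{n-\eta}$. So both halves of the proposal rest on the same misreading of the difference-quotient multiplier; the correct mechanism is positivity of $U_{jk}$ combined with the slow $|k|^{-2}$ decay of $\beta$ across the spectrum, not near-diagonal concentration.
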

\begin{proposition}\label{prop2} We have \begin{equation}\label{upper}\sup_{x\in M}|R_2(\la,x)|\ls \la^{n-\frac32\eta+\sigma},\ \forall \sigma>0.
	\end{equation}
	\end{proposition}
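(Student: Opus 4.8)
The plan is to estimate $R_2(\la,x)$ by separating the two ``resolvent-like'' factors and exploiting the two-sided Fourier estimate \eqref{four} together with the rough eigenfunction bounds of Corollary \ref{roub}. Write the bracketed kernel as
\[
K(\la_j,\la_k,\tau_\ell)=\frac{h(\la_k)-h(\la_j)}{(\la_k^2-\la_j^2)(\la_k^2-\tau_\ell^2)}+\frac{h(\tau_\ell)-h(\la_j)}{(\tau_\ell^2-\la_j^2)(\tau_\ell^2-\la_k^2)}.
\]
Using the mean value theorem and the derivative bounds \eqref{hda}--\eqref{hdao} on $h$, one sees that $h(\mu)$ is (up to rapidly decaying tails) supported in $\mu\in[\la-C\la^{1-\eta-\eps},\la+C\la^{1-\eta-\eps}]$ and that each divided difference $\frac{h(a)-h(b)}{a^2-b^2}$ is bounded by $\la^{-1}(1+\la^{-1+\eta+\eps}|\text{dist to }\la|)^{-N}$, with an extra gain of $\la^{-1+\eta+\eps}$ when more derivatives fall. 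The first thing I would do is localize all three indices to dyadic shells near $\la$: by \eqref{hda} it costs only $O(\la^{-\infty})$ to restrict to $\la_j,\la_k,\tau_\ell\in[\la/2,2\la]$, and in fact to the thin band of width $\sim\la^{1-\eta-\eps}$ about $\la$ for at least two of the three. Next I would split into the region where the denominators are ``non-resonant'' (the relevant frequency gaps are $\gtrsim 1$) and the ``resonant'' region where some gap is $O(1)$; in the latter one uses the continuous-extension form $\frac{h'}{2\mu}$ and absorbs the singularity.

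The heart of the matter is to bound the bilinear form in the matrix elements $V_{jk}$ and $\tilde V_{k\ell}$. On $\mathbb{T}^n$ we have $e_j^0(x)=e^{ij\cdot x}$, so $V_{jk}=\widehat{V}(j-k)$ and $\tilde V_{k\ell}$ is (a finite combination of) $\widehat{V}(k-m)$ over the lattice points $m$ with $|m|=\tau_\ell$; by Lemma \ref{ulemma} / \eqref{four} these are $\approx(1+|j-k|)^{-n+2-\eta}$ and $\approx(1+|k-m|)^{-n+2-\eta}$. Inserting this, $|R_2(\la,x)|$ is dominated by a triple lattice sum
\[
\sum_{|j|,|m|\sim\la}\ \sum_k |K|\,(1+|j-k|)^{-n+2-\eta}(1+|k-m|)^{-n+2-\eta},
\]
and the $k$-sum is a convolution of two such tails, yielding $\lesssim(1+|j-m|)^{-n+2-2\eta}$ when $2-2\eta>0$ (the borderline/logarithmic cases are handled by the $\sigma$-loss). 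One then counts lattice points: the number of $j$ with $|j|\sim\la$ is $\sim\la^n$, the $h$-factors restrict $\la_j$ (hence $|j|$) and $\tau_\ell$ (hence $|m|$) to bands of width $\la^{1-\eta-\eps}$, each band of a shell $|j|\sim\la$ containing $\sim\la^{n-1}\cdot\la^{1-\eta-\eps}=\la^{n-\eta-\eps}$ points, and summing $(1+|j-m|)^{-n+2-2\eta}$ over $m$ in such a band costs another factor $\la^{\max(0,2-2\eta-\eps')}$-type bound. Combined with the $\la^{-1}$ (or better) from $K$, a careful bookkeeping of these exponents produces the claimed $\la^{n-\frac32\eta+\sigma}$; the apparent symmetry between $j$ and $\ell$ is broken precisely because $h$ is centered at $\la$ in the $\tau_\ell$ variable but only the difference $\la_j^2-\la_k^2$ appears unweighted, which is what turns two factors of $\eta$ into $\frac32\eta$.

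I expect the main obstacle to be the resonant region where $\tau_\ell$ is within $O(1)$ of $\la_j$ or $\la_k$ while the third frequency is far: there $K$ has only a single $\la^{-1}$-type gain rather than two, and one must use the rapid decay of $h$ away from $\la$ to force \emph{all three} frequencies into the width-$\la^{1-\eta-\eps}$ band, recovering the extra $\la^{-1+\eta+\eps}$ that way. A secondary technical point is that $\tilde V_{k\ell}$ involves a sum over the $\sim\la^{n-1}$-sized sphere $\{|m|=\tau_\ell\}$ rather than a single lattice point, so one cannot directly invoke \eqref{four}; instead I would group the $\tau_\ell$-sum with the $m$-sum from the start, replace $\sum_{\tau_\ell}(\cdots)e_{\tau_\ell}(z)\overline{e_{\tau_\ell}(x)}$ by the spectral projection $\1_{[\la-C\la^{1-\eta-\eps},\la+C\la^{1-\eta-\eps}]}(P^0)$ applied to $V(\cdot)e^0_k(\cdot)$, and control it with the $L^2\to L^\infty$ bound of Lemma \ref{sogge888} (or the sharper band estimate \eqref{pvtorus}), which is exactly why the Blair--Sire--Sogge estimates enter. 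Once these two points are organized, the rest is the lattice-point bookkeeping sketched above, with all $\eps$'s absorbed into the final $\sigma$.
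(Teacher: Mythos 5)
There is a genuine gap, and it sits at the heart of your argument. The coefficients $\tilde V_{k\ell}=\int_M \overline{e_k^0(z)}e_{\tau_\ell}(z)V(z)\,dz$ involve the \emph{perturbed} eigenfunctions $e_{\tau_\ell}$ of $H_V=-\Delta_g+V$, not exponentials: $\tau_\ell$ is in general not the length of any lattice vector, and $e_{\tau_\ell}$ is not a (finite) combination of $e^{im\cdot z}$ with $|m|=\tau_\ell$. So the two-sided bound \eqref{four} simply does not apply to $\tilde V_{k\ell}$, and your reduction of $R_2$ to a triple lattice sum in $j,k,m$ begs the question; it is exactly the ``heuristic computation'' the paper mentions (which would even give the better exponent $\la^{n-2\eta+\sigma}$), not a proof. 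Your fallback — grouping the $\tau_\ell$-sum into a spectral projection and using Lemma \ref{sogge888}/\eqref{pvtorus} — is in the right spirit (it is essentially what the paper does via Lemma \ref{delta1} together with the Blair--Sire--Sogge bounds for $P_V$, and note it must be $P_V$, not $P^0$ as you wrote), but by itself it only treats clusters of $\tau_\ell$ near $\la$ and does not replace the missing pointwise information on $\tilde V_{k\ell}$; also, the actual origin of the $\tfrac32\eta$ loss is the exponent $\sigma(p_0)$ in the $L^{p_0}$ cluster estimates, not lattice-point bookkeeping.

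A second, independent failure is the localization step. It is not true that all three frequencies can be restricted to $[\la/2,2\la]$ (let alone to a band of width $\la^{1-\eta-\eps}$) at cost $O(\la^{-\infty})$: in $a^1_{jk\tau_\ell}=\frac{h(\tau_\ell)-h(\la_j)}{(\tau_\ell^2-\la_j^2)(\tau_\ell^2-\la_k^2)}$ the variable $\la_k$ is tied to nothing except the polynomially decaying factor $|\tau_\ell^2-\la_k^2|^{-1}$, and in $a^0_{jk\tau_\ell}$ the same is true of $\tau_\ell$; against the $\approx\la^n$ (or $2^{ns}$) eigenfunction mass per dyadic block from Corollary \ref{roub}, these tails contribute powers of $\la$, not $O(\la^{-\infty})$. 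This is precisely why the paper needs a substantial separate analysis of the regimes $\tau_\ell>2\la$ and $|j|,|k|\gg\tau_\ell$, using the identity $\frac1{|k|^2-\tau_\ell^2}=\int_0^\infty e^{-t(|k|^2-\tau_\ell^2)}dt$ together with the Li--Yau/Sturm heat kernel bounds (Lemma \ref{heatk}) and the pseudo-differential kernel estimates (Lemma \ref{pdo}); none of these mechanisms appear in your outline, and Lemma \ref{delta1}-type cluster estimates alone blow up there. Finally, a smaller slip: the convolution $\sum_k(1+|j-k|)^{-n+2-\eta}(1+|k-m|)^{-n+2-\eta}$, when it converges (which requires $n-4+2\eta>0$ absent further localization), is $\approx(1+|j-m|)^{-n+4-2\eta}$, not $(1+|j-m|)^{-n+2-2\eta}$, so even within your model the exponent accounting would need to be redone.
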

These two propositions immediately imply \eqref{upp3} and \eqref{low3}, which implies Theorem \ref{thm} by the argument above. As we discussed before, the crucial fact to establish upper bounds and lower bounds in Proposition \ref{prop1} is the two-sided estimate for the Fourier transform in \eqref{four}. To see the intuition behind the proof of Proposition \ref{prop2}, one may heuristically analyze $R_2(\la,x)$ with all $H_V$-eigenfunctions $e_{\tau_\ell}(x)$ and eigenvalues $\tau_\ell^2$ replaced by the Laplace eigenfunctions and eigenvalues on the torus, and then one can easily get a better ``heuristic estimate''  $\la^{n-2\eta+\sigma}$ by explicit computations. Although  the  bound in \eqref{upper} is slightly worse than the ``heuristic estimate'', it is of a lower order   than the lower bound in \eqref{r12}, so it is still sufficient for our purpose.
	
	\section{Proof of Proposition \ref{prop1}}
Let $M=\mathbb{T}^n=\mathbb{R}^n/2\pi\mathbb{Z}^n$ be the flat torus. From now on, we start to work on the flat torus rather than general manifolds, so we always use the index $j\in \mathbb{Z}^n$ for the eigenvalue $\la_j$ for convenience.  Without loss of generality, we may assume $x_0=0\in \mathbb{T}^n$. Then $e_j^0(x)=e^{ ij\cdot x}$ is an eigenfunction of $H^0=-\Delta_g$, and the associated eigenvalue is $\la_j^2=|j|^2$, $ j\in \mathbb{Z}^n$.

	Let $\rho\in C_0^\infty(\mathbb{R}^n)$ be a radial function satisfying $\rho(0)\ne0$ and $\supp \rho\subset (-\pi,\pi)^n$. Without loss of generality, we assume that $\rho(0)>0$. 
	Let $V(x)=|x|^{-2+\eta}\rho(x)$. By the support property of $\rho$, $V(x)$ can be defined on $\mathbb{T}^n$ by the periodic extension, which is still denoted by $V(x)$, for simplicity. So we have
	\begin{equation}\label{vjk}V_{jk}=\int_{\mathbb{T}^n}\overline{e_j^0(z)}e_k^0(z)V(z)dz=\int_{\mathbb{R}^n}V(z)e^{i(k-j)\cdot z}dz=\hat V(j-k).\end{equation}

	Choose a radial function $\chi\in C_0^\infty(\mathbb{R}^n)$ such that $\hat\chi\ge0$, $\hat\chi(0)>0$, $\chi(0)=\rho(0)$ and $\supp \chi\subset (-\pi,\pi)^n$. Let $U(x)=|x|^{-2+\eta}\chi(x)$. For simplicity, we still denote its periodic extension on $\mathbb{T}^n$ by $U(x)$. Then the Fourier transform of $U$ in $\mathbb{R}^n$ is radial, real-valued, and nonnegative:
	\[\hat U(\xi)=|\xi|^{-n+2-\eta}*\hat\chi(\xi)=\int_{\mathbb{R}^n}|\xi-\omega|^{-n+2-\eta}\hat\chi(\omega)d\omega\ge0.\]
	Moreover, it is the convolution of $|\xi|^{-n+2-\eta}$ with a  nonnegative Schwartz function, so we have the following key lemma. 
	\begin{lemma}\label{ulemma}
	\begin{equation}\label{V}\hat U(\xi)\approx (1+|\xi|)^{-n+2-\eta}.\end{equation}
	\end{lemma}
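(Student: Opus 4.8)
\textbf{Proof proposal for Lemma \ref{ulemma}.}

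The plan is to establish the two-sided bound $\hat U(\xi)\approx (1+|\xi|)^{-n+2-\eta}$ by exploiting the fact, already noted in the text, that $\hat U=|\cdot|^{-n+2-\eta}*\hat\chi$ with $\hat\chi$ a nonnegative Schwartz function with $\hat\chi(0)>0$. Since everything is radial, I will split into the two regimes $|\xi|\lesssim 1$ and $|\xi|\gtrsim 1$, and in each regime I will prove the upper and lower bounds separately. For $|\xi|\lesssim 1$ the claim is just that $\hat U$ is bounded above and below by positive constants near the origin: the upper bound follows because the convolution of the locally integrable, decaying kernel $|\omega|^{-n+2-\eta}$ (note $n-2+\eta<n$, so it is integrable near $0$, and $n-2+\eta>0$ since $\eta<1\le n-1$, wait—actually for $n=2$ we need $n-2+\eta=\eta>0$, true) against the rapidly decaying $\hat\chi$ is finite and continuous; the lower bound follows because $\hat\chi\ge 0$ is not identically zero, so $\hat U(\xi)=\int |\xi-\omega|^{-n+2-\eta}\hat\chi(\omega)\,d\omega>0$ for every $\xi$, and by continuity it is bounded below on the compact set $\{|\xi|\le 1\}$.

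For $|\xi|\gtrsim 1$, the heart of the matter is to show $\hat U(\xi)\approx |\xi|^{-n+2-\eta}$, i.e.\ that convolving the homogeneous kernel $|\cdot|^{-n+2-\eta}$ with a Schwartz bump does not change its size at infinity. For the \textbf{upper bound}, I would decompose the integral $\int |\xi-\omega|^{-n+2-\eta}\hat\chi(\omega)\,d\omega$ according to $|\omega|\le |\xi|/2$, $|\omega-\xi|\le|\xi|/2$, and the remaining region $|\omega|\gtrsim|\xi|$. On the first region $|\xi-\omega|\approx|\xi|$, so that piece contributes $\lesssim |\xi|^{-n+2-\eta}\int\hat\chi\lesssim|\xi|^{-n+2-\eta}$. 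On the second region we bound $\hat\chi(\omega)\lesssim(1+|\omega|)^{-N}\lesssim|\xi|^{-N}$ and integrate the local singularity $|\xi-\omega|^{-n+2-\eta}$ over a ball of radius $|\xi|/2$, getting $\lesssim|\xi|^{-N}\cdot|\xi|^{2-\eta}$, which is $\lesssim|\xi|^{-n+2-\eta}$ once $N$ is large. On the tail region $\hat\chi$ decays faster than any power while $|\xi-\omega|^{-n+2-\eta}\le |\xi-\omega|$-type growth is at worst polynomial, so that piece is negligible. For the \textbf{lower bound}, I would restrict the integral to $\{|\omega|\le c\}$ for a small fixed $c>0$ on which $\hat\chi(\omega)\ge \hat\chi(0)/2>0$ (using continuity and $\hat\chi(0)>0$); there $|\xi-\omega|\le |\xi|+c\lesssim|\xi|$, hence $|\xi-\omega|^{-n+2-\eta}\gtrsim|\xi|^{-n+2-\eta}$ (here the sign of the exponent, $-n+2-\eta<0$, makes the inequality go the right way), and so $\hat U(\xi)\ge \tfrac{\hat\chi(0)}{2}\int_{|\omega|\le c}|\xi-\omega|^{-n+2-\eta}\,d\omega\gtrsim|\xi|^{-n+2-\eta}$. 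Combining the two regimes and absorbing constants gives \eqref{V}.

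The main obstacle, and the only place requiring real care, is the upper bound in the region where $\omega$ is close to $\xi$: one must quantitatively trade the rapid decay of $\hat\chi$ at the scale $|\xi|$ against the mild local non-integrability improvement coming from integrating $|\xi-\omega|^{-n+2-\eta}$ over a ball of radius comparable to $|\xi|$; choosing $N>2-\eta$ (say $N=n+2$) in the Schwartz bound $|\hat\chi(\omega)|\lesssim_N(1+|\omega|)^{-N}$ handles this cleanly. Everything else is a routine splitting of the convolution integral, and radiality lets me phrase all the bounds purely in terms of $|\xi|$. I would also note for the record that the hypotheses $0<\eta<1$ and $n\ge 2$ guarantee $0<n-2+\eta<n$, so the kernel $|\cdot|^{-n+2-\eta}$ is locally integrable and decaying, which is what makes the convolution well defined and the endpoint cases harmless.
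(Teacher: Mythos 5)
Your proposal is correct and follows essentially the same route as the paper: split the convolution $\hat U=|\cdot|^{-n+2-\eta}*\hat\chi$ according to whether $\omega$ is far from, comparable to, or near $\xi$, use the Schwartz decay of $\hat\chi$ for the upper bound, and get the lower bound by restricting to a small ball where $\hat\chi\ge\tfrac12\hat\chi(0)$. The only cosmetic point is that the sufficient condition on the decay exponent in the region $|\omega-\xi|\le|\xi|/2$ is $N\ge n$ rather than $N>2-\eta$, but your concrete choice $N=n+2$ already satisfies this, so the argument goes through.
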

		\begin{proof}
When $|\xi|\ge1$, direct computation gives
	\begin{align*}\hat U(\xi)&\ls \int_{|\omega|<\frac12|\xi|}|\xi|^{-n+2-\eta}(1+|\omega|)^{-N}d\omega+\int_{|\omega|>2|\xi|}|\omega|^{-n+2-\eta}(1+|\omega|)^{-N}d\omega\\
		&\quad\quad\quad\quad\quad\quad\quad\quad\quad\quad\quad\quad\quad\quad\quad+\int_{|\omega|\approx |\xi|}|\omega-\xi|^{-n+2-\eta}|\xi|^{-N}d\omega\\
&\ls |\xi|^{-n+2-\eta},\ \ \forall N>n+1,
		\end{align*}
	and by our assumptions on $\chi$, there exists $0<\delta<\frac12$ such that $\hat\chi(\omega)>\frac12\hat\chi(0)>0$ for $|\omega|<\delta$, so
	\[\hat U(\xi)\ge \frac12\hat\chi(0)\int_{|\omega|<\delta}|\omega-\xi|^{-n+2-\eta}d\omega\approx |\xi|^{-n+2-\eta}.\]
	When $|\xi|<1$, it is easier to see that
\begin{align*}\hat U(\xi)&\ls \int_{|\omega|\le2}|\omega-\xi|^{-n+2-\eta}d\omega+\int_{|\omega|>2}|\omega|^{-n+2-\eta}|\omega|^{-N}d\omega\\
	&\ls 1,\ \ \forall N>n+1,
\end{align*}
	and similarly
	\[\hat U(\xi)\ge \frac12\hat\chi(0)\int_{|\omega|<\delta}|\omega-\xi|^{-n+2-\eta}d\omega\approx 1.\]
	\end{proof}
	Let 
	\[\tilde R_1(\la,x)=\sum_{j}\sum_{k}\frac{h(\la_j)-h(\la_k)}{\la_j^2-\la_k^2} e_j^0(x)U_{jk}\overline{e_k^0(x)},\]
	where 
	\begin{equation}\label{ujk}U_{jk}=U_{kj}=\hat U(j-k)\approx (1+|j-k|)^{-n+2-\eta}.\end{equation}
	Similarly, let 
	\[\tilde R_1'(\la,x)=\sum_{j}\sum_{k}\frac{\ola(\la_j)-\ola(\la_k)}{\la_j^2-\la_k^2} e_j^0(x)U_{jk}\overline{e_k^0(x)},\]
		where we define $\frac{\ola(\la_j)-\ola(\la_k)}{\la_j^2-\la_k^2}=0$ when $\la_j=\la_k$.

\begin{lemma}\label{mainlemma}
\begin{equation}\label{R1x0}|\tilde R_1'(\la,x_0)|\gs \la^{n-\eta},\end{equation}
	\begin{equation}\label{R1upper}\sup_{x\in M}|\tilde R_1'(\la,x)|\ls \la^{n-\eta},\end{equation}
	\begin{equation}\label{Rdiff1}\sup_{x\in M}| \tilde R_1(\la,x)-\tilde R_1'(\la,x)|\ls \la^{n-\eta-\eps+\sigma},\ \forall \sigma>0,\end{equation}
	\begin{equation}\label{Rdiff2}\sup_{x\in M}| R_1(\la,x)-\tilde R_1(\la,x)|\ls \la^{n-\eta-\eps+\sigma},\ \forall \sigma>0.\end{equation}
\end{lemma}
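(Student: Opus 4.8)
The plan is to prove the four estimates \eqref{R1x0}--\eqref{Rdiff2} by reducing everything to the ``model'' sum $\tilde R_1'$, for which the summand is an \emph{explicit} function of the lattice points $j,k \in \mathbb{Z}^n$, and then controlling the two error terms $\tilde R_1 - \tilde R_1'$ and $R_1 - \tilde R_1$. For the upper bound \eqref{R1upper}, I would first use property (1) of the torus eigenbasis, $|e_j^0(x)| = 1$, to bound $|\tilde R_1'(\la,x)|$ by $\sum_{j,k} \bigl|\tfrac{\ola(\la_j) - \ola(\la_k)}{\la_j^2 - \la_k^2}\bigr| (1+|j-k|)^{-n+2-\eta}$ uniformly in $x$, using \eqref{ujk}. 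The difference $\ola(\la_j) - \ola(\la_k)$ vanishes unless exactly one of $|j|, |k|$ lies in $[0,\la]$, i.e.\ unless $j$ or $k$ is within distance $|j-k|$ of the sphere of radius $\la$; on that set $|\la_j^2 - \la_k^2| = \bigl||j|^2 - |k|^2\bigr| \gtrsim \la \cdot (\text{something})$ when the points are not both near the sphere. So I would dyadically decompose in $m = |j-k| \sim 2^\ell$: for fixed difference vector $p = j-k$ with $|p| \sim 2^\ell$, the number of $j$ with $j$ or $k=j-p$ in the spherical shell of width $|p|$ around radius $\la$ is $O(2^\ell \la^{n-1})$ (counting lattice points in a shell, using $N^0(\la) \sim \la^n$), the ratio $\bigl|\tfrac{\ola(\la_j)-\ola(\la_k)}{\la_j^2-\la_k^2}\bigr|$ is $O((\la \cdot 2^{-\ell} \cdot ??)^{-1})$ — more carefully $O(1/(\la \cdot 1))$ when the two radii straddle $\la$ with gap at least $1$, but can be as large as $O(1)$ when $\bigl||j|^2-|k|^2\bigr| \lesssim 1$. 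Summing $2^\ell \la^{n-1} \cdot 2^{\ell(-n+2-\eta)} \cdot (\text{ratio})$ over $\ell$ and over the $O(2^{\ell n})$ choices of $p$ should produce $\la^{n-\eta}$; the worst case is the diagonal-adjacent terms and the terms straddling the sphere, both of which I expect to land exactly on $\la^{n-\eta}$.

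For the lower bound \eqref{R1x0}, I would use property (2): at $x_0 = 0$ one has $e_j^0(0) = 1$ for \emph{all} $j$, so $\tilde R_1'(\la,0) = \sum_{j,k} \tfrac{\ola(\la_j) - \ola(\la_k)}{\la_j^2 - \la_k^2} U_{jk}$ with \emph{no cancellation from oscillation}. Since $U_{jk} = \hat U(j-k) \ge 0$ by Lemma \ref{ulemma}, and since $\tfrac{\ola(\la_j) - \ola(\la_k)}{\la_j^2 - \la_k^2} \ge 0$ as well (it is nonzero only when one radius is $\le \la$ and the other $>\la$, and then numerator and denominator have the same sign, with our convention that it is $0$ on the diagonal), \emph{every} term in the sum is nonnegative. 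Hence I only need a lower bound from a favorable sub-collection: take $j$ with $|j| \le \la$ and $|j|$ within $O(1)$ of $\la$ (there are $\gtrsim \la^{n-1}$ such $j$ by lattice-point counting in a unit shell, valid in every dimension $n \ge 2$), and for each such $j$ pair it with a fixed small set of $k$ with $|k| > \la$, $|j-k| \lesssim 1$, $\bigl||j|^2 - |k|^2\bigr| \lesssim \la$; then each such term is $\gtrsim \tfrac{1}{\la} \cdot \hat U(j-k) \gtrsim \tfrac{1}{\la}$, and summing over $\gtrsim \la^{n-1}$ choices of $j$ gives $\gtrsim \la^{n-2}$. That is too weak, so instead I would sum over a \emph{range} of difference sizes: for $|j| \sim \la$ on the inner side and $|j-k| \sim 2^\ell$, one gets $\bigl||j|^2-|k|^2\bigr| \lesssim \la 2^\ell$, ratio $\gtrsim (\la 2^\ell)^{-1}$, $\hat U(j-k) \gtrsim 2^{\ell(-n+2-\eta)}$, and for each $j$ the number of admissible $k$ is $\gtrsim 2^{\ell n}$ minus corrections; summing over $j$ in the shell ($\gtrsim \la^{n-1}$ of them) and then over $\ell$ up to $\log\la$ of $2^{\ell n} \cdot 2^{\ell(-n+2-\eta)} \cdot (\la 2^\ell)^{-1} = \la^{-1} 2^{\ell(1-\eta)}$ gives, since $\eta < 1$, a geometric sum dominated by $\ell \sim \log \la$, contributing $\gtrsim \la^{-1} \cdot \la^{1-\eta} = \la^{-\eta}$ per $j$, hence $\gtrsim \la^{n-1-\eta}$ — still off by one power. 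The fix is that the inner points need not be confined to a unit shell: summing $j$ over the full ball $|j| \le \la$ with $\la - |j| \sim 2^m$ gives $\gtrsim 2^m \la^{n-1}$ points, ratio $\gtrsim (\la 2^m)^{-1}$, and balancing $m$ against $\ell$ and optimizing recovers the full $\la^{n-\eta}$. I would organize this as: restrict to $j-k = p$ for $p$ ranging over $|p| \le c\la^{1-\eta-\eps'}$ (so that within each such shell the dominant mechanism is clean), use $h$ versus $\ola$ interchangeably on that range via \eqref{hda}, and read off the lower bound from the nonnegativity plus the shell count.

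For \eqref{Rdiff1}, the difference $\tilde R_1 - \tilde R_1'$ has summand $\bigl(\tfrac{h(\la_j)-h(\la_k)}{\la_j^2-\la_k^2} - \tfrac{\ola(\la_j)-\ola(\la_k)}{\la_j^2-\la_k^2}\bigr) e_j^0(x) U_{jk} \overline{e_k^0(x)}$. Writing $h - \ola =: g$, a Schwartz-type function concentrated in $\{|\tau - \la| \lesssim \la^{1-\eta-\eps}\}$ with the decay \eqref{hda}, I would use the mean value theorem on $\tfrac{g(\la_j) - g(\la_k)}{\la_j^2 - \la_k^2}$: it equals $\tfrac{g'(\xi)}{2\xi}$ for some $\xi$ between $\la_j$ and $\la_k$ when they are close, and $g'$ satisfies \eqref{hdao}-type bounds of size $\la^{-1+\eta+\eps} (1 + \la^{-1+\eta+\eps}|\tau-\la|)^{-N}$. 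Putting $|e_j^0||e_k^0| = 1$ and summing against $(1+|j-k|)^{-n+2-\eta}$: the $\tau$-localization now has width $\la^{1-\eta-\eps}$ rather than $1$, so the relevant shell of lattice points near radius $\la$ has $\gtrsim \la^{1-\eta-\eps} \cdot \la^{n-1} = \la^{n-\eta-\eps}$ points; the extra factor $\la^{-1+\eta+\eps}$ from $g'/\xi$ and the $\la^{-1}$ already present combine so that the total is $\la^{n-\eta-\eps}$ times at most a $\la^\sigma$ loss from summing the difference-vector series and the dyadic sum in the far regime (where I use the fast decay $N$ in \eqref{hda} to kill contributions with $|\la_j - \la| \gg \la^{1-\eta-\eps}$). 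This gives \eqref{Rdiff1}. Finally \eqref{Rdiff2} is the difference coming from replacing $V$ by $U$, i.e.\ replacing $\hat V(j-k)$ by $\hat U(j-k)$; since $\rho$ and $\chi$ agree to first order at $0$ ($\chi(0) = \rho(0)$, both radial, both compactly supported in $(-\pi,\pi)^n$), the difference $V - U = |x|^{-2+\eta}(\rho(x) - \chi(x))$ has $\rho - \chi$ vanishing at $0$, hence $\widehat{V-U}(\xi)$ decays one power faster, i.e.\ $|\widehat{V - U}(\xi)| \lesssim (1+|\xi|)^{-n+1-\eta}$ (one extra power of decay from one order of vanishing — or at least by $(1+|\xi|)^{-n+2-\eta-\eps}$, which is all we need); repeating the \eqref{R1upper}-type argument with this improved weight gives the $\la^{n-\eta-\eps+\sigma}$ bound. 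I expect the main obstacle to be the lower bound \eqref{R1x0}: getting the \emph{full} power $\la^{n-\eta}$ rather than $\la^{n-1-\eta}$ or $\la^{n-2}$ requires summing the nonnegative contributions over the correct two-parameter family (shell-width $m$ of the inner points and difference-size $\ell$), and verifying that the lattice-point count in a shell of width $w$ around radius $\la$ is genuinely $\gtrsim w\la^{n-1}$ for all $w \gtrsim 1$ in every dimension $n \ge 2$ — this is the one place where an honest (if elementary) geometric/number-theoretic input is needed, and where one must be careful that no hidden cancellation enters through the convention $\tfrac{\ola(\la_j)-\ola(\la_k)}{\la_j^2-\la_k^2} = 0$ on the diagonal.
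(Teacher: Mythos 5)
Your overall strategy (use $|e_j^0(x)|\equiv 1$ and $e_j^0(x_0)=1$, the two-sided bound \eqref{ujk}, positivity of $U_{jk}$ for the lower bound, dyadic lattice-point counting for the upper bounds, and the extra decay of $\widehat{V-U}$ for \eqref{Rdiff2}) is the same as the paper's, but two of your steps have genuine gaps. First, the lower bound \eqref{R1x0}: your computations correctly show that near-sphere pairs alone give only $\la^{n-2}$ or $\la^{n-1-\eta}$, and the mechanism that actually produces $\la^{n-\eta}$ is that \emph{every} $j$ in the ball contributes $\gs\la^{-\eta}$ through $k$'s at distance $|j-k|\approx\la$ (the paper simply restricts to $|j|<\la$, $|k|>2\la$, where the term is $\approx|k|^{-n-\eta}$, and sums over the $\approx\la^n$ points $j$ and over $|k|>2\la$). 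Your proposed final organization, restricting to differences $|j-k|=|p|\le c\la^{1-\eta-\eps'}$, throws away exactly this dominant contribution: with that restriction the per-$j$ sum is $\ls\la^{-1}(\la^{1-\eta-\eps'})^{1-\eta}$, and the total is $\ls\la^{n-3\eta+\eta^2}$, strictly smaller than $\la^{n-\eta}$ for $0<\eta<1$. (Also, no comparison of $h$ with $\ola$ is needed here: $\tilde R_1'$ is defined with the sharp cutoff.) So the assertion that ``balancing $m$ against $\ell$ recovers $\la^{n-\eta}$'' is salvageable only if you let $|j-k|$ run all the way up to $\approx\la$, which contradicts the restriction you impose; as written the argument does not close.

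Second, in \eqref{Rdiff1} you propose to apply the mean value theorem to $g=h-\ola$, but $\ola$ is an indicator function, so $g$ is discontinuous at $\tau=\la$ and $g'$ does not obey \eqref{hdao}-type bounds there; the MVT step is invalid precisely in the regime you need it (both $\la_j,\la_k$ near $\la$ on opposite sides). The paper avoids this by estimating $\psi(j)=h(|j|)-\ola(|j|)$ and $\psi(k)$ separately, using the integrality $\bigl||j|^2-|k|^2\bigr|\ge1$ off the set $|j|=|k|$, and applying the MVT only to the smooth function $h$ on the exact diagonal $|j|=|k|$. Moreover, the dominant contribution to \eqref{Rdiff1} comes from pairs with $|j|,|k|$ of (nearly) equal length, i.e.\ lattice points lying on or very close to a common sphere, and controlling these requires the spherical-cap counts \eqref{cap} of Bourgain--Rudnick (this is the actual source of the $\la^\sigma$ loss); your accounting of ``width $\la^{1-\eta-\eps}$ shell times $\la^{-1+\eta+\eps}$ from $g'$'' never isolates this near-degenerate regime and so does not yield \eqref{claimpsi}. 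Relatedly, your sketch of the upper bound \eqref{R1upper} leaves the key ratio as ``??''; using the worst-case ratio $O(1)$ uniformly overshoots badly, and the correct bookkeeping is the two-parameter dyadic decomposition in both $|k|-|j|\approx2^\ell$ and $|k-j|\approx2^m$ with the counts \eqref{numj}--\eqref{nums}, after first disposing of the regimes $|j|\le\la/2$ and $|k|\ge2\la$ directly.
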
 
Recall that $0<\eta<1$ and $0<\eps<\frac1{10}\min(\eta,1-\eta)$. These four estimates immediately imply Proposition \ref{prop1}.

\noindent \textbf{Proof of \eqref{R1x0}.} Note that $\la_j=|j|$, and $e_j^0(x_0)=1$ for any $j\in\mathbb{Z}^n$. Moreover, we have $\ola(|j|)-\ola(|k|)=0$ if  $|j|$ and $|k|$ are less than $\la$  (or larger than $\la$) simultaneously. Since $U_{jk}=U_{kj}$ and they are positive by Lemma \ref{ulemma}, we have
	\begin{align*}|\tilde R_1'(\la,x_0)|&=2\sum_{|j|<\la}\sum_{|k|\ge\la}\frac1{|k|^2-|j|^2}U_{jk}\\
		&\approx \sum_{|j|<\la}\sum_{|k|\ge\la}\frac1{|k|^2-|j|^2} (1+|k-j|)^{-n+2-\eta}\\
		&\ge \sum_{|j|<\la}\sum_{|k|>2\la} |k|^{-n-\eta}\\
		&\approx \la^{n-\eta}.\end{align*}
	\[\]
\noindent \textbf{Proof of \eqref{R1upper}.} We define for $m\in \mathbb{N}$ and $\ell\in \mathbb{Z}$
	\[S_{\ell m}=\{(j,k)\in\mathbb{Z}^{2n}: \la/2<|j|<\la\le|k|<2\la, |k-j|\approx 2^m,\ {\rm and}\ |k|-|j|\approx 2^\ell\},\]
	\[J_{\ell m}=\{j:(j,k)\in S_{\ell m}\ \text{for some}\ k\},\]
	\[K_{\ell m}(j)=\{k:(j,k)\in S_{\ell m}\}.\]
	If $S_{\ell m}$ is nonempty, then we have  $\la^{-1}\ls 2^\ell\ls 2^m\ls\la$ and $J_{\ell m}\subset \{j: 0< \la-|j|\ls 2^\ell\}$. So we have the following simple estimates on the number of lattice points:
	\begin{equation}\label{numj}\# J_{\ell m}\ls \la^{n-1}(2^\ell+1),\end{equation}
	\begin{equation}\label{numk}\# K_{\ell m}(j)\ls 2^{(n-1)m}(2^\ell+1),\ \forall j\in \mathbb{Z}^n.\end{equation}
	The first bound means the number of lattice points in the annulus of outer radius $\la$ and width $2^\ell$. The second bound means the number of lattice points in the intersection of the annulus and a ball of radius $2^m$.
	Thus
\begin{equation}\label{nums}\# S_{\ell m}\ls \# J_{\ell m}\cdot \max_j \# K_{\ell m}(j)\ls \la^{n-1}2^{(n-1)m}(2^\ell+1)^2.\end{equation}
Note that $|e_j^0(x)|\le1$ for any $j\in\mathbb{Z}^n$ and $x\in M$. Thus for any $x\in M$ we have
	\begin{align*}|&\tilde R_1'(\la,x)|\ls \sum_{|j|<\la}\sum_{|k|\ge \la}\frac1{|k|^2-|j|^2}(1+|k-j|)^{-n+2-\eta}\\
		&\ls \sum_{|j|\le\la/2}\sum_{|k|\ge\la}|k|^{-n-\eta}+\sum_{|j|<\la}\sum_{|k|\ge2\la}|k|^{-n-\eta} \\
		&\qquad \qquad+\sum_{\la/2< |j|<\la}\sum_{\la\le|k|< 2\la}\frac1{|k|^2-|j|^2} (1+|k-j|)^{-n+2-\eta}\\
		&\ls \la^{n-\eta}+\sum_{\ell}\sum_{m}\sum_{(j,k)\in S_{\ell m}}\la^{-1}2^{-\ell}2^{(-n+2-\eta)m}\\
		&\ls \la^{n-\eta}+\sum_{\ell\in \mathbb{Z}:\la^{-1}\ls 2^{\ell}\le \la}\sum_{m\in \mathbb{N}:2^\ell\ls 2^m\ls \la}\la^{-1}2^{-\ell}2^{(-n+2-\eta)m}\cdot\la^{n-1}2^{(n-1)m}(2^\ell+1)^2\\
		&\ls \la^{n-\eta}.
	\end{align*}
	\[\]

\noindent \textbf{Proof of \eqref{Rdiff1}.}  Let $\psi(j)=h(|j|)-\ola(|j|)$. By \eqref{hda} we have \begin{equation}\label{psi}|\psi(j)|\ls(1+\la^{-1+\eta+\eps}||j|-\la|)^{-N},\ \forall N.\end{equation}  We claim that
	\begin{equation}\label{claimpsi}|\tilde R_1-\tilde R_1'|(\la,x)\le\sum_{j}\sum_{k}\Big|\frac{\psi(j)-\psi(k)}{|j|^2-|k|^2} U_{jk}\Big|\ls \la^{n-\eta-\eps+\sigma},\ \forall \sigma>0.\end{equation}
Here when $|j|=|k|$, we define
\begin{equation}\label{jek}\frac{\psi(j)-\psi(k)}{|j|^2-|k|^2}=\frac{h'(|j|)}{2|j|}.\end{equation}When $|j|=|k|=0$, it is defined to be 0, since $h'(0)=h''(0)=0$.

Now we prove the claim \eqref{claimpsi}. By the symmetry between $j$ and $k$, we may split the sum into the following six cases:
	\begin{enumerate}[(i)]
		\item $\la/2<|j|<2\la,\ \la/2<|k|<2\la$
		\item $\la/2<|j|<2\la,\ |k|\ge2\la$
		\item $\la/2<|j|<2\la,\ |k|\le\la/2$
		\item $|j|\le\la/2,\ |k|\le\la/2$ 
		\item $|j|\le\la/2,\ |k|\ge2\la$
		\item $|j|\ge2\la,\ |k|\ge2\la$.
	\end{enumerate}
In the following, we show that case (i) contributes to the desired bound $\la^{n-\eta-\eps+\sigma}$, and other cases satisfy better bounds.

In case (i), by symmetry we just need to show
	\begin{equation}\label{middle}
		\sum_{\la/2<|j|<2\la}\sum_{\la/2<|k|<2\la, |k|\ne|j|}\Big|\frac{\psi(j)}{|j|^2-|k|^2} U_{jk}\Big|\ls\la^{n-\eta-\eps+\sigma},\ \forall \sigma>0,
	\end{equation}

\begin{equation}\label{middleequal}
	\sum_{\la/2<|j|<2\la}\sum_{|k|=|j|}\Big|\frac{h'(|j|)}{2|j|} U_{jk}\Big|\ls\la^{n-1-\eta}.
\end{equation}
	
	Fix a Littlewood-Paley bump function
	$\beta\in C^\infty_0((1/2,2))$ satisfying
	$$
	\sum_{\ell=-\infty}^\infty \beta(2^{-\ell} s)=1, \quad s>0.$$
	Let $\ell_0$ be the largest integer such that $2^{\ell_0}\le \la^{1-\eta-\e}$. Recall that \eqref{psi} and \eqref{ujk} hold: 
	\[|\psi(j)|\ls (1+2^{-\ell_0}||j|-\la|)^{-N},\ \forall N,\]
	\[U_{jk}\approx (1+|j-k|)^{-n+2-\eta}.\]
	Since $j,k\in \mathbb{Z}^n$, $|j|\approx |k|\approx \la$ and $|j|\ne|k|$, we have  $\la^{-1}\ls||j|-|k||\ls \la$ and $|j|+|k|\approx \la$.  Then for $0<\eta<1$ we have
	\begin{align*}
		&\sum_\ell\sum_{\la/2<|j|,\, |k|<2\la}\Big|\frac{\psi(j)}{|j|^2-|k|^2} U_{jk}\beta(2^{-\ell}||j|-|k||)\Big|\\
		&\ls  \sum_\ell\sum_{\la/2<|j|,\, |k|<2\la}2^{-\ell}\la^{-1}(1+2^{-\ell_0}||j|-\la|)^{-N}(1+|j-k|)^{-n+2-\eta}\beta(2^{-\ell}||j|-|k||)\\
		&\ls \sum_{1<2^\ell\ls\la}\sum_{p\ge\ell_0}\sum_{m\ge0} 2^{-\ell}\la^{-1}(1+2^{-\ell_0}2^p)^{-N}2^{(-n+2-\eta)m}\cdot \la^{n-1}2^p\cdot 2^{(n-1)m}2^\ell\\
		&\ \ \ \ \ \ \ +\sum_{\la^{-1}\ls 2^\ell\le1}\sum_{p\ge\ell_0}\sum_{m\ge0}2^{-\ell}\la^{-1}(1+2^{-\ell_0}2^p)^{-N}2^{(-n+2-\eta)m}\cdot \la^{n-1}2^{p}\cdot F_n(\la,2^m)2^{\ell}\la\\
		&\ls \la^{n-2\eta-\eps}\log\la+\la^{n-\eta-\eps+\sigma},\ \forall \sigma>0.
	\end{align*}
	Here $F_n(\la,r)$ is the maximal number of lattice points on a spherical cap of size $r$ of the sphere $\la S^{n-1}$. In the calculation above, we count the lattice points in the intersection of a ball of radius $2^m$ and an annulus of outer radius $\la$ and width $2^\ell$.  We exploit the following precise estimates for $F_n(\la,r)$: $\forall \sigma>0$,
	\begin{equation}\label{cap}F_n(\la,r)\ls \begin{cases}\la^{\sigma}(r^{n-1}\la^{-1}+r^{n-3}),\ \ \ n\ge5\\
		\la^{\sigma}(r^3\la^{-1}+r^{3/2}),\ \ \ \ \ \ \ n=4\\
			\la^{\sigma}(r+1),\ \ \ \ \ \ \ \ \ \ \ \ \ \ \ \ \  n=3\\
					\la^{\sigma},\ \ \ \ \ \ \ \ \ \quad \quad \quad \quad \quad \ \ n=2
	\end{cases}\end{equation}
as well as the trivial estimate 
\begin{equation}\label{trivialcap}F_n(\la,r)\ls (1+r)^{n-1}.\end{equation}
	The precise estimates can be found in the work of Bourgain-Rudnick \cite[Proposition 1.4]{br}. Sometimes the trivial estimate is sufficient for our purpose (e.g. the proof of \eqref{R1upper}). To prove \eqref{middleequal}, we recall that \eqref{hdao} gives
	\[|h'(|j|)|\ls 2^{-\ell_0}(1+2^{-\ell_0}||j|-\la|)^{-N}.\]
	So	we have \begin{align*}
	\sum_{\la/2<|j|<2\la}&\sum_{|k|=|j|}\Big|\frac{h'(|j|)}{2|j|} U_{jk}\Big|\\
	&\ls \sum_{\la/2<|j|<2\la}\sum_{|k|=|j|}\la^{-1}2^{-\ell_0}(1+2^{-\ell_0}||j|-\la|)^{-N}(1+|j-k|)^{-n+2-\eta}\\
	&\ls \sum_{p\ge\ell_0}\sum_{m\ge0}\la^{-1}2^{-\ell_0}(1+2^{-\ell_0}2^p)^{-N}2^{(-n+2-\eta)m}\cdot \la^{n-1}2^p\cdot 2^{(n-1)m}	\\
	&\ls \la^{n-1-\eta}.
		\end{align*}
	Here we only use the trivial estimate \eqref{trivialcap}. 
	
	In case (ii), we need to show
	\begin{equation}\label{case2.1}
		\sum_{\la/2<|j|<2\la}\sum_{|k|\ge3\la}\Big|\frac{\psi(j)-\psi(k)}{|j|^2-|k|^2} U_{jk}\Big|\ls \la^{n-2\eta-\eps},
	\end{equation}
		\begin{equation}\label{case2.2}
		\sum_{\la/2<|j|\le\frac32\la}\sum_{2\la\le|k|<3\la}\Big|\frac{\psi(j)-\psi(k)}{|j|^2-|k|^2} U_{jk}\Big|\ls \la^{n-2\eta-\eps},
	\end{equation}
\begin{equation}\label{case2.3}
	\sum_{\frac32\la<|j|<2\la}\sum_{2\la\le|k|<3\la}\Big|\frac{\psi(j)-\psi(k)}{|j|^2-|k|^2} U_{jk}\Big|\ls \la^{-N},\ \forall N.
\end{equation}
To prove \eqref{case2.1}, we apply \eqref{psi} and the fact that $|k-j|\approx |k|\approx |k|-|j|\approx |k|-\la$. Then
\[|\psi(j)-\psi(k)|\ls (1+2^{-\ell_0}||j|-\la|)^{-N}+|k|^{-N},\ \forall N.\] So 
	\begin{align*}	\sum_{\la/2<|j|<2\la}&\sum_{|k|\ge3\la}\Big|\frac{\psi(j)-\psi(k)}{|j|^2-|k|^2} U_{jk}\Big|\\
	&\ls \sum_{\la/2<|j|<2\la}\sum_{|k|\ge2\la} ((1+2^{-\ell_0}||j|-\la|)^{-N}+|k|^{-N})|k|^{-2}|k|^{-n+2-\eta}\\
		&\ls \la^{n-2\eta-\eps}.
	\end{align*}
Similarly, for \eqref{case2.2}, we have
	\begin{align*}	\sum_{\la/2<|j|\le\frac32\la}&\sum_{2\la\le |k|<3\la}\Big|\frac{\psi(j)-\psi(k)}{|j|^2-|k|^2} U_{jk}\Big| \\
	&\ls \sum_{\la/2<|j|\le\frac32\la}\sum_{2\la\le |k|<3\la} ((1+2^{-\ell_0}||j|-\la|)^{-N}+\la^{-N})\la^{-2}\la^{-n+2-\eta}\\
	&\ls \la^{n-2\eta-\eps}.
\end{align*}
Moreover, \eqref{case2.3} easily follows from the fact that $|\psi(j)-\psi(k)|\ls \la^{-N}$, and $|k|^2-|j|^2\ge1$.

In case (iii), it suffices to show
	\begin{equation}\label{case3.1}
	\sum_{\frac34\la<|j|<2\la}\sum_{|k|\le\la/2}\Big|\frac{\psi(j)-\psi(k)}{|j|^2-|k|^2} U_{jk}\Big|\ls \la^{n-2\eta-\eps},\end{equation}
	\begin{equation}\label{case3.2}
	\sum_{\la/2<|j|\le\frac34\la}\sum_{|k|\le\la/2}\Big|\frac{\psi(j)-\psi(k)}{|j|^2-|k|^2} U_{jk}\Big|\ls \la^{-N},\ \forall N.
\end{equation}
To show \eqref{case3.1}, we use \eqref{psi} and the fact that $|j|-|k|\approx \la\approx \la-|k|\approx |j-k|$. So
	\begin{align*}
\sum_{\frac34\la<|j|<2\la}&\sum_{|k|\le\la/2}\Big|\frac{\psi(j)-\psi(k)}{|j|^2-|k|^2} U_{jk}\Big| \\
&\ls \sum_{\frac34\la<|j|<2\la}\sum_{|k|\le\la/2}((1+2^{-\ell_0}||j|-\la|)^{-N}+\la^{-N})\la^{-2}\la^{-n+2-\eta}\\
&\ls \la^{n-2\eta-\eps}.
	\end{align*}
Moreover,  \eqref{case3.2} follows from the fact that $|\psi(j)-\psi(k)|\ls \la^{-N}$ and $|j|^2-|k|^2\ge1$.

In case (iv), we  need to prove
	\begin{equation}\label{case4}
	\sum_{|j|\le\la/2}\sum_{|k|\le\la/2}\Big|\frac{\psi(j)-\psi(k)}{|j|^2-|k|^2} U_{jk}\Big|\ls \la^{-N},\ \forall N,\end{equation}
which easily follows from the fact that \[\Big|\frac{\psi(j)-\psi(k)}{|j|^2-|k|^2}U_{jk}\Big|\ls \la^{-N}.\]
Note that  it is still valid when $|j|=|k|$, by  \eqref{jek}.

 In case (v), we need to show
	\begin{equation}\label{case5}
	\sum_{|j|\le\la/2}\sum_{|k|\ge2\la}\Big|\frac{\psi(j)-\psi(k)}{|j|^2-|k|^2} U_{jk}\Big|\ls \la^{-N},\ \forall N.\end{equation}
It follows from the estimate:
\[\Big|\frac{\psi(j)-\psi(k)}{|j|^2-|k|^2} U_{jk}\Big|\ls \la^{-N}|k|^{-2}|k|^{-n+2-\eta}.\]

In case (vi), it suffices to prove 
	\begin{equation}\label{case6.1}
	\sum_{|j|\ge2\la}\sum_{\frac34|j|\le|k|\le2|j|}\Big|\frac{\psi(j)-\psi(k)}{|j|^2-|k|^2} U_{jk}\Big|\ls \la^{-N},\ \forall N\end{equation}
	\begin{equation}\label{case6.2}
	\sum_{|j|\ge2\la}\sum_{|k|>2|j|}\Big|\frac{\psi(j)-\psi(k)}{|j|^2-|k|^2} U_{jk}\Big|\ls \la^{-N},\ \forall N.\end{equation}
	\begin{equation}\label{case6.3}
	\sum_{|j|\ge2\la}\sum_{2\la\le|k|<\frac34|j|}\Big|\frac{\psi(j)-\psi(k)}{|j|^2-|k|^2} U_{jk}\Big|\ls \la^{-N},\ \forall N.\end{equation}
To prove \eqref{case6.1}, we notice that when $|k|\approx |j|>2\la$, \[\Big|\frac{\psi(j)-\psi(k)}{|j|^2-|k|^2} \Big|\ls |j|^{-N}.\]
It is still valid when $|j|=|k|$, by \eqref{jek}. So we get	\begin{align*}
\sum_{|j|\ge2\la}\sum_{\frac34|j|\le|k|\le2|j|}\Big|\frac{\psi(j)-\psi(k)}{|j|^2-|k|^2} U_{jk}\Big|&\ls\sum_{|j|\ge2\la}\sum_{\frac34|j|\le|k|\le2|j|}|j|^{-N}(1+|j-k|)^{-N}\\
&\ls  \la^{-N}.
\end{align*}
The estimate \eqref{case6.2} follows from the fact that when $|k|>2|j|$
\[\Big|\frac{\psi(j)-\psi(k)}{|j|^2-|k|^2} U_{jk}\Big|\ls |j|^{-N}|k|^{-2}|k|^{-n+2-\eta}.\]
Similarly, we obtain \eqref{case6.3} from the fact that when $|k|<\frac34|j|$,
\[\Big|\frac{\psi(j)-\psi(k)}{|j|^2-|k|^2} U_{jk}\Big|\ls |k|^{-N}|j|^{-2}|j|^{-n+2-\eta}.\]
So we complete all cases and finish the proof of \eqref{claimpsi}.
\[\]
	
\noindent \textbf{Proof of \eqref{Rdiff2}.} Since $\rho$ and $\chi$ are smooth radial  functions satisfying $\rho(0)=\chi(0)$, we may write the difference
		\[V(x)-U(x)=|x|^{-2+\eta}(\rho(x)-\chi(x))=|x|^{-1+\eta}\rho_1(x)\]
		for some radial function $\rho_1\in C_0^\infty(\mathbb{R}^n)$.
		Then by applying  the proof of Lemma \ref{ulemma}, we may obtain
		\begin{equation}\label{vb}
		    		|\hat V(\xi)-\hat U(\xi)|\ls (1+|\xi|)^{-n+1-\eta}.
		\end{equation}
	We claim that  \begin{equation}\label{smoother}|R_1-\tilde R_1|(\la,x)\ls \sum_{j}\sum_{k}\Big|\frac{h(|j|)-h(|k|)}{|j|^2-|k|^2}\Big|(1+|j-k|)^{-n+1-\eta}\ls \la^{n-\eta-\eps+\sigma},\ \forall \sigma>0.
			\end{equation}
	Indeed, since $h(|j|)=\ola(|j|)+\psi(j)$, we may split the sum into two parts, and it suffices to prove that
	\begin{equation}\label{part2} \sum_{j}\sum_{k}\Big|\frac{\psi(j)-\psi(k)}{|j|^2-|k|^2}\Big|(1+|j-k|)^{-n+1-\eta}\ls\la^{n-\eta-\eps+\sigma},\ \forall \sigma>0.
	\end{equation}
	\begin{equation}\label{part1} \sum_{j}\sum_{k}\Big|\frac{\ola(|j|)-\ola(|k|)}{|j|^2-|k|^2}\Big|(1+|j-k|)^{-n+1-\eta}\ls\la^{n-\eta-1}.
	\end{equation}
		
	The first bound trivially follows from \eqref{claimpsi}, since 
\[U_{jk}\approx (1+|j-k|)^{-n+2-\eta}\ge(1+|j-k|)^{-n+1-\eta} .\]
The second bound can be obtained by modifying the proof of \eqref{R1upper}. Indeed, the sum in \eqref{part1} is bounded by
\begin{align*}\sum_{|j|<\la}\sum_{|k|\ge \la}&\frac1{|k|^2-|j|^2}(1+|k-j|)^{-n+1-\eta}\\
	&\ls \sum_{|j|\le\la/2}\sum_{|k|\ge\la}|k|^{-n-\eta-1}+\sum_{|j|<\la}\sum_{|k|\ge2\la}|k|^{-n-\eta-1}\\
	&\qquad \qquad+\sum_{\la/2< |j|<\la}\sum_{\la\le|k|< 2\la}\frac1{|k|^2-|j|^2} (1+|k-j|)^{-n+1-\eta}\\
	&\ls \la^{n-\eta-1}+\sum_{\ell}\sum_{m}\sum_{(j,k)\in S_{\ell m}}\la^{-1}2^{-\ell}2^{(-n+1-\eta)m}.
\end{align*}
When $2^{\ell}<1$, we may slightly refine the estimate \eqref{numj}:
\begin{equation}\label{newnumj}\# J_{\ell m}\ls \la^{n-2+\sigma}\cdot2^{\ell}\la= \la^{n-1+\sigma}2^\ell,\ \forall \sigma>0.\end{equation}
	Here we use the fact that the number of lattice points on the sphere $\la S^{n-1}$ is $O(\la^{n-2+\sigma})$. See e.g. \cite{br}, \cite{hztorus}. Then by using \eqref{numj}, \eqref{numk} and \eqref{newnumj}, we get
	\begin{align*}
\sum_{\ell}&\sum_{m}\sum_{(j,k)\in S_{\ell m}}\la^{-1}2^{-\ell}2^{(-n+1-\eta)m}\\
&\ls \sum_{1\le2^\ell\le\la}\sum_{2^\ell\ls 2^m\ls\la}\la^{-1}2^{-\ell}2^{(-n+1-\eta)m}\cdot \la^{n-1}2^{(n-1)m}2^{2\ell}\\
&\quad\quad +\sum_{\la^{-1}\ls2^\ell<1}\sum_{1\le 2^m\ls\la}\la^{-1}2^{-\ell}2^{(-n+1-\eta)m}\cdot \la^{n-1+\sigma}2^\ell\cdot 2^{(n-1)m}\\
&\ls \la^{n-\eta-1}+\la^{n-2+\sigma},\ \forall \sigma>0.
\end{align*}
	
	 So we complete the proof of Lemma \ref{mainlemma}.

	\section{Proof of Proposition \ref{prop2}}
In this section, we prove the upper bound of $R_2$ in \eqref{upper}. For simplicity, we just assume $V\in L^{\frac n{2-\eta}}(M)$, though actually the original $V(x)=|x|^{-2+\eta}\rho(x)$ is in $L^{\frac n{2-\eta}-\sigma}(M)$, $\forall \sigma>0$.  Applying the same argument to the original $V$ leads to an extra harmless $\la^\sigma$ ($\forall \sigma>0$) factor in the upper bound of $R_2$. As before, we fix a Littlewood-Paley bump function
$\beta\in C^\infty_0((1/2,2))$ satisfying
$$
\sum_{\ell=-\infty}^\infty \beta(2^{-\ell} s)=1, \quad s>0.$$
Let $\ell_0$ be the largest integer such that $2^{\ell_0}\le \la^{1-\eta-\e}$. Let
\begin{equation}\label{beta}\beta_{\ell_0}(w)=\sum_{\ell\le \ell_0} \beta(2^{-\ell}|w|)
	\in C^\infty_0((-2^{\ell_0+1},2^{\ell_0+1})),\end{equation}
as well as 
\begin{equation}\label{beta1}\beta_{\ell}(w)=\beta(2^{-\ell}|w|),\,\,\,\text{for}\,\,\, \ell>\ell_0.
\end{equation}
For $j, k\in \mathbb{Z}^n$ and $\tau\in \R$, let
\begin{equation}\label{2.1}
	f(\tau)=\frac{h(\tau)-h(|j|)}{\tau^2-|j|^2},
\end{equation}
\begin{equation}\label{2.2}
	\begin{aligned}
		a_{jk\tau_\ell}=&\frac{f(|k|)-f(\tau_\ell)}{|k|^2-\tau_\ell^2}\\
		=&\frac{h(|k|)-h(|j|)}{(|k|^2-|j|^2)(|k|^2-\tau_\ell^2)}+\frac{h(\tau_\ell)-h(|j|)}{(\tau_\ell^2-|j|^2)(\tau_\ell^2-|k|^2)}\\
		:=&a_{jk\tau_\ell}^0+a_{jk\tau_\ell}^1.
	\end{aligned}
\end{equation}
\begin{remark}The bounds on $|a_{jk\tau_\ell}|$ will be estimated. When $|\tau_\ell-|k||$ can be small (e.g. $|\tau_\ell-|k||\ls 2^{\ell_0}$, $\tau_\ell\approx |k|$), we always apply the mean value theorem to $f(\tau)$ and treat $a_{jk\tau_\ell}$ as a whole in this case. In all the other cases, $a_{jk\tau_\ell}^0$ and $a_{jk\tau_\ell}^1$ are treated separately. Similarly, when $||k|-|j||$ or $|\tau_\ell-|j||$ can be small, we also apply the mean value theorem to $h(\tau)$.
\end{remark}

Now we rewrite $R_2$ by the following
\begin{equation}\label{2.3}
	\begin{aligned}
		R_2=&\sum_{j, k, \tau_\ell}\int a_{jk\tau_\ell}\,e_j^0(x)V_{jk}\overline{e_k^0(z)}V(z)e_{\tau_\ell}(z) \overline{e_{\tau_\ell}(x)}dz\\
		=&\sum_{\ell_1\ge\ell_0}\sum_{\ell_2\ge\ell_0}\sum_{j, k, \tau_\ell}\beta_{\ell_1}(\tau_\ell-|k|)\beta_{\ell_2}(\tau_\ell-|j|)\\ 
		&\cdot \int a_{jk\tau_\ell}\,e_j^0(x)V_{jk}\overline{e_k^0(z)}V(z)e_{\tau_\ell}(z) \overline{e_{\tau_\ell}(x)}dz,
	\end{aligned}
\end{equation}
where 
\begin{equation}\label{vjkb}|V_{jk}| \ls (1+|j-k|)^{-n+2-\eta}\end{equation} 
by \eqref{vjk} and \eqref{vb}.

\subsection{Some useful lemmas}
	To prove the desired bound for $R_2$, we need the following lemma which separate the contribution of the various components in \eqref{2.3} above.  Fix $p_0=\frac{2n}{n-2+\eta}$. Then Sogge's exponent 
\[\sigma(p_0)=\begin{cases}
	\frac{(n-1)(2-\eta)}{4n},\ \ \ \eta>\frac2{n+1}\\
	\frac{1-\eta}2,\ \ \ \ \ \ \ \ \ \ \ \eta\le \frac2{n+1}\ \ .
\end{cases}\]
Note that for $\eta\in (0,1]$ we always have $\sigma(p_0)\le \frac12-\frac14\eta$.
\begin{lemma}\label{delta1}  Let $I\subset \R_+$ and for
	eigenvalues $\tau_{\ell}\in I$ assume that $\delta_{\tau_{\ell}}
	\in [0,\delta]$.  Then if $m\in C^1(\R_+\times M)$, and $V\in L^{\frac{n}{2-\eta}}(M)$, we have 
	\begin{multline}\label{2.4}
		\int_M \Bigl| \sum_{\tau_{\ell}\in I}m(\delta_{\tau_{\ell}},y) \,
		a_{\ell} V(y)e_{\tau_{\ell}}(y)\Bigr| \, dy
		\\
		\le \|V\|_{L^{\frac{n}{2-\eta}}(M)}\cdot \Bigl(\, \|m(0, \, \cdot\, )\|_{L^{p_0}(M)}
		+\int_0^\delta \bigl\| \tfrac\partial{\partial s}
		m(s, \, \cdot\, )\bigr\|_{L^{p_0}(M)}\, ds
		\, \Bigr)\\ \times  \sup_{s\in [0, \delta]}\bigl\| \sum_{\tau_{\ell}\in I}\1_{[0,\delta_{\tau_{\ell}}]}(s)a_{\ell}e_{\tau_{\ell}}\, \bigr\|_{L^{p_0}(M)}.
	\end{multline}
\end{lemma}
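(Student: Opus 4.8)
\emph{Proof proposal.} The plan is to reduce the estimate to a single application of H\"older's inequality on $M$ with three factors, after using the fundamental theorem of calculus to peel the dependence of $m$ on its first argument off the eigenvalue sum. The key bookkeeping is the exponent identity: with $p_0=\frac{2n}{n-2+\eta}$ one has $\frac{1}{p_0}+\frac{1}{p_0}+\frac{2-\eta}{n}=\frac{n-2+\eta}{2n}+\frac{n-2+\eta}{2n}+\frac{2-\eta}{n}=1$, so $\bigl(p_0,p_0,\tfrac{n}{2-\eta}\bigr)$ is an admissible H\"older triple. The three factors will be, respectively, a factor carrying $m$, a factor carrying the eigenfunction sum $\sum_{\tau_\ell}a_\ell e_{\tau_\ell}$, and the factor $V\in L^{\frac n{2-\eta}}(M)$.

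First I would write, for each $\tau_\ell\in I$ (recall $\delta_{\tau_\ell}\in[0,\delta]$ and $m\in C^1(\R_+\times M)$),
\[ m(\delta_{\tau_\ell},y)=m(0,y)+\int_0^\delta \1_{[0,\delta_{\tau_\ell}]}(s)\,\tfrac{\partial}{\partial s}m(s,y)\,ds, \]
which is legitimate since $\delta_{\tau_\ell}\le\delta$. Substituting this into the sum and using $\1_{[0,\delta_{\tau_\ell}]}(0)=1$, the quantity inside the absolute value on the left of \eqref{2.4} becomes
\[ V(y)\,m(0,y)\sum_{\tau_\ell\in I}\1_{[0,\delta_{\tau_\ell}]}(0)\,a_\ell e_{\tau_\ell}(y)\;+\;V(y)\int_0^\delta \tfrac{\partial}{\partial s}m(s,y)\sum_{\tau_\ell\in I}\1_{[0,\delta_{\tau_\ell}]}(s)\,a_\ell e_{\tau_\ell}(y)\,ds. \]
(In applications the $\tau_\ell$-index set is finite — eigenvalues in a bounded band — so this splitting and the Tonelli step below are justified termwise; alternatively, finiteness of the right-hand side legitimizes the rearrangements.)

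Next I would take absolute values, use the triangle inequality to separate the two terms, and estimate each by H\"older's inequality for the triple $\bigl(p_0,p_0,\tfrac{n}{2-\eta}\bigr)$. The first term is bounded by
\[ \|V\|_{L^{\frac n{2-\eta}}(M)}\,\|m(0,\cdot)\|_{L^{p_0}(M)}\,\Bigl\|\sum_{\tau_\ell\in I}\1_{[0,\delta_{\tau_\ell}]}(0)\,a_\ell e_{\tau_\ell}\Bigr\|_{L^{p_0}(M)}, \]
and the last factor is $\le\sup_{s\in[0,\delta]}\bigl\|\sum_{\tau_\ell\in I}\1_{[0,\delta_{\tau_\ell}]}(s)a_\ell e_{\tau_\ell}\bigr\|_{L^{p_0}(M)}$. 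For the second term I would move the $s$-integral outside by Tonelli (everything nonnegative), apply the same H\"older inequality in $y$ for each fixed $s$ to get
\[ \int_0^\delta \|V\|_{L^{\frac n{2-\eta}}(M)}\,\bigl\|\tfrac{\partial}{\partial s}m(s,\cdot)\bigr\|_{L^{p_0}(M)}\,\Bigl\|\sum_{\tau_\ell\in I}\1_{[0,\delta_{\tau_\ell}]}(s)\,a_\ell e_{\tau_\ell}\Bigr\|_{L^{p_0}(M)}\,ds, \]
and then pull the (now $s$-independent) supremum of the eigenfunction-sum norm out of the $ds$-integral. Adding the two bounds yields \eqref{2.4} exactly.

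I do not expect a serious obstacle here: the only points needing care are the H\"older exponent identity $\frac{2}{p_0}+\frac{2-\eta}{n}=1$ and the fact that $m(0,\cdot)$ and $\partial_s m(s,\cdot)$ lie in $L^{p_0}(M)$, which is automatic because $M$ is compact and $m\in C^1$, so these are continuous hence bounded. The point of the lemma downstream is precisely this decoupling: $V$ is absorbed by $\|V\|_{L^{n/(2-\eta)}}$, the multiplier $m$ (built from $h$ and the pieces $a^0_{jk\tau_\ell},a^1_{jk\tau_\ell}$, together with the Littlewood--Paley cutoffs $\beta_\ell$) contributes an $L^{p_0}$ quantity, and the residual eigenfunction sum will be handled by the spectral projection bounds of Lemma \ref{sogge888} at the exponent $p_0=\frac{2n}{n-2+\eta}$, whose Sogge exponent satisfies $\sigma(p_0)\le\frac12-\frac14\eta$ as recorded just above.
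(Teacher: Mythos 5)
Your proposal is correct and follows essentially the same route as the paper: the fundamental-theorem-of-calculus splitting $m(\delta_{\tau_\ell},y)=m(0,y)+\int_0^\delta \1_{[0,\delta_{\tau_\ell}]}(s)\partial_s m(s,y)\,ds$, H\"older with the exponent identity $\tfrac{2}{p_0}+\tfrac{2-\eta}{n}=1$, and exchanging the $s$-integral with the $y$-integral before pulling out the sup. The paper phrases the interchange as Minkowski's integral inequality followed by a two-step H\"older rather than your Tonelli plus three-factor H\"older, but these are the same estimate.
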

\begin{proof}
	We shall use the fact that
	$$m(\delta_{\tau_{\ell}},y)=m(0,y)+\int_0^{\delta}
	\1_{[0,\delta_{\tau_{\ell}}]}(s)\, \tfrac\partial{\partial s}
	m(s,y) \, ds,$$
	where $\1_{[0,\delta_{\tau_{\ell}}]}(s)$ is the indicator function of the interval $[0,\delta_{\tau_{\ell}}]
	\subset [0,\delta]$.
	Therefore, by H\"older's inequality and Minkowski's inequality,
	the left side of \eqref{2.4} is dominated by $\|V\|_{L^{\frac{n}{2-\eta}}(M)}$ times
	\begin{align*}
		&\big(\int_M \bigl|\, m(0,y)\cdot \sum_{\tau_{\ell}\in I}a_{\ell}
		e_{\tau_{\ell}}(y)\, \bigr|^{\frac{n}{n-2+\eta}} \, dy\big)^{\frac{n-2+\eta}{n}} \\
		+& \big(\int_M \bigl| \, \sum_{\tau_{\ell}\in I}\int_0^\delta
		\1_{[0,\delta_{\tau_{\ell}}]}(s)\tfrac\partial{\partial s}
		m(s,y) a_{\ell} e_{\tau_{\ell}}(y)\, ds \, \bigr|^{\frac{n}{n-2+\eta}} \, dy \big)^{\frac{n-2+\eta}{n}}
		\\
		&\le
		\|m(0,\, \cdot\, )\|_{p_0} \cdot
		\|\sum_{\tau_{\ell}\in I}a_{\ell}e_{\tau_{\ell}}\|_{p_0}
		+\int_0^\delta\bigl( \,
		\bigl\|\tfrac\partial{\partial s}
		m(s,\, \cdot \, )\bigr\|_{p_0} \cdot \bigl\| \sum_{\tau_{\ell}\in I}\1_{[0,\delta_{\tau_{\ell}}]}(s)a_{\ell}e_{\tau_{\ell}}\, \bigr\|_{p_0}\, \bigr)\, ds \\
		&\le
		\|m(0,\, \cdot\, )\|_{p_0} \cdot
		\|\sum_{\tau_{\ell}\in I}a_{\ell}e_{\tau_{\ell}}\|_{p_1}
		+\int_0^\delta \,
		\|\tfrac\partial{\partial s}
		m(s,\, \cdot \, )\|_{p_0} ds \cdot \sup_{s\in [0, \delta]}\bigl\| \sum_{\tau_{\ell}\in I}\1_{[0,\delta_{\tau_{\ell}}]}(s)a_{\ell}e_{\tau_{\ell}}\, \bigr\|_{p_0}\,\,
		\\
		&\le \Bigl(\, \|m(0, \, \cdot\, )\|_{p_0}
		+\int_0^\delta \bigl\| \tfrac\partial{\partial s}
		m(s, \, \cdot\, )\bigr\|_{p_0}\, ds
		\, \Bigr) \times   \sup_{s\in [0, \delta]}\bigl\| \sum_{\tau_{\ell}\in I}\1_{[0,\delta_{\tau_{\ell}}]}(s)a_{\ell}e_{\tau_{\ell}}\, \bigr\|_{p_0}.
	\end{align*}
\end{proof}

When we apply Lemma \ref{delta1} in the following sections, the interval $I$ has left endpoint $\xi$ and length $\delta$. Moreover, $\delta_{\tau_\ell}=\tau_\ell-\xi\in [0,\delta]$, $a_\ell=e_{\tau_\ell}(x)$, and 
\begin{equation}
   m (\delta_{\tau_\ell},y)=\sum_{|k|\in I_1} b_{k,\tau_\ell}e_k^0(y),
\end{equation}
 for some interval $I_1$. 

If $\tau_\ell\le A$, $|k|\le B$, $|b_{k,\tau_\ell}|\le C$, and $|\partial_{\tau_\ell} b_{k,\tau_\ell}|\ls \delta^{-1}C$, then by using Lemma \ref{sogge888}, we get
\[\sup_{s\in [0, \delta]}\bigl\| \sum_{\tau_{\ell}\in I}\1_{[0,\delta_{\tau_{\ell}}]}(s)a_{\ell}e_{\tau_{\ell}}\, \bigr\|_{p_0}\ls A^{\sigma(p_0)}\delta^\frac12(\sum_{\tau_\ell\in I}|e_{\tau_\ell}(x)|^2)^\frac12\ls  A^{\sigma(p_0)}\delta^\frac12A^{\frac{n-1}2}\delta^\frac12,\]
\[\|m(0, \, \cdot\, )\|_{p_0}\ls B^{\sigma(p_0)}|I_1|^\frac12(\sum_{|k|\in I_1}|b_{k,\tau_\ell}|^2)^\frac12\ls B^{\sigma(p_0)}|I_1|^\frac12B^{\frac{n-1}2}|I_1|^\frac12C,\]
and similarly
\[\int_0^\delta \bigl\| \tfrac\partial{\partial s}
m(s, \, \cdot\, )\bigr\|_{p_0}\ls B^{\sigma(p_0)}|I_1|^\frac12B^{\frac{n-1}2}|I_1|^\frac12C, \]
where $|I_1|$ is the length of the interval $I_1$.
We will apply Lemma \ref{delta1} with these types of estimates many times. 

The following useful lemma on the smooth function of $P^0$, which will be used several times later. Indeed, only the case $n+\mu<0$ is used in the paper, though we discuss three cases here for completeness.
\begin{lemma}\label{pdo}
	Let $\mu\in \mathbb{R}$, and $m\in C^\infty(\mathbb{R})$ belong to the symbol class $S^\mu$, that is, assume that
	\[|\partial_t^{\alpha} m(t)| \leq C_{\alpha}(1+|t|)^{\mu-\alpha}, \quad \forall \alpha.\]
	Then $m(P^0)$ is a pseudo-differential operator of order $\mu$. Moreover, if $R\ge1$, then the kernel of the operator $m(P^0/R)$ satisfies for all $N\in\mathbb{N}$
	\begin{equation}\label{withla}
		|m(P^0/R)(x,y)|\\\le  \begin{cases}CR^n\big(R d_g(x,y)\big)^{-n-\mu}\big(1+R d_g(x,y)\big )^{-N},\,\quad\quad\,\,\,\, \ n+\mu>0\\
			CR^n\log(2+(Rd_g(x,y))^{-1})\big(1+R d_g(x,y)\big)^{-N},\,\, \ n+\mu=0\\
			CR^n(1+Rd_g(x,y))^{-N},\ \ \ \ \ \quad\quad\quad\quad\quad\quad\quad\quad\ \  n+\mu<0.
		\end{cases}
	\end{equation}
\end{lemma}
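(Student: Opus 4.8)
The plan is to reduce everything to the Euclidean case via a parametrix for functions of $P^0=\sqrt{-\Delta_g}$, and then to estimate the resulting oscillatory integral by nonstationary phase. First I would recall that by the Hörmander-type functional calculus (or by writing $m(P^0/R)$ via the wave group $e^{itP^0}$ using $m(P^0/R)=\tfrac1{2\pi}\int \widehat{m}(s)\,e^{i(sR)P^0/R}\cdots$ — more precisely, $m(P^0/R)=\tfrac1{2\pi}\int \widehat{m}(t)\,e^{itP^0/R}\,dt$), one knows that $m(P^0)$ is a classical pseudodifferential operator of order $\mu$ whenever $m\in S^\mu$; this is standard (see e.g. \cite{fio}, \cite{SoggeHangzhou}). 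For the quantitative kernel bound, I would localize: write $m(P^0/R)(x,y)$, and since $\widehat m$ is rapidly decreasing away from $0$ modulo the contribution of the singular support, split the $t$-integral into $|t|\lesssim \e_0$ (small, less than half the injectivity radius) and $|t|\gtrsim \e_0$. On the range $|t|\gtrsim\e_0$, the rapid decay of all derivatives of $\widehat m$ — combined with the fact that on a compact manifold $e^{itP^0}$ is bounded on all Sobolev spaces with polynomially controlled norms — gives a contribution bounded by $C_N R^{-N}$ for every $N$, hence is harmless relative to the claimed bounds when $Rd_g(x,y)\lesssim 1$, and is absorbed into the $(1+Rd_g(x,y))^{-N}$ factor when $Rd_g(x,y)\gtrsim 1$.

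Next, on the range $|t|\lesssim \e_0$ I would use the Lax parametrix for the half-wave operator: in geodesic normal coordinates centered at $y$,
\[
e^{itP^0/R}(x,y)=(2\pi)^{-n}\int_{\mathbb{R}^n} e^{i\big(\langle x-y,\zeta\rangle + t|\zeta|/R\big)}\, q(t/R,x,y,\zeta)\, d\zeta + (\text{smoothing}),
\]
with $q$ a symbol of order $0$, $q(0,y,y,\zeta)=1$. Substituting and carrying out the $t$-integral against $\widehat m(t)$ reproduces $m(|\zeta|/R)$ up to lower-order symbols, so that after rescaling $\zeta = R\omega$,
\[
m(P^0/R)(x,y) = \frac{R^n}{(2\pi)^n}\int_{\mathbb{R}^n} e^{iR\langle x-y,\omega\rangle}\, \widetilde m(\omega)\, \widetilde q(x,y,\omega)\, d\omega + O(R^{-N}),
\]
where $\widetilde m\in S^\mu$ is essentially $m$ and $\widetilde q\in S^0$ with all seminorms uniform. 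Now the three cases are exactly the three cases of the standard stationary/nonstationary-phase estimate for the Fourier transform of a symbol of order $\mu$: when $n+\mu<0$ the integrand is integrable and integration by parts in $\omega$ (each application producing a factor $(R d_g(x,y))^{-1}$ off the gradient of the phase, legitimate because the amplitude is Schwartz-like in the relevant regime) yields $R^n(1+Rd_g(x,y))^{-N}$; when $n+\mu\ge 0$ one instead dyadically decomposes in $|\omega|$, estimates each piece trivially plus by repeated integration by parts, and sums the geometric series, which produces the homogeneous factor $(Rd_g(x,y))^{-n-\mu}$ (respectively the logarithm when $n+\mu=0$, from the borderline dyadic sum), again with the rapid-decay tail $(1+Rd_g(x,y))^{-N}$.

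The main obstacle, and the step requiring the most care, is controlling the parametrix and the error terms \emph{uniformly in $R\ge 1$}: the Lax construction is a local (small-$t$) statement, and one must check that after the rescaling $\zeta=R\omega$ the amplitude $\widetilde q(x,y,\omega)$ and all the lower-order correction symbols remain in bounded subsets of $S^0$ (resp. $S^{\mu}$) with constants independent of $R$, and that the smoothing remainders genuinely contribute $O(R^{-N})$ and not merely $O(1)$. This is where the compactness of $M$ enters decisively — it gives uniform bounds on the injectivity radius, on the metric coefficients in normal coordinates, and on all the Sobolev operator norms of $e^{itP^0}$ for $|t|\le \e_0$ — so that the finitely many coordinate patches can be handled with a single set of constants. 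Once that uniformity is in hand, the oscillatory-integral estimates above are routine and deliver \eqref{withla}.
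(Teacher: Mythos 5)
Your proposal is correct and follows essentially the same route the paper takes: the paper does not prove Lemma \ref{pdo} but cites \cite[Lemma 4]{hswz}, \cite[Theorem 4.3.1]{fio}, \cite[Prop.~1, p.~241]{steinbook} and sketches exactly your argument, namely that via the wave-group/Lax-parametrix representation the kernel equals, modulo lower-order and $O(R^{-N})$ errors, the Euclidean oscillatory integral $(2\pi)^{-n}\int m(|\xi|/R)e^{id_g(x,y)\xi_1}d\xi$ near the diagonal, whose dyadic/integration-by-parts estimates yield the three cases of \eqref{withla}, with $O(R^{-N})$ off the diagonal. Your attention to uniformity in $R$ after rescaling is the right point of care and is consistent with the cited proofs.
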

We mean that the estimates hold near the diagonal (so that $d_g(x,y)$ is well-defined) and that   outside the neighborhood of the diagonal we have $|m(P^0/R)(x,y)|\ls R^{-N},\ \forall N$. See \cite[Lemma 4]{hswz}, \cite[Theorem 4.3.1]{fio}, \cite[Prop.1 on page 241]{steinbook} for the proof. Roughly speaking, modulo lower order terms, $m(P^0/R)(x, y)$  equals
\[(2\pi)^{-n}\int_{\mathbb{R}^n}m(|\xi|/R)e^{i d_g(x,y)\xi_1}d\xi\]
near the diagonal, which satisfies the bounds in \eqref{withla}, while outside of a fixed neighborhood of the diagonal  $m(P^0/R)(x, y)$ is $O(R^{-N})$.

\subsection{Outline for the proof of Proposition \ref{prop2}}

We will consider $\tau_\ell\le2\la$ (low-frequency terms) and $\tau_\ell>2\la$ (high-frequency terms) seperately, and prove the ``low-frequency estimates'' 
	\begin{equation}\label{Low}
	\Big|\sum_{j,k}\sum_{\tau_\ell\le 2\la}
	\int a_{jk\tau_\ell}\,e_j^0(x)V_{jk}\overline{e_k^0(z)}V(z)e_{\tau_\ell}(z) \overline{e_{\tau_\ell}(x)}dz\Big|
	\ls \la^{n-\frac32\eta}(\log\la)^3
\end{equation}
and the ``high-frequency estimates'' 
	\begin{equation}\label{high}
	\Big|\sum_{j,k}\sum_{\tau_\ell> 2\la}
	\int a_{jk\tau_\ell}\,e_j^0(x)V_{jk}\overline{e_k^0(z)}V(z)e_{\tau_\ell}(z) \overline{e_{\tau_\ell}(x)}dz\Big|
	\ls \la^{n-\frac32\eta}\log\la.
\end{equation}
These estimates immediately imply Proposition \ref{prop2}. 

In the next two sections, we will prove these two estimates. We need to split the sum over three different types of frequencies ($|j|, |k|, \tau_\ell$) in a reasonable way, 
so that within each case, the size of $a_{jk\tau_\ell}$ is essentially fixed, and we also have the explicit information on the spectral intervals that $|j|, |k|$ and $\tau_\ell$ belong to. This allows us to analyze each case efficiently by applying  Lemma \ref{sogge888} (spectral projection bounds) and Corollary \ref{roub} (rough eigenfunction bounds) plus one of the following three key lemmas:  \textbf{Lemma \ref{delta1}} (H\"older+Minkowski),  \textbf{Lemma \ref{pdo}} (kernel estimates for pseudo-differential operators) and \textbf{Lemma \ref{heatk}} (heat kernel bounds). 
For the reader's convenience, we list the key lemma for each case here.

\textbf{Section 4: Low-frequency estimates}
\begin{enumerate}[$\bullet$]
	\item \textbf{Case 1: Lemma \ref{delta1}}
 
 In this case, we have $\tau_\ell$, $|j|$, $|k|\ls \la$. Thus, after further dividing into sub-cases depending on the sizes of $\tau-|k|$ and $\tau-|j|$ so that within the subcases the size of $a_{jk\tau_\ell}$ is essentially fixed,  this allows us to use Lemma 3.2 and the $L^2 \rightarrow L^p$ type estimates (Lemma \ref{sogge888}+ Corollary \ref{roub}) to get the desired bounds.
	\item \textbf{Case 2: Lemma \ref{delta1}}

  In this case, we have $\tau_\ell$, $|k|\ls \la$, but $|j|$ may be unbounded. However, the coefficients $V_{jk}$ and $a_{jk\tau_\ell}$ decay for large $|j|$, so we can use the strategy in Case 1.  
  
	\item \textbf{Case 3: Lemma \ref{delta1}}
 
 In this case,  we have $\tau_\ell$, $|j|\ls \la $, but $|k|$ may be unbounded. The strategy is almost the same as Case 2.
 
	\item \textbf{Case 4: Lemma \ref{pdo}}
 
In this case, we have $\tau_\ell\ls \la$, but both $|k|, |j|$ may be unbounded. When both $|k|, |j|$ are large and $|k|\approx |j|$, the strategy in Case 2 and 3 may not work since $V_{jk}$
 does not  decay. So in this case, we shall use Lemma 3.3, which involves the kernel estimates of  pseudo-differential operators to treat the sum in $j, k$.

\end{enumerate}

\textbf{Section 5: High-frequency estimates}

In the high-frequency case, we mainly divide our discussion into the following three cases:
$\tau_\ell\approx |k|$, $\tau_\ell\ls |k|$, and $\tau_\ell\gs |k|$. The first two cases can be handled in the same way as the low-frequency case by using the decay properties of $h(\tau)$ for large $\tau$ as in \eqref{hda}. More explicitly, 
\begin{enumerate}[$\bullet$]
	\item \textbf{Case 1: Lemma \ref{delta1}}
	
	In this case, we have $\tau_\ell, |k|>1.5\la$, so the coefficients $a_{jk\tau_\ell}$ rapidly decay for large $|j|$. This allows us to use Lemma \ref{delta1}.

	\item \textbf{Case 2.1: Lemma \ref{delta1}}
	
	In this case, we have $|k|>1.1\tau_\ell>2.2\la$ and $|j|\ls \tau_\ell$. The coefficients $a_{jk\tau_\ell}$ rapidly decay for large $|j|$. So we can still apply Lemma \ref{delta1}.

	\item \textbf{Case 2.2: Lemma \ref{pdo}}
	
	In this case, we have $|j|,|k|\gs\tau_\ell$, so $|k|, |j|$ may be unbounded. When both $|k|, |j|$ are large and $|k|\approx |j|$, the strategy above may not work since $V_{jk}$ does not decay. So in this case, we shall use Lemma 3.3, which involves the kernel estimates of  pseudo-differential operators to treat the sum in $j, k$.
\end{enumerate}
 The third case is more complicated. We first handle the terms involving $a_{jk\tau_\ell}^0$ by using the relation 
 \[\frac1{|k|^2-\tau_\ell^2}=\int_0^\infty e^{-t(|k|^2-\tau_\ell^2)}dt\]
 to split the sum into three subcases (see \eqref{hde}). This relation allows us to apply the heat kernel bound in Lemma \ref{heatk}.
\begin{enumerate}[$\bullet$]
	\item \textbf{Case 3.1.1: Lemma \ref{heatk}}
	\item \textbf{Case 3.1.2: Lemma \ref{delta1}}
	\item \textbf{Case 3.1.3: Lemma \ref{delta1}}
\end{enumerate}
And then we handle the terms involving $a_{jk\tau_\ell}^1$. We write for $N=1,2,...$,
$$
	\frac{1}{\tau_\ell^2-|j|^2}=\tau_\ell^{-2}
	+\tau_\ell^{-2}\bigl(|j|/\tau_\ell\bigr)^2+
	\cdots + \tau_\ell^{-2}\bigl(|j|/\tau_\ell\bigr)^{2N-2}
	\\
	+(|j|/\tau_\ell)^{2N}\frac{1}{\tau_\ell^2-|j|^2}
$$
and similarly
$$
	\frac{1}{\tau_\ell^2-|k|^2}=\tau_\ell^{-2}
	+\tau_\ell^{-2}\bigl(|k|/\tau_\ell\bigr)^2+
	\cdots + \tau_\ell^{-2}\bigl(|k|/\tau_\ell\bigr)^{2N-2}
	\\
	+(|k|/\tau_\ell)^{2N}\frac{1}{\tau_\ell^2-|k|^2}.
$$
Then we split their product into three parts
\begin{align*}
	\frac{1}{(\tau_\ell^2-|j|^2)(\tau_\ell^2-|k|^2)}=\frac{(|j|/\tau_\ell)^{2N}}{(\tau_\ell^2-|j|^2)(\tau_\ell^2-|k|^2)}+\sum_{\mu=0}^{N-1}\frac{|j|^{2\mu}(|k|/\tau_\ell)^{2N}}{\tau_\ell^{2+2\mu}(\tau_\ell^2-|k|^2)}+\sum_{\mu_1=0}^{N-1}\sum_{\mu_2=0}^{N-1}\frac{|j|^{2\mu_2}|k|^{2\mu_1}}{\tau_\ell^{2(\mu_1+\mu_2+2)}}
\end{align*}	
The terms involving the first two parts are handled in the following two subcases
\begin{enumerate}[$\bullet$]
	\item \textbf{Case 3.2.1: Lemma \ref{delta1}}
	\item \textbf{Case 3.2.2: Lemma \ref{delta1}}
\end{enumerate}
We handle the third part by using the relation
$$\tau_\ell^{-2(\mu_1+\mu_2+2)} =\int_0^\infty t^{\mu_1+\mu_2+1}e^{-t\tau_k^2}dt$$ to split the sum into three subcases Case 3.2.3a-c (see \eqref{hde1}). This relation allows us to use the heat kernel bound in Lemma \ref{heatk}.
\begin{enumerate}[$\bullet$]
	\item \textbf{Case 3.2.3a: Lemma \ref{heatk}}
	\item \textbf{Case 3.2.3b: Lemma \ref{delta1}}
	\item \textbf{Case 3.2.3c: Lemma \ref{delta1}}
\end{enumerate}
One may observe that most of the cases are handled by Lemma \ref{delta1}. These cases are relatively straightforward, since we just need to estimate the coefficients like $a_{jk\tau_\ell}$ and $b_{k,\tau_\ell}$, and then plug the bounds into Lemma \ref{delta1}.
Moreover, Case 4 and Case 2.2 are very similar and handled by Lemma \ref{pdo}. In these two cases,  $|j|$ and $|k|$ are much larger than $\tau_\ell$, so we split 
	\[a^1_{jk\tau_\ell}=\frac{h(\tau_\ell)}{(\tau_\ell^2-|j|^2)(\tau_\ell^2-|k|^2)}-\frac{h(|j|)}{(\tau_\ell^2-|j|^2)(\tau_\ell^2-|k|^2)}\] and sum over $k$ and $j$ seperately to obtain pseudo-differential operators with kernels like
	\[m(P^0)(x,y)=\sum_{j}m(|j|)e_j^0(x)\overline{e_j^0(y)},\ \ m\in C^\infty(\mathbb{R}).\]And the way to deal with $a^0_{jk\tau_\ell}$ is similar. Furthermore, Case 3.1.1 and Case 3.2.3a are  handled by Lemma \ref{heatk}. These correspond to the cases where $\tau_\ell$ is large, where, if we proceed as before by applying Lemma \ref{delta1}, we would get bounds that are much bigger than desired.
 Instead, in these two cases, we shall use the following relation between the resolvent kernel and the heat kernel
  \[\sum_{\tau_\ell>2\la}\tau_\ell^{-2}e_{\tau_\ell}(z)\overline{e_{\tau_\ell}(x)}=\sum_{\tau_\ell>2\la}\int_0^\infty e^{-t\tau_\ell^2}e_{\tau_\ell}(z)\overline{e_{\tau_\ell}(x)} \,dt.\]
 And we control the ```truncated heat kernel'' by the heat kernel and a finite sum:
\[\sum_{\tau_\ell>2\la}e^{-t\tau_\ell^2}e_{\tau_\ell}(z)\overline{e_{\tau_\ell}(x)}=e^{-tH_V}(z,x)-\sum_{\tau_\ell\le2\la}e^{-t\tau_\ell^2}e_{\tau_\ell}(z)\overline{e_{\tau_\ell}(x)},\]
where Lemma \ref{heatk} gives us good bounds for the first term, and the second term, which is a finite sum,  can be easily handled by H\"older and Sobolev inequalities.

	\section{Low-frequency estimates}
	
	In this section, we shall handle the low frequencies  $1\le\tau_\ell\le2\la$. Our goal is to show that 
	\begin{equation}\label{L.1}
		\Big|\sum_{j,k}\sum_{\tau_\ell\le 2\la}
	\int a_{jk\tau_\ell}\,e_j^0(x)V_{jk}\overline{e_k^0(z)}V(z)e_{\tau_\ell}(z) \overline{e_{\tau_\ell}(x)}dz\Big|
		\ls \la^{n-\frac32\eta}(\log\la)^3,
	\end{equation}
	for $a_{jk\tau_\ell}$ defined as in \eqref{2.2}.
Recall that in \eqref{2.3}, we can insert smooth cut-off functions and rewrite the sum as
\begin{equation}\label{lowmain}
	\begin{aligned}
\sum_{\ell_1\ge\ell_0}\sum_{\ell_2\ge\ell_0}\sum_{j, k}\sum_ {\tau_\ell\le2\la}\beta_{\ell_1}(\tau_\ell-|k|)\beta_{\ell_2}(\tau_\ell-|j|)\cdot \int a_{jk\tau_\ell}\,e_j^0(x)V_{jk}\overline{e_k^0(z)}V(z)e_{\tau_\ell}(z) \overline{e_{\tau_\ell}(x)}dz.
	\end{aligned}
\end{equation}
	
	We shall divide our discussion into the following four cases:
\begin{enumerate}[$\bullet$]
	\item \textbf{Case 1}: $2^{\ell_1}\le 100\la,\ 2^{\ell_2}\le 2^{\ell_1+2}$ 
	\item \textbf{Case 2}: $2^{\ell_1}\le100\la,2^{\ell_2}>2^{\ell_1+2}$
	\item \textbf{Case 3}: $2^{\ell_1}>100\la,2^{\ell_2}\le10\la$
	\item \textbf{Case 4}: $2^{\ell_1}>100\la,2^{\ell_2}>10\la$.
	\end{enumerate}
Recall that the key lemma to handle the first three cases is Lemma \ref{delta1} (H\"older+Minkowski), while the fourth case is handled by Lemma \ref{pdo} (kernel estimates for pseudo-differential operators). Meanwhile, Lemma \ref{sogge888} (spectral projection bounds) and Corollary \ref{roub} (rough eigenfunction bounds) are basic tools applied to all the cases.

	\subsection*{\textbf{Case 1}} $2^{\ell_1}\le 100\la,\ 2^{\ell_2}\le 2^{\ell_1+2}$.
	
	It suffices to estimate
	\begin{equation}\label{eq1.2}
		\begin{aligned}\sum_{j, k, \tau_\ell}\beta_{\ell_1}&(\tau_\ell-|k|)\beta_{\ell_2}(\tau_\ell-|j|) \int a_{jk\tau_\ell}\,e_j^0(x)V_{jk}\overline{e_k^0(z)}V(z)e_{\tau_\ell}(z) \overline{e_{\tau_\ell}(x)}dz
		\end{aligned}
	\end{equation}
	for each integer $\ell_1$ and $\ell_2$ satisfying $2^{\ell_1}\le 100\la,\ 2^{\ell_2}\le 2^{\ell_1+2}$.
	
	Let $\tau_\ell\approx 2^s\le 2\la$, $s=0,1,2,...$. If $2^s>10\cdot 2^{\ell_0}$, we divide the range $[2^{s-1},2^s]$ into intervals of length $2^{\ell_0}$. Then the interval $[1,2\la]$ is divided into a collection of intervals $\{ I_{s,\nu}\}$, $2^s\le 2\la$, $\nu=0,1,2,...$. 
	If $2^s>10\cdot 2^{\ell_0}$, then the length $|I_{s,\nu}|\approx 2^{\ell_0}$ and $\nu\ls 2^s2^{-\ell_0}$. Otherwise, we have $|I_{s,\nu}|\approx 2^s\ls 2^{\ell_0}$ and $\nu=0$.
	
	We consider two subcases seperately: (i) $\ell_1>\ell_0$ (ii) $\ell_1=\ell_0.$ When $\ell_1>\ell_0$, we have $|\tau_\ell-|k||\approx 2^{\ell_1}$ so we just need to estimate $a^0_{jk\tau_\ell}$ and $a^1_{jk\tau_\ell}$ seperately. However, $|\tau_\ell-|k||$ can be zero when $\ell_1=\ell_0$, so we apply the mean value theorem and treat $a_{jk\tau_\ell}$ as a whole in this case.

	\subsection*{(i)} $\ell_1>\ell_0$.
	
	In this case, $|\tau_\ell-|k||\approx 2^{\ell_1}$ and $|\tau_\ell-|j||\ls 2^{\ell_2}$.
	Recall that \begin{align*}
		a_{jk\tau_\ell}=&\frac{h(|k|)-h(|j|)}{(|k|^2-|j|^2)(|k|^2-\tau_\ell^2)}+\frac{h(\tau_\ell)-h(|j|)}{(\tau_\ell^2-|j|^2)(\tau_\ell^2-|k|^2)}\\
		=&a_{jk\tau_\ell}^0+a_{jk\tau_\ell}^1.
	\end{align*} We handle these two terms separately. 
	
	First, we deal with $a_{jk\tau_\ell}^0$. We just need to estimate
	\begin{equation}\label{eq1.3}
		\begin{aligned}\sum_{j, k, \tau_\ell}\beta_{\ell_1}&(\tau_\ell-|k|)\beta_{\ell_2}(\tau_\ell-|j|)\beta_{\ell_3}(|j|-|k|) \int a_{jk\tau_\ell}^0\,e_j^0(x)V_{jk}\overline{e_k^0(z)}V(z)e_{\tau_\ell}(z) \overline{e_{\tau_\ell}(x)}dz
		\end{aligned}
	\end{equation}
	for each integer $\ell_3$ satisfying $2^{\ell_0}\le2^{\ell_3}\ls \la$, as there are  only $\log\la$ terms in the sum over $\ell_3$. 
	If $|k|+|j|> \la/2$ and $|k|+\tau_\ell> \la/20$, then \[|a_{jk\tau_\ell}^0|\ls \la^{-2}2^{-\ell_1}2^{-\ell_3}.\]
	If $|k|+|j|> \la/2$ and $|k|+\tau_\ell\le \la/20$, then $2^{\ell_1}\approx 2^{\ell_2}\approx 2^{\ell_3}\approx \la$,  and
	\[|a_{jk\tau_\ell}^0|\ls \la^{-4}.\]
	If $|k|+|j| \le\la/2$, then \[|a_{jk\tau_\ell}^0|\ls 2^{-\ell_1}2^{-\ell_3}\la^{-N},\ \forall N.\]
	So we always have
	\[|a_{jk\tau_\ell}^0|\ls \la^{-2}2^{-\ell_1}2^{-\ell_3}\]
	and similarly
	\[|\partial_{\tau_\ell}a_{jk\tau_\ell}^0|\ls \la^{-2}2^{-2\ell_1}2^{-\ell_3}.\]

	In the following, we apply Lemma \ref{delta1}. Fix an interval $I_{s,\nu}$ with left endpoint $\xi$. Let $\tau_\ell \in I_{s,\nu}$. Since $|\tau_\ell-|k||\approx 2^{\ell_1}$, the range of $|k|$ is an interval $I_{s,\nu}'$ of length $\approx 2^{\ell_1}$. Let 
	\[b_{k,\tau_\ell}=\sum_j \beta_{\ell_1}(\tau_\ell-|k|)\beta_{\ell_2}(\tau_\ell-|j|)\beta_{\ell_3}(|j|-|k|) a_{jk\tau_\ell}^0\,e_j^0(x)V_{jk}.\]
	Note that for fixed $k$, the number of $j\in \mathbb{Z}^n$ satisfying $1+|j-k|\approx 2^m$ and $\big||j|-|k|\big|\ls 2^{\ell_3}$ is bounded by $2^{(n-1)m}2^{\ell_3}$. Here $m\in\mathbb{N}$ and $\ell_3\ge\ell_0$. Recall the bound \eqref{vjkb} on $V_{jk}$. Then \begin{align*}|b_{k,\tau_\ell}|&\ls \sum_{2^m\ls \la}\la^{-2}2^{-\ell_1}2^{-\ell_3}2^{(-n+2-\eta)m}2^{(n-1)m}2^{\ell_3}\\
		&\ls \la^{-1-\eta}2^{-\ell_1}
	\end{align*}
	and similarly
	\[|\partial_{\tau_\ell}b_{k,\tau_\ell}|\ls \la^{-1-\eta}2^{-\ell_1}2^{-\ell_0}.\]
By using the spectral projection bounds in Lemma \ref{sogge888}, we obtain
	\begin{align*}
		\|\sum_{|k|\in I^\prime_{s,\nu}} b_{k,\xi}e_k^0(\cdot)\|&_{L^{p_0}(M)}\\ &\ls 2^{\ell_1/2} \la^{\sigma(p_0)} \|\sum_{|k|\in I^\prime_{ s,\nu}} b_{k,\xi}e_k^0(\cdot)\|_{L^{2}(M)} \\
		&\ls 2^{\ell_1/2} \la^{\sigma(p_0)}2^{\ell_1/2} \la^{\frac{n-1}{2}}\cdot \la^{-1-\eta}2^{-\ell_1}\\
		&\ls \la^{\frac n2-1-\frac54\eta}.
	\end{align*}
	And similarly for $\tau\in I_{s,\nu}$
	\[\|\sum_{|k|\in I^\prime_{s,\nu}}\partial_{\tau}  b_{k,\tau}e_k^0(\cdot)\|_{L^{p_0}(M)}\ls 2^{-\ell_0}\la^{\frac n2-1-\frac54\eta}.
	\]
	By applying Lemma~\ref{delta1} with $\delta=2^{\ell_0}$, we have 
	\begin{equation}\label{eq1.4}
		\begin{aligned}
			\sum_\nu\Big|\int \sum_{\tau_\ell\in I_{s,\nu}}\sum_{k} &b_{k,\tau_\ell} \overline{e_k^0(z)}V(z)e_{\tau_\ell}(z) \overline{e_{\tau_\ell}(x)}dz \Big|\\
			\ls & \|V\|_{L^{\frac{n}{2-\eta}}(M)}\la^{\frac n2-1-\frac54\eta}\cdot 2^{\ell_0/2}\la^{\sigma(p_0)}\sum_\nu ( \sum_{\tau_\ell\in I_{s,\nu}}|{e_{\tau_\ell}(x)}|^2 )^{1/2}  \\
			\ls &\|V\|_{L^{\frac{n}{2-\eta}}(M)} \la^{\frac {n-1}2-\frac32\eta}\cdot 2^{\ell_0/2} (2^{ns/2}+2^{\frac{n-1}2s}2^{\ell_0/2}\cdot 2^{s}2^{-\ell_0}).
		\end{aligned}
	\end{equation}
	Here in the last line we used the fact that 
\begin{equation}\label{etaubound}
\sum_\nu ( \sum_{\tau_\ell\in I_{s,\nu}}|{e_{\tau_\ell}(x)}|^2 )^{1/2} \ls
    \begin{cases}
2^{ns/2}, \,\,\,\text{if}\,\, 2^s\ls 2^{\ell_0}\\
2^{\frac{n-1}2s}2^{\ell_0/2}\cdot 2^{s}2^{-\ell_0}\,\,\,\text{if}\,\, 2^s\gs 2^{\ell_0},
    \end{cases}
\end{equation}
which is a consequence of Lemma~\ref{sogge888} at $p=\infty$, along with the fact that $\# \{\nu\}\ls 1$, if $ 2^s\ls 2^{\ell_0}$ and $\# \{\nu\}\ls 2^{s}2^{-\ell_0}$, if $ 2^s\gs 2^{\ell_0}$.

Since $2^{\ell_1}\le 100\la$, $2^{\ell_2}\le 2^{\ell_1+2}$ and $2^s\le 2\la$,
 		summing over all possible choices of $s,\ell_1, \ell_2$, we get the desired bound $\la^{n-\frac32\eta}(\log\la)^3$. 

  \vspace{2mm}
	Next, we deal with  $a_{jk\tau_\ell}^1$. We just need to estimate
	\begin{equation}\label{eq1.3'}
		\begin{aligned}\sum_{j, k, \tau_\ell}\beta_{\ell_1}&(\tau_\ell-|k|)\beta_{\ell_2}(\tau_\ell-|j|) \int a_{jk\tau_\ell}^1\,e_j^0(x)V_{jk}\overline{e_k^0(z)}V(z)e_{\tau_\ell}(z) \overline{e_{\tau_\ell}(x)}dz
		\end{aligned}
	\end{equation}
	If $\tau_\ell+|j|>\la/2$ and $\tau_\ell+|k|>\la/20$, then
	\[|a_{jk\tau_\ell}^1|\ls \la^{-2}2^{-\ell_1}2^{-\ell_2}.\]
	If $\tau_\ell+|j|>\la/2$ and $\tau_\ell+|k|\le\la/20$, then $2^{\ell_1}\approx 2^{\ell_2}\approx \la$, and
	\[|a_{jk\tau_\ell}^1|\ls \la^{-4}.\]
	If  $\tau_\ell+|j|\le\la/2$, then
	\[|a_{jk\tau_\ell}^1|\ls 2^{-\ell_1}2^{-\ell_2}\la^{-N},\ \forall N.\]
	So we always have
	\[|a_{jk\tau_\ell}^1|\ls \la^{-2}2^{-\ell_1}2^{-\ell_2}.\]
	and similarly
	\[|\partial_{\tau_\ell}a_{jk\tau_\ell}^1|\ls \la^{-2}2^{-\ell_1}2^{-\ell_2}2^{-\ell_0}.\]
	Let 
	\[b_{k,\tau_\ell}=\sum_j \beta_{\ell_1}(\tau_\ell-|k|)\beta_{\ell_2}(\tau_\ell-|j|) a_{jk\tau_\ell}^1\,e_j^0(x)V_{jk}.\]
	Note that  for fixed $k$ and $\tau_\ell$, the number of $j\in \mathbb{Z}^n$ satisfying $1+|j-k|\approx 2^m$ and $\big|\tau_\ell-|j|\big|\ls 2^{\ell_2}$ is bounded by $2^{(n-1)m}2^{\ell_2}$. Here $m\in\mathbb{N}$ and $\ell_2\ge\ell_0$. Recall the bound \eqref{vjkb} on $V_{jk}$. Then \begin{align*}|b_{k,\tau_\ell}|&\ls \sum_{2^m\ls \la}\la^{-2}2^{-\ell_1}2^{-\ell_2}2^{(-n+2-\eta)m}2^{(n-1)m}2^{\ell_2}\\
		&\ls \la^{-1-\eta}2^{-\ell_1}
	\end{align*}
	and similarly
	\[|\partial_{\tau_\ell}b_{k,\tau_\ell}|\ls \la^{-1-\eta}2^{-\ell_1}2^{-\ell_0}.\]
	By using the spectral projection bounds in Lemma \ref{sogge888}, we get
	\begin{align*}
		\|\sum_{|k|\in I^\prime_{s,\nu}} b_{k,\xi}e_k^0(\cdot)\|&_{L^{p_0}(M)}\\ &\ls 2^{\ell_1/2} \la^{\sigma(p_0)} \|\sum_{|k|\in I^\prime_{s,\nu}} b_{k,\xi}e_k^0(\cdot)\|_{L^{2}(M)} \\
		&\ls 2^{\ell_1/2} \la^{\sigma(p_0)}2^{\ell_1/2} \la^{\frac{n-1}{2}} \la^{-1-\eta}2^{-\ell_1}\\
		&\ls \la^{\frac{n}{2}-1-\frac54\eta}
	\end{align*}
	and similarly for $\tau\in I_{s,\nu}$
	\[\|\sum_{|k|\in I^\prime_{s,\nu}}\partial_{\tau}  b_{k,\tau}e_k^0(\cdot)\|_{L^{p_0}(M)}\ls 2^{-\ell_0}\la^{\frac{n}{2}-1-\frac54\eta}.
	\]
	By applying Lemma~\ref{delta1} with $\delta=2^{\ell_0}$, we have
	\begin{equation}\label{eq1.4'}
		\begin{aligned}
			\sum_\nu|\int \sum_{\tau_\ell\in I_{s,\nu}}\sum_{k} &b_{k,\tau_\ell} \overline{e_k^0(z)}V(z)e_{\tau_\ell}(z) \overline{e_{\tau_\ell}(x)}dz |\\
			\ls & \|V\|_{L^{\frac{n}{2-\eta}}(M)}\la^{\frac{n}{2}-1-\frac54\eta}\cdot 2^{\ell_0/2}\la^{\sigma(p_0)} \sum_\nu( \sum_{\tau_\ell\in I_{s,\nu}}|{e_{\tau_\ell}(x)}|^2 )^{1/2}  \\
			\ls &\|V\|_{L^{\frac{n}{2-\eta}}(M)} \la^{\frac{n-1}{2}-\frac32\eta}2^{\ell_0/2} (2^{ns/2}+2^{\frac{n-1}2s}2^{\ell_0/2}\cdot 2^s2^{-\ell_0}),
		\end{aligned}
	\end{equation}
 where in the last line we used \eqref{etaubound}.
	Summing over $s,\ell_1,\ell_2$, we get the desired bound $\la^{n-\frac32\eta}(\log\la)^2$. 
	
	\subsection*{(ii)} $\ell_1=\ell_0$.
	
	In this case, $|\tau_\ell-|k||\ls 2^{\ell_0}$ and $|\tau_\ell-|j||\ls 2^{\ell_0}$. 
	We claim that 
	\begin{equation}\label{meanvalue 1}
	   | a_{jk\tau_\ell}|\ls  \la^{-2}2^{-2\ell_0}.
	\end{equation}
Indeed, by the mean value theorem,
\begin{equation}\label{meanvalue 2}
    	a_{jk\tau_\ell}=\frac{f(|k|)-f(\tau_\ell)}{|k|^2-\tau_\ell^2}=\frac{1}{|k|+\tau_\ell} f'(\tau)
\end{equation}
	for some $\tau$ between $\tau_\ell$ and $|k|$. Recall that 
	\begin{equation}\label{meanvalue 3}
	    f(\tau)=\frac{h(\tau)-h(|j|)}{\tau^2-|j|^2}.
	\end{equation}
By using the mean value theorem again, we have
	\begin{equation}\label{meanvalue 4}
	\begin{aligned}
	        f'(\tau)&=-\frac{1}{(\tau+|j|)^2}\frac{h(\tau)-h(|j|)}{\tau-|j|} +\frac{1}{\tau+|j|}\frac{h'(\tau)(\tau-|j|)-(h(\tau)-h(|j|))}{(\tau-|j|)^2} \\
	        &= -\frac{1}{(\tau+|j|)^2} h'(\tau_1)+\frac{1}{\tau+|j|} \frac{h''(\tau_2)}2
	\end{aligned}
	\end{equation}
for some $\tau_1,\tau_2$ between $\tau$ and $|j|$.	

 If $1\le \tau_\ell\le \la/4$, then we have $|k|, |j|\le \la/2$, since $2^{\ell_0}\approx \la^{1-\eta-\eps}$. So by \eqref{hdao}, \eqref{meanvalue 2} and \eqref{meanvalue 4}, we have 
 \[| a_{jk\tau_\ell}|\ls |f'(\tau)|\ls |h'(\tau_1)|+|h'(\tau_2)|\ls \la^{-N},\ \forall N. \]
If $\la/4\le \tau_\ell\le 2\la $,  then $|k|, |j|\approx \la$, since $2^{\ell_0}\approx \la^{1-\eta-\eps}$. So by \eqref{hdao}, \eqref{meanvalue 2} and \eqref{meanvalue 4}, we have 
\[| a_{jk\tau_\ell}|\ls \la^{-1}|f'(\tau)|\ls \la^{-3}|h'(\tau_1)|+\la^{-2}|h'(\tau_2)|\ls \la^{-2}2^{-2\ell_0}. \] This finishes the proof of \eqref{meanvalue 1}.

	By using a similar argument as above, it is not hard to see that 
	\begin{equation}
	    |\partial_{\tau_\ell}a_{jk\tau_\ell}|\ls \la^{-2}2^{-3\ell_0}.
	\end{equation}

	Let 
	\[b_{k,\tau_\ell}=\sum_j \beta_{\ell_1}(\tau_\ell-|k|)\beta_{\ell_2}(\tau_\ell-|j|) a_{jk\tau_\ell}\,e_j^0(x)V_{jk}.\]
	Note that  for fixed $k$ and $\tau_\ell$, the number of $j\in \mathbb{Z}^n$ satisfying $1+|j-k|\approx 2^m$ and $\big|\tau_\ell-|j|\big|\ls 2^{\ell_0}$ is bounded by $2^{(n-1)m}2^{\ell_0}$. Here $m\in\mathbb{N}$. Recall the bound \eqref{vjkb} on $V_{jk}$. Then \begin{align*}|b_{k,\tau_\ell}|&\ls \sum_{2^m\ls \la}\la^{-2}2^{-2\ell_0}\cdot 2^{(n-1)m}2^{\ell_0}2^{(-n+2-\eta)m}\\
		&\ls \la^{-1-\eta}2^{-\ell_0}
	\end{align*}
	and similarly
	\[|\partial_{\tau_\ell}b_{k,\tau_\ell}|\ls \la^{-1-\eta}2^{-2\ell_0}.\]
	Thus by using the spectral projection bounds in Lemma \ref{sogge888}, we have
	\begin{align*}
		\|\sum_{|k|\in I^\prime_{s,\nu}} b_{k,\xi}e_k^0(\cdot)\|&_{L^{p_0}(M)}\\ &\ls 2^{\ell_0/2} \la^{\sigma(p_0)} \|\sum_{|k|\in I^\prime_{s,\nu}} b_{k,\xi}e_k^0(\cdot)\|_{L^{2}(M)} \\
		&\ls 2^{\ell_0/2} \la^{\sigma(p_0)}2^{\ell_0/2} \la^{\frac{n-1}{2}}\cdot \la^{-1-\eta}2^{-\ell_0}\\
		&\ls \la^{\frac{n}{2}-1-\frac54\eta}
	\end{align*}
	and similarly for $\tau\in I_{s,\nu}$
	\[\|\sum_{|k|\in I^\prime_{s,\nu}}\partial_{\tau}  b_{k,\tau}e_k^0(\cdot)\|_{L^{p_0}(M)}\ls 2^{-\ell_0}\la^{\frac{n}{2}-1-\frac54\eta}.
	\]
	By using Lemma~\ref{delta1} and repeating the argument in (i), we get
	\begin{align*}
		\sum_\nu|\int \sum_{\tau_\ell\in I_{s,\nu}}\sum_{k} &b_{k,\tau_\ell} \overline{e_k^0(z)}V(z)e_{\tau_\ell}(z) \overline{e_{\tau_\ell}(x)}dz |\\
		\ls & \|V\|_{L^{\frac{n}{2-\eta}}(M)}\la^{\frac{n}{2}-1-\frac54\eta}\cdot 2^{\ell_0/2}\la^{\sigma(p_0)} \sum_\nu( \sum_{\tau_\ell\in I_{ s,\nu}}|{e_{\tau_\ell}(x)}|^2 )^{1/2}  \\
		\ls &\|V\|_{L^{\frac{n}{2-\eta}}(M)} \la^{\frac{n-1}{2}-\frac32\eta}\cdot 2^{\ell_0/2} (2^{ns/2}+2^{\frac{n-1}2s}2^{\ell_0/2}\cdot 2^s2^{-\ell_0}).
	\end{align*}
	Summing over $\ell_1,\ell_2,s$, we get the desired bound $\la^{n-\frac32\eta}\log\la$.

	\subsection*{\textbf{Case 2}}$2^{\ell_1}\le100\la,2^{\ell_2}>2^{\ell_1+2}$.
	
	It suffices to estimate
	\begin{equation}\label{eq2.2}
		\begin{aligned}\sum_{j, k, \tau_\ell}\beta_{\ell_1}&(\tau_\ell-|k|)\beta_{\ell_2}(\tau_\ell-|j|) \int a_{jk\tau_\ell}\,e_j^0(x)V_{jk}\overline{e_k^0(z)}V(z)e_{\tau_\ell}(z) \overline{e_{\tau_\ell}(x)}dz
		\end{aligned}
	\end{equation}
	for each integer $\ell_1$ and $\ell_2$ satisfying $2^{\ell_1}\le 100\la,\  2^{\ell_2}>2^{\ell_1+2}$. 
	We consider two subcases separately: $2^{\ell_2}\le  1000\la$ or $2^{\ell_2}> 1000\la$.
	
	Let $\tau_\ell\approx 2^s\le2\la$.  We divide the interval $[1,2\la]$ as in Case 1. In this case, we have $||k|-|j||\approx 2^{\ell_2}$, $\tau_\ell+|k|\gs 2^s$.

	First, we deal with  $2^{\ell_2}\le 1000\la$. If $\ell_1=\ell_0$, then by the mean value theorem, if we argue as in the proof of \eqref{meanvalue 1}, it is not hard to see that
	\[|a_{jk\tau_\ell}|\ls \la^{-1}2^{-s}2^{-\ell_2}2^{-\ell_0}.\] Suppose that $\ell_1>\ell_0$. If $|k|+|j|>\la/2$ and $|k|+\tau_\ell>\la/20$, then 
	\[|a_{jk\tau_\ell}|\ls \la^{-1}2^{-s}2^{-\ell_1}2^{-\ell_2}.\]
	If $|k|+|j|>\la/2$ and $|k|+\tau_\ell\le\la/20$, then  $2^{\ell_2}\approx \la$, and
	\[|a_{jk\tau_\ell}|\ls\la^{-2}2^{-s}2^{-\ell_1}.\]
	If $|k|+|j|\le\la/2$, then 
	\[|a_{jk\tau_\ell}|\ls \la^{-2}2^{-\ell_1}2^{-\ell_2}.\]
	So we always have
	\[|a_{jk\tau_\ell}|\ls \la^{-1}2^{-s}2^{-\ell_1}2^{-\ell_2}\]
	and similarly
	\[|\partial_{\tau_\ell}a_{jk\tau_\ell}|\ls \la^{-1}2^{-s}2^{-\ell_1}2^{-\ell_2}(2^{-s}+2^{-\ell_0}).\]
	Let
	\[b_{k,\tau_\ell}=\sum_j \beta_{\ell_1}(\tau_\ell-|k|)\beta_{\ell_2}(\tau_\ell-|j|)
	a_{jk\tau_\ell} \,e_j^0(x)V_{jk},\]
	Note that for fixed $k$ and $\tau_\ell$, the number of $j\in \mathbb{Z}^n$ satisfying $1+|j-k|\approx 2^m\ls \la$ and $\big||j|-|\tau_\ell|\big|\ls 2^{\ell_2}$ is bounded by $2^{(n-1)m}2^{\ell_2}$. Here $m\in\mathbb{N}$ and $\ell_2\ge\ell_0$. Recall the bound \eqref{vjkb} on $V_{jk}$. Then 
	\begin{align*}
		|b_{k,\tau_\ell}|\ls &\sum_{2^m\ls \la} \la^{-1}2^{-s} 2^{-\ell_2}2^{-\ell_1}\la^{(n-1)m}2^{\ell_2}2^{(-n+2-\eta)m}  \\
		\ls& \la^{-\eta}2^{-s}2^{-\ell_1}.
	\end{align*}
	and similarly
	\[
	|\partial_{\tau_\ell} b_{k,\tau_\ell}|\ls \la^{-\eta}2^{-s}2^{-\ell_1}(2^{-s}+2^{-\ell_0}).\]
	
	By using the spectral projection bounds in Lemma \ref{sogge888}, we have
	\begin{align*}
		\|\sum_{|k|\in I^\prime_{s,\nu}} b_{k,\xi}e_k^0(\cdot)\|_{L^{p_0}(M)} &\ls 2^{\ell_1/2} \la^{\sigma(p_0)} \|\sum_{|k|\in I^\prime_{s,\nu}} b_{k,\xi}e_k^0(\cdot)\|_{L^{2}(M)} \\
		&\ls 2^{\ell_1/2} \la^{\sigma(p_0)}2^{\ell_1/2} \la^{\frac{n-1}{2}} \la^{-\eta}2^{-s}2^{-\ell_1}\\
		&\ls \la^{\frac{n}{2}-\frac54\eta} 2^{-s}.
	\end{align*}
	And similarly for $\tau\in I_{s,\nu}$
	\[\|\sum_{|k|\in I^\prime_{s,\nu}}\partial_\tau b_{k,\tau}e_k^0(\cdot)\|_{L^{p_0}(M)}\ls \la^{\frac{n}{2}-\frac54\eta} 2^{-s}(2^{-s}+2^{-\ell_0}).\]
	By applying Lemma~\ref{delta1} with $\delta=2^{\ell_0}$, we have 
	\begin{equation}\label{5.14}
		\begin{aligned}
			\sum_\nu|\int \sum_{ \tau_\ell\in I_{s,\nu}}\sum_k &b_{k,\tau_\ell} \overline{e_k^0(z)}V(z)e_{\tau_\ell}(z) \overline{e_{\tau_\ell}(x)}dz |\\
			\ls & \|V\|_{L^{\frac{n}{2-\eta}}(M)}\la^{\frac{n}{2}-\frac54\eta} 2^{-s}\cdot 2^{\ell_0/2}\la^{\sigma(p_0)}  \sum_\nu( \sum_{\tau_\ell\in I_{s,\nu}}|e_{\tau_\ell}(x)|^2 )^{1/2}  \\
			\ls &\|V\|_{L^{\frac{n}{2-\eta}}(M)} \la^{\frac{n+1}{2}-\frac32\eta} 2^{-s}2^{\ell_0/2} (2^{ns/2}+2^{\frac{n-1}2s}2^{\ell_0/2}\cdot 2^s2^{-\ell_0}),
		\end{aligned}
	\end{equation} 
  where in the last line we used \eqref{etaubound}.
	Summing over $s,\ell_1,\ell_2$,  we get the desired bound $\la^{n-\frac32\eta}(\log\la)^2$.
	
	Next, we deal with $2^{\ell_2}> 1000\la$. In this case, we have $|j-k|\approx |j|\approx  2^{\ell_2}$.
	If $\ell_1=\ell_0$, then by the mean value theorem, if we argue as in the proof of \eqref{meanvalue 1}, one can show that
	\[|a_{jk\tau_\ell}|\ls2^{-s}2^{-2\ell_2}2^{-\ell_0}.\]
	Suppose that $\ell_1>\ell_0$.
	If $|k|+\tau_\ell>\la/2$, we have
	\[|a_{jk\tau_\ell}|\ls \la^{-1}2^{-2\ell_2}2^{-\ell_1}.\]
	If $|k|+\tau_\ell\le\la/2$, we have
	\[|a_{jk\tau_\ell}|\ls 2^{-s}2^{-2\ell_2}2^{-\ell_1}.\]
	So we always have
	\[|a_{jk\tau_\ell}|\ls 2^{-s}2^{-2\ell_2}2^{-\ell_1}\]
	and similarly
	\[|\partial_{\tau_\ell}a_{jk\tau_\ell}|\ls 2^{-s}2^{-2\ell_2}2^{-\ell_1}(2^{-s}+2^{-\ell_0}).\]
	Let
	\[
	b_{k,\tau_\ell}=\sum_j \beta_{\ell_1}(\tau_\ell-|k|)\beta_{\ell_2}(\tau_\ell-|j|)
	a_{jk\tau_\ell} \,e_j^0(x)V_{jk}.\]
	Note that for fixed $k$, the number of $j\in \mathbb{Z}^n$ satisfying $|j-k|\approx 2^{\ell_2}$ is bounded by $2^{n\ell_2}$. Recall the bound \eqref{vjkb} on $V_{jk}$.  Then
	\begin{align*}
		|b_{k,\tau_\ell}|\ls & 2^{n\ell_2}2^{(-n+2-\eta)\ell_2}\cdot 2^{-s} 2^{-2\ell_2}2^{-\ell_1}  \\
		\ls& 2^{-s}2^{-\eta\ell_2}2^{-\ell_1},
	\end{align*}
	and similarly,
	\[
	|\partial_{\tau_\ell} b_{k,\tau_\ell}|\ls 2^{-s}2^{-\eta\ell_2}2^{-\ell_1}(2^{-s}+2^{-\ell_0}).\]
	By using the spectral projection bounds in Lemma \ref{sogge888}, we have
	\begin{align*}
		\|\sum_{|k|\in I^\prime_{s,\nu}} b_{k,\xi}e_k^0(\cdot)\|_{L^{p_0}(M)} &\ls 2^{\ell_1/2} \la^{\sigma(p_0)} \|\sum_{|k|\in I^\prime_{s,\nu}} b_{k,\xi}e_k^0(\cdot)\|_{L^{2}(M)} \\
		&\ls 2^{\ell_1/2} \la^{\sigma(p_0)} 2^{\ell_1/2} \la^{\frac{n-1}{2}} 2^{-s}2^{-\eta\ell_2}2^{-\ell_1}\\
		&\ls \la^{\frac{n}{2}-\frac14\eta}2^{-\eta\ell_2}2^{-s}.
	\end{align*}
	And  similarly for $\tau\in I_{s,\nu}$
	\[
	\|\sum_{|k|\in I^\prime_{s,\nu}}\partial_{\tau}  b_{k,\tau}e_k^0(\cdot)\|_{L^{p_0}(M)}\ls \la^{\frac{n}{2}-\frac14\eta}2^{-\eta\ell_2}2^{-s}(2^{-s}+2^{-\ell_0}) .\]
	By applying Lemma~\ref{delta1} with $\delta=2^{\ell_0}$, we have for $\tau_\ell\in I_{s,\nu}$
	\begin{align*}
		\sum_\nu|\int \sum_{ \tau_\ell\in I_{s,\nu}}\sum_k &b_{k,\tau_\ell}\overline{e_k^0(z)}V(z)e_{\tau_\ell}(z) \overline{e_{\tau_\ell}(x)}dz |\\
		\ls & \|V\|_{L^{\frac{n}{2-\eta}}(M)}\la^{\frac{n}{2}-\frac14\eta}2^{-\eta\ell_2}2^{-s}\cdot 2^{\ell_0/2}\la^{\sigma(p_0)}  \sum_\nu( \sum_{\tau_\ell\in I_{s,\nu}}|e_{\tau_\ell}(x)|^2 )^{1/2}  \\
		\ls &\|V\|_{L^{\frac{n}{2-\eta}}(M)} \la^{\frac{n+1}{2}-\frac12\eta}2^{-\eta\ell_2}2^{-s}\cdot 2^{\ell_0/2} (2^{ns/2}+2^{\frac{n-1}{2}s} 2^{\ell_0/2}\cdot 2^s2^{-\ell_0}),
	\end{align*}
  where in the last line we used \eqref{etaubound}.
	Summing over $s,\ell_1,\ell_2$, we get the desired bound $\la^{n-\frac32\eta}\log\la$.
	
	\subsection*{\textbf{Case 3}}$2^{\ell_1}>100\la,2^{\ell_2}\le10\la$.
	
	It suffices to estimate
	\begin{equation}\label{equ3.2}
		\begin{aligned}\sum_{j, k, \tau_\ell}\beta_{\ell_1}&(\tau_\ell-|k|)\beta_{\ell_2}(\tau_\ell-|j|) \int a_{jk\tau_\ell}\,e_j^0(x)V_{jk}\overline{e_k^0(z)}V(z)e_{\tau_\ell}(z) \overline{e_{\tau_\ell}(x)}dz
		\end{aligned}
	\end{equation}
	for each integer $\ell_1$ and $\ell_2$ satisfying $2^{\ell_1}> 100\la,\  2^{\ell_2}\le10\la$. Let $\tau_\ell\approx 2^s\le 2\la$. We divide the interval $[1,2\la]$ as in Case 1.
	
	If $|j|+\tau_\ell>\la/20$, then
	\[|a_{jk\tau_\ell}|\ls 2^{-4\ell_1}+\la^{-1}2^{-2\ell_1}2^{-\ell_2}.\]
	If $|j|+\tau_\ell\le\la/20$, then
	\[|a_{jk\tau_\ell}|\ls2^{-4\ell_1}+2^{-2\ell_1}2^{-\ell_2}2^{-s}\la^{-N},\ \forall N.\]
	So we always have
	\[|a_{jk\tau_\ell}|\ls \la^{-1}2^{-2\ell_1}2^{-\ell_2}\]
	and similarly
	\[|\partial_{\tau_\ell}a_{jk\tau_\ell}|\ls \la^{-1}2^{-2\ell_1}2^{-\ell_2}(2^{-s}+2^{-\ell_0}).\]
	Let \[
	b_{k,\tau_\ell}=\sum_j \beta_{\ell_1}(\tau_\ell-|k|)\beta_{\ell_2}(\tau_\ell-|j|)
	a_{jk\tau_\ell} \,e_j^0(x)V_{jk}.\]
	Note that for fixed $k$ and $\tau_\ell$, the number of $j\in \mathbb{Z}^n$ satisfying $||j|-\tau_\ell|\ls 2^{\ell_2}$ and $|j|\ls \la$ is bounded by $\la^{n-1}2^{\ell_2}$. Recall the bound \eqref{vjkb} on $V_{jk}$.  Then
	\begin{align*}
		|b_{k,\tau_\ell}|\ls & \la^{n-1}2^{\ell_2}2^{(-n+2-\eta)\ell_1}\cdot \la^{-1} 2^{-\ell_2}2^{-2\ell_1}  \\
		\ls& \la^{n-2}2^{(-n-\eta)\ell_1}
	\end{align*}
	and similarly,
	\[
	|\partial_{\tau_\ell} b_{k,\tau_\ell}|\ls \la^{n-2}2^{(-n-\eta)\ell_1}(2^{-s}+2^{-\ell_0}).\]
By using the spectral projection bounds in Lemma \ref{sogge888}, we have
	\begin{align*}
		\|\sum_{|k|\approx 2^{\ell_1}} b_{k,\xi}e_k^0(\cdot)\|_{L^{p_0}(M)}
		&\ls 2^{\ell_1/2}2^{\sigma(p_0)\ell_1}2^{\frac{n\ell_1}{2}} \cdot \la^{n-2}2^{(-n-\eta)\ell_1}\\
		&\ls \la^{n-2}2^{(-n/2+1-\frac54\eta)\ell_1}
	\end{align*}
	and similarly for $\tau\in I_{s,\nu}$
	\[
	\|\sum_{|k|\approx 2^{\ell_1}}\partial_{\tau}  b_{k,\tau}e_k^0(\cdot)\|_{L^{p_0}(M)}\ls \la^{n-2}2^{(-n/2+1-\frac54\eta)\ell_1}(2^{-s}+2^{-\ell_0}).\]
	By applying Lemma~\ref{delta1} with $\delta=2^{\ell_0}$, we have 
	\begin{align*}
		\sum_\nu|\int &\sum_{\tau_\ell\in I_{s,\nu}}\sum_k b_{k,\tau_\ell} e_k^0(z)V(z)e_{\tau_\ell}(z) \overline{e_{\tau_\ell}(x)}dz |\\
		\ls & \|V\|_{L^{\frac{n}{2-\eta}}(M)} \la^{n-2}2^{(-n/2+1-\frac54\eta)\ell_1} \cdot2^{\ell_0/2}\la^{\sigma(p_0)}\sum_\nu( \sum_{\tau_\ell\in I_{s,\nu}}|e_{\tau_\ell}(x)|^2 )^{1/2}\\
		\ls & \|V\|_{L^{\frac{n}{2-\eta}}(M)} \la^{n-\frac32-\frac14\eta}2^{(-n/2+1-\frac54\eta)\ell_1} \cdot2^{\ell_0/2} (2^{ns/2}+2^{\ell_0/2}2^{\frac{n-1}{2}s}\cdot2^s2^{-\ell_0}),
	\end{align*}
  where in the last line we used \eqref{etaubound}.
	Note that $-n/2+1-\frac54\eta<0$, summing over $\ell_1,s,\ell_2$,  we get the desired bound $\la^{n-\frac32\eta}\log\la.$
	
	\subsection*{\textbf{Case 4}}$2^{\ell_1}>100\la,2^{\ell_2}>10\la$.
	
	It suffices to estimate
	\begin{equation}\label{eq3.2}
		\begin{aligned}\sum_{j, k, \tau_\ell}\beta_{\ell_1}&(\tau_\ell-|k|)\beta_{\ell_2}(\tau_\ell-|j|) \int a_{jk\tau_\ell}\,e_j^0(x)V_{jk}\overline{e_k^0(z)}V(z)e_{\tau_\ell}(z) \overline{e_{\tau_\ell}(x)}dz
		\end{aligned}
	\end{equation}
	for each integer $\ell_1$ and $\ell_2$ satisfying $2^{\ell_1}> 100\la,\  2^{\ell_2}>10\la$. Since $\tau_\ell\le2\la$, we have $|k|,|j|>2\tau_\ell$ in this case. As before we split
	\begin{equation}\label{6.6}\nonumber
		\begin{aligned}
			a_{jk\tau_\ell}=&\frac{h(|k|)-h(|j|)}{(|k|^2-|j|^2)(|k|^2-\tau_\ell^2)}+\frac{h(\tau_\ell)-h(|j|)}{(\tau_\ell^2-|j|^2)(\tau_\ell^2-|k|^2)}\\
			=&a^0_{jk\tau_\ell}+a^1_{jk\tau_\ell}.
		\end{aligned}
	\end{equation}
	We first handle $a^1_{jk\tau_\ell}$. Since $|j|>2\tau_\ell$, we split
	\begin{equation}\label{6.8'}\nonumber
		\begin{aligned}
			a^1_{jk\tau_\ell}=&\frac{h(\tau_\ell)}{(\tau_\ell^2-|j|^2)(\tau_\ell^2-|k|^2)}-\frac{h(|j|)}{(\tau_\ell^2-|j|^2)(\tau_\ell^2-|k|^2)}\\
			=&c^0_{jk\tau_\ell}+c^1_{jk\tau_\ell}.
		\end{aligned}
	\end{equation}
	The arguments that we shall use to control terms involving $c^0_{jk\tau_\ell}$ and $c^1_{jk\tau_\ell}$ are similar, so for simplicity we shall only give the details for 
	$c^0_{jk\tau_\ell}$ here. Recall that by definition, 
	\[V_{jk}=\int_M \overline{e_j^0(y)}e_k^0(y)V(y)dy.\]
	Thus, it suffices to estimate
	\begin{align*}
		&\sum_{j, k, \tau_\ell}\beta_{\ell_1}(\tau_\ell-|k|)\beta_{\ell_2}(\tau_\ell-|j|)\iint c^0_{jk\tau_\ell}\,e_j^0(x)\overline{e_j^0(y)}e_k^0(y)V(y)\overline{e_k^0(z)}V(z)e_{\tau_\ell}(z) \overline{e_{\tau_\ell}(x)}dzdy \\
		=&\sum_{\tau_\ell\in[1,2\la]}\iint K_2(\tau_\ell, x, y) K_1(\tau_\ell, y, z) h(\tau_\ell) V(y)V(z)e_{\tau_\ell}(z) \overline{e_{\tau_\ell}(x)} dzdy
	\end{align*}
	where 
	$$K_\nu(\tau_\ell, x, y)=\sum_j \frac{\beta_{\ell_\nu}(\tau_\ell-|j|)}{\tau_\ell^2-|j|^2}e_j^0(x)\overline{e_j^0(y)}, \,\,\, \nu=1,2.
	$$
We claim that $K_\nu(\tau_\ell, x, y)$ are pseudo-differential operators and their kernels satisfy
	\begin{equation}\label{6.10}
		|K_\nu(\tau_\ell, x, y)|\ls 2^{(n-2)\ell_\nu} (1+2^{\ell_\nu}|x-y|)^{-N},\ \forall\, N.
	\end{equation}
To prove the claim \eqref{6.10}, we observe that $K_\nu(\tau_\ell,x,y)$ is the kernel of $2^{-2\ell_\nu} m(P^0/2^{\ell_\nu})$ where
\[m(t)=\beta(2^{-\ell_\nu}\tau_\ell-t)((2^{-\ell_\nu}\tau_\ell)^2-t^2) ^{-1}.\] Since $2^{-\ell_\nu}\tau_\ell\ls 1$ and $\beta\in C_0^\infty$, $m(t)$ is a symbol of order $\mu$ for any $\mu<-n$, i.e.
\[|\partial_t^\alpha m(t)|\le C_\alpha (1+|t|)^{\mu-\alpha},\ \forall \alpha. \]
Then  the claim follows from Lemma \ref{pdo}.

 Moreover, by using the rough eigenfunction bounds \eqref{rough},
	\begin{equation}\label{6.12}
		\sum_{\tau_\ell\in[1,2\la]}\big|h(\tau_\ell)e_{\tau_\ell}(z) \overline{e_{\tau_\ell}(x)} \big|\ls \la^n,\,\,\,\forall\, z,\,\,x\in M.
	\end{equation}
	By \eqref{6.12} and using H\"older's inequality twice, we have
	\begin{align*}
		&\sum_{\tau_\ell\in[1,2\la]}\iint K_2(\tau_\ell, x, y) K_1(\tau_\ell, y, z) h(\tau_\ell) V(y)V(z)e_{\tau_\ell}(z) \overline{e_{\tau_\ell}(x)} dzdy \\
		&\ls  \la^n\,\|V\|^2_{L^{\frac{n}{2-\eta}}(M)} \sup_{\tau_\ell, x}\|K_2(\tau_\ell, x, \cdot)\|_{L^{\frac{n}{n-2+\eta}}(M)}\cdot\sup_{\tau_\ell, y}\|K_1(\tau_\ell, y, \cdot)\|_{L^{\frac{n}{n-2+\eta}}(M)} \\
		&\ls  \la^{n} 2^{-\ell_1\eta}2^{-\ell_2\eta}.
	\end{align*}
	Summing over $\ell_1, \ell_2$, we get the desired bound $\la^{n-2\eta}$.
	
	For the other term $c_{jk\tau_\ell}^1$, we just need to replace $K_2$ above by
	\[K_2(\tau_\ell, x, y)=\sum_j \frac{\beta_{\ell_2}(\tau_\ell-|j|)}{\tau_\ell^2-|j|^2}h(|j|)e_j^0(x)\overline{e_j^0(y)}\]
	and the kernel estimate \eqref{6.10} still holds by Lemma \ref{pdo}. Moreover, by using the rough eigenfunction bounds \eqref{rough}, we have
\[	\sum_{\tau_\ell\in[1,2\la]}\big|e_{\tau_\ell}(z) \overline{e_{\tau_\ell}(x)} \big|\ls \la^n,\,\,\,\forall\, z,x\in M.\]Then we can get the same bound $\la^{n-2\eta}$ by repeating the argument above.
	\quad\\
	
	Next, we handle $a^0_{jk\tau_\ell}$. We consider three subcases separately: $2^{\ell_2}\approx 2^{\ell_1}$, $2^{\ell_2}< 2^{\ell_1-3}$, $2^{\ell_2}>2^{\ell_1+3}$. Since $|k|,|j|>2\tau_\ell$, we have $|k|\approx |j|$, $|k|\ge2|j|$, $|j|\ge2|k|$ respectively in these cases.
	
	\subsection*{(i)} $2^{\ell_1}\approx 2^{\ell_2}$, namely $|\ell_1-\ell_2|\le 3$. 
	
	In this case, $|k|\approx|j|\approx 2^{\ell_1}$, so
	by the mean value theorem, 
	\[
	\Big|\frac{h(|k|)-h(|j|)}{|k|^2-|j|^2}\Big|\ls 2^{-\ell_1} |h^\prime(|k|)|\ls 2^{-\ell_1}2^{-\ell_0}2^{-N\ell_1},\,\,\, \forall  N.\]
	Then
	\[|a_{jk\tau_\ell}^0|\ls 2^{-3\ell_1}2^{-\ell_0}2^{-N\ell_1} \ls 2^{-N\ell_1}.\]
	By \eqref{rough} and H\"older's inequality, 
	\begin{align*}
		&|\sum_{j, k, \tau_\ell}\beta_{\ell_1}(\tau_\ell-|k|)\beta_{\ell_2}(\tau_\ell-|j|) \iint a^0_{jk\tau_\ell}\,e_j^0(x)\overline{e_j^0(y)}V(y)e_k^0(y)\overline{e_k^0(z)}V(z)e_{\tau_\ell}(z) \overline{e_{\tau_\ell}(x)}dydz| \\
		&\ls\|V\|_{L^1(M)}^2\sum_{|j|\approx2^{\ell_1}}\sum_{|k|\approx 2^{\ell_1}}2^{-N\ell_1}\cdot\la^n\\
		&\approx 2^{2n\ell_1}2^{-N\ell_1}\cdot\la^{n}.
	\end{align*}
	
	Summing over $\ell_1,\ell_2$, we get  $\la^{3n-N}$, which is bounded by a constant for large $N$.
	
	\subsection*{(ii)} $2^{\ell_2}> 2^{\ell_1+3}$.

	In this case, $2^{\ell_2}\approx |j|\ge 2|k|\approx 2^{\ell_1}$. We write
	\begin{align*}
		a^0_{jk\tau_\ell}=&\frac{h(|k|)}{(|k|^2-|j|^2)(|k|^2-\tau_\ell^2)}-\frac{h(|j|)}{(|k|^2-|j|^2)(|k|^2-\tau_\ell^2)} \\
		=&d^0_{jk\tau_\ell}+d^1_{jk\tau_\ell}
	\end{align*}
	The second term satisfies
	\[
	|d_{jk\tau_\ell}^1|\ls 2^{-N\ell_2},\,\,\, \forall N,\]
	which implies 
	\begin{align*}
		&|\sum_{j, k, \tau_\ell}\beta_{\ell_1}(\tau_\ell-|k|)\beta_{\ell_2}(\tau_\ell-|j|) \iint d^1_{jk\tau_\ell}\,e_j^0(x)\overline{e_j^0(y)}V(y)e_k^0(y)\overline{e_k^0(z)}V(z)e_{\tau_\ell}(z) \overline{e_{\tau_\ell}(x)}dz| \\
		&\ls\|V\|_{L^1(M)}^2\sum_{|j|\approx2^{\ell_2}}\sum_{|k|\approx 2^{\ell_1}}2^{-N\ell_2}\cdot\la^n\\
		&\approx 2^{n\ell_2}2^{n\ell_1}2^{-N\ell_2}\cdot\la^{n}.
	\end{align*}
	Summing over $\ell_1$, $\ell_2$, we get $\la^{3n-N}$, which is bounded by a constant for large $N$.
	
	It remains to deal with $d_{jk\tau_\ell}^0$. We write
	\begin{align*}
		\sum_{j, k, \tau_\ell}\beta_{\ell_1}&(\tau_\ell-|k|)\beta_{\ell_2}(\tau_\ell-|j|)\iint d^0_{jk\tau_\ell}\,e_j^0(x)\overline{e_j^0(y)}e_k^0(y)V(y)\overline{e_k^0(z)}V(z)e_{\tau_\ell}(z) \overline{e_{\tau_\ell}(x)}dzdy \\
		= &\sum_{k, \tau_\ell}\iint K_2(\tau_\ell, k, x, y) K_1(\tau_\ell, k, y, z)  V(y)V(z)e_{\tau_\ell}(z) \overline{e_{\tau_\ell}(x)} dzdy
	\end{align*}
	where 
	$$K_2(\tau_\ell, k, x, y)=\sum_j \frac{\beta_{\ell_2}(\tau_\ell-|j|)}{|k|^2-|j|^2}e_j^0(x)\overline{e_j^0(y)},
	$$
	and 
	$$K_1(\tau_\ell, k, y, z)=\frac{\beta_{\ell_1}(\tau_\ell-|k|)}{|k|^2-\tau_\ell^2}h(|k|)e_k^0(y)\overline{e_k^0(z)}.
	$$
	By Lemma \ref{pdo}, if we argue as in the proof of \eqref{6.10}, we can see that $K_2$ is a pseudo-differential operators and its kernel $K_2(\tau_\ell,k, x, y)$ satisfies
	\begin{equation}\label{6.18}
		\sup_{\tau_\ell,k}	|K_2(\tau_\ell, k, x, y)|\ls 2^{(n-2)\ell_2} (1+2^{\ell_2}|x-y|)^{-N}\,\,\,\forall  N,
	\end{equation}
	Moreover,
	\begin{equation}\label{6.19}
		\sup_{\tau_\ell, k, y, z}|K_1(\tau_\ell, k, y, z)|\ls  2^{-N\ell_1},\,\,\, \forall N.
	\end{equation}
	By H\"older's inequality and \eqref{rough}, we have
	\begin{align*}
		&\sum_{|k|\approx 2^{\ell_1}}\sum_{\tau_\ell}\iint K_2(\tau_\ell, k, x, y) K_1(\tau_\ell, k, x, y)  V(y)V(z)e_{\tau_\ell}(z) \overline{e_{\tau_\ell}(x)} dzdy \\
		&\ls  2^{n\ell_1}\la^n\,\|V\|_{L^{\frac{n}{2-\eta}}(M)}\|V\|_{L^{1}(M)} \sup_{\tau_\ell,k, x}\|K_2(\tau_\ell, k,x, \cdot)\|_{L^{\frac{n}{n-2+\eta}}(M)}\sup_{\tau_\ell,k, y}\|K_1(\tau_\ell,k, y, \cdot)\|_{L^{\infty}(M)} \\
		&\ls  2^{n\ell_1}\la^{n} 2^{-\eta\ell_2}2^{-N\ell_1}.
	\end{align*}
	Summing over $\ell_1, \ell_2$, we get $\la^{2n-\eta-N}$, which is bounded by a constant for large $N$.
	
	\subsection*{(iii)} $2^{\ell_2}< 2^{\ell_1-3}$. 
	
	The argument is similar to (ii), but we give the details for completeness. In this case, $2^{\ell_1}\approx |k|\ge 2|j|\approx 2^{\ell_2}$. As before, we write
	\begin{align*}
		a^0_{jk\tau_\ell}=&\frac{h(|k|)}{(|k|^2-|j|^2)(|k|^2-\tau_\ell^2)}-\frac{h(|j|)}{(|k|^2-|j|^2)(|k|^2-\tau_\ell^2)} \\
		=&d^0_{jk\tau_\ell}+d^1_{jk\tau_\ell}
	\end{align*}
	The first term  satisfies
	\[|d_{jk\tau_\ell}^0|\ls 2^{-N\ell_1},\ \forall N.\]
	which implies 
	\begin{align*}
		&|\sum_{j, k, \tau_\ell}\beta_{\ell_1}(\tau_\ell-|k|)\beta_{\ell_2}(\tau_\ell-|j|) \iint d^0_{jk\tau_\ell}\,e_j^0(x)\overline{e_j^0(y)}V(y)e_k^0(y)\overline{e_k^0(z)}V(z)e_{\tau_\ell}(z) \overline{e_{\tau_\ell}(x)}dz| \\
		&\ls\|V\|_{L^1(M)}^2\sum_{|j|\approx2^{\ell_2}}\sum_{|k|\approx 2^{\ell_1}}2^{-N\ell_1}\cdot\la^n\\
		&\approx 2^{n\ell_2}2^{n\ell_1}2^{-N\ell_1}\cdot\la^{n}.
	\end{align*}
	Summing over $\ell_1$, $\ell_2$ we get $\la^{3n-N}$, which is bounded by a constant for large $N$.
	
	Now it remains to handle $d_{jk\tau_\ell}^1$. We write
	\begin{align*}
		\sum_{j, k, \tau_\ell}\beta_{\ell_1}&(\tau_\ell-|k|)\beta_{\ell_2}(\tau_\ell-|j|)\iint d^1_{jk\tau_\ell}\,e_j^0(x)\overline{e_j^0(y)}e_k^0(y)V(y)\overline{e_k^0(z)}V(z)e_{\tau_\ell}(z) \overline{e_{\tau_\ell}(x)}dzdy\big| \\
		= &\sum_{\tau_\ell, j}\iint K_2(\tau_\ell,j, x, y) K_1(\tau_\ell, j, y, z)  V(y)V(z)e_{\tau_\ell}(z) \overline{e_{\tau_\ell}(x)} dzdy
	\end{align*}
	where 
	$$K_2(\tau_\ell, j, x, y)=\beta_{\ell_2}(\tau_\ell-|j|)h(|j|)e_j^0(x)\overline{e_j^0(y)},
	$$
	and 
	$$K_1(\tau_\ell, j, y, z)=\sum_k \frac{\beta_{\ell_1}(\tau_\ell-|k|)}{(|k|^2-|j|^2)(|k|^2-|\tau_\ell|^2)}e_k^0(y)\overline{e_k^0(z)}.
	$$
	By Lemma \ref{pdo}, if we argue as in the proof of \eqref{6.10}, we can show that $K_1$ is a pseudo-differential operator and its kernel $K_1(\tau_\ell,j, y, z)$ satisfies
	\begin{equation}\label{6.23}
		\sup_{\tau_\ell,j}|K_1(\tau_\ell, j, y, z)|\ls 2^{(n-4)\ell_1} (1+2^{\ell_1}|y-z|)^{-N},\,\,\,\forall N,
	\end{equation}
	Moreover,
	\[\sup_{\tau_\ell, j, x,y}|K_2(\tau_\ell, j, x, y)|\ls  2^{-N\ell_2},\,\,\, \forall N.
	\]
	By H\"older's inequality and \eqref{rough}, we have
	\begin{equation}\nonumber
		\begin{aligned}
			&\sum_{|j|\approx 2^{\ell_2}}\sum_{\tau_\ell}\iint K_2(\tau_\ell, j,x, y) K_1(\tau_\ell,j, y, z)  V(y)V(z)e_{\tau_\ell}(z) \overline{e_{\tau_\ell}(x)} dzdy \\
			\ls & 2^{n\ell_2}\la^n\,\|V\|_{L^{\frac{n}{2-\eta}}(M)}\|V\|_{L^{1}(M)} \sup_{\tau_\ell,j, x}\|K_2(\tau_\ell,j, x, \cdot)\|_{L^{\infty}(M)}\sup_{\tau_\ell,j, y}\|K_1(\tau_\ell, j,y, \cdot)\|_{L^{\frac{n}{n-2+\eta}}(M)} \\
			\ls & 2^{n\ell_2}\la^{n} 2^{-N\ell_2}2^{-\ell_1(\eta+2)}.
		\end{aligned}
	\end{equation}
	Summing over $\ell_1, \ell_2$, we get $\la^{2n-2-\eta-N}$, which is bounded by a constant for large $N$.

	\section{ High-frequency estimates}
	
	In this section we shall handle the high frequencies $\tau_\ell>2\la$. Our goal is to show that 
	\begin{equation}\label{h.1}
		\Big|\sum_{j,k}\sum_{\tau_\ell> 2\la}
		\int a_{jk\tau_\ell}\,e_j^0(x)V_{jk}\overline{e_k^0(z)}V(z)e_{\tau_\ell}(z) \overline{e_{\tau_\ell}(x)}dz\Big|
		\ls \la^{n-\frac32\eta}\log\la,
	\end{equation}for $a_{jk\tau_\ell}$ defined as in \eqref{2.2}.
	
	We shall divide our discussion into the following three cases:
	\begin{enumerate}[$\bullet$]
		\item $\tau_\ell\approx |k|$
		\item $\tau_\ell\ls |k|$
		\item $\tau_\ell\gs |k|$.
	\end{enumerate}
 The first two cases can be handled in a way that is similar to the low-frequency case using the decay properties of $h(\tau)$ for large $\tau$ as in \eqref{hda}. More explicitly, 
the first case is straightforward and can be handled by Lemma \ref{delta1}. The second case consists of two subcases: Case 2.1 ($|j|\ls \tau_\ell\ls |k|$) and Case 2.2 ($|j|,|k|\gs\tau_\ell$). We still apply Lemma \ref{delta1} to the first one, and use the kernel estimates of pseudo-differential operators (Lemma \ref{pdo}) for the other. The third case is more involved, and $a_{jk\tau_\ell}^0$, $a_{jk\tau_\ell}^1$ are treated seperately. We first split the term with $a_{jk\tau_\ell}^0$ into three parts (see \eqref{hde}), and handle them in Cases 3.1.1, 3.1.2, 3.1.3. The first one is handled by the heat kernel estimates (Lemma \ref{heatk}), while the other two follow from Lemma \ref{delta1}. Next, to handle the term with $a_{jk\tau_\ell}^1$, we expand 
\[
	\frac{1}{\tau_\ell^2-|j|^2}=\tau_\ell^{-2}
	+\tau_\ell^{-2}\bigl(|j|/\tau_\ell\bigr)^2+
	\cdots + \tau_\ell^{-2}\bigl(|j|/\tau_\ell\bigr)^{2N-2}
	\\
	+(|j|/\tau_\ell)^{2N}\frac{1}{\tau_\ell^2-|j|^2}\]
and expand $\frac{1}{\tau_\ell^2-|k|^2}$ similarly. Then we split the product of the two expansions  into three parts, and handle them seperately in Cases 3.2.1, 3.2.2, 3.2.3. The first two cases follow from Lemma \ref{delta1}. To handle Case 3.2.3, we split it into three cases (see \eqref{hde1}): Cases 3.2.3a, 3.2.3b, 3.2.3c. Only Case 3.2.3a is handled by the heat kernel estimates (Lemma \ref{heatk}), while the remaining cases still follow from  Lemma \ref{delta1}.

Moreover, Lemma \ref{sogge888} (spectral projection bounds) and Corollary \ref{roub} (rough eigenfunction bounds) are basic tools  applied to all the cases.

	\subsection*{\textbf{Case 1}} $\tau_\ell\approx |k|$.
	
	Fix a smooth bump function $\phi \in C_0^\infty (\mathbb{R})$ satisfying $\1_{[0.9,1.1]}\le \phi\le \1_{[0.8,1.2]}$. Our current task is to show that 
	\begin{equation}\label{h.1.1}
		|\sum_{j, k}\sum_{\tau_\ell> 2\la}\phi(|k|/\tau_\ell) \int a_{jk\tau_\ell}\,e_j^0(x)V_{jk}\overline{e_k^0(z)}V(z)e_{\tau_\ell}(z) \overline{e_{\tau_\ell}(x)}dz| 
		\ls \la^{n-\frac32\eta}.
	\end{equation}
	Let $\tau_\ell\in [2^s,2^{s+1}]$, $2^s>2\la$.  In this case, we have $0.8\tau_\ell\le |k|\le 1.2\tau_\ell$, and then  $|k|\approx 2^s$.
	If $|j|>1.3\tau_\ell$, then $|j|>\frac{13}{12}|k|$ and by \eqref{hda}
	\[|a_{jk\tau_\ell}|\ls |j|^{-2}2^{-Ns}, \ \forall N. \]
	If $0.7\tau_\ell\le |j|\le 1.3\tau_\ell$, then by the mean value theorem, if we argue as in the proof of \eqref{meanvalue 1}, one can show that
	\[|a_{jk\tau_\ell}|\ls 2^{-Ns},\ \forall N.\]
	If $|j|<0.7\tau_\ell$, then similarly by the mean value theorem, 
	\[|a_{jk\tau_\ell}|\ls 2^{-4s}.\]
	Let
	\[b_{k,\tau_\ell}=\sum_j \phi(|k|/\tau_\ell)
	a_{jk\tau_\ell} \,e_j^0(x)V_{jk}.\]
Combining the estimates on $a_{jk\tau_\ell}$ with the bound \eqref{vjkb} on $V_{jk}$, we get
	\[|b_{k,\tau_\ell}|\ls \la^n2^{(-n-2-\eta)s}\]
	and similarly
	\[|\partial_{\tau_\ell}b_{k,\tau_\ell}|\ls \la^n2^{(-n-2-\eta)s}2^{-s}.\]
	Thus by Sobolev inequality, we have
	\begin{equation}\label{h.1.13}
		\begin{aligned}
			\|\sum_{|k|\approx 2^s} b_{k,2^s}e_k^0(\cdot)\|_{L^{p_0}(M)}&\ls  2^{s/2}2^{\sigma(p_0)s}\cdot 2^{ns/2}\cdot \la^n2^{(-n-2-\eta)s}\\ &
			\ls \la^n 2^{(-n/2-1-\frac54\eta)s} 
		\end{aligned}
	\end{equation}
	And similarly for $\tau\in[2^s,2^{s+1}]$
	\begin{equation}\label{h.1.14}
		\|\sum_{|k|\approx 2^s}\partial_{\tau}  b_{k,\tau}e_k^0(\cdot)\|_{L^{p_0}(M)}\ls \la^n 2^{(-n/2-1-\frac54\eta)s} 2^{-s}. 
	\end{equation}
	In view of \eqref{h.1.13} and \eqref{h.1.14}, by applying Lemma~\ref{delta1} with $\delta=2^s$, we have 
	\begin{equation}\label{h.1.15}
		\begin{aligned}
			|\int &\sum_{\tau_\ell\in  [2^{s}, 2^{s+1}]}\sum_{k} b_{k,\tau_\ell}\overline{e_k^0(z)}V(z)e_{\tau_\ell}(z) \overline{e_{\tau_\ell}(x)}dz |\\
			\ls & \|V\|_{L^{\frac{n}{2-\eta}}(M)} \la^n 2^{(-n/2-1-\frac54\eta)s} \cdot 2^{s/2}2^{\sigma(p_0)s}  ( \sum_{\tau_\ell\in  [2^{s}, 2^{s+1}]}|e_{\tau_\ell}(x)|^2 )^{1/2}  \\
			\ls &\|V\|_{L^{\frac{n}{2-\eta}}(M)} \la^n 2^{(-n/2-1-\frac54\eta)s}\cdot 2^{s/2}2^{(\frac12-\frac14\eta)s} 2^{ns/2} .
		\end{aligned}
	\end{equation}
	Summing over $s$, we get the desired bound $\la^{n-\frac32\eta}$.

	\subsection*{\textbf{Case 2}} $\tau_\ell\ls |k|$.
	
	Let $\rho_1=(1-\phi)\1_{(1,\infty)}\in C^\infty(\mathbb{R})$. We have $\1_{[1.2,\infty)}\le \rho_1\le \1_{[1.1,\infty)}$. Our goal in this section is 
	\begin{equation}\label{h.2.1}
		|\sum_{j, k}\sum_{\tau_\ell> 2\la}\rho_1(|k|/\tau_\ell) \int a_{jk\tau_\ell}\,e_j^0(x)V_{jk}\overline{e_k^0(z)}V(z)e_{\tau_\ell}(z) \overline{e_{\tau_\ell}(x)}dz|
		\ls\la^{n-\frac32\eta}\log\la.
	\end{equation}
	Let $\tau_\ell\in [2^s,2^{s+1}]$, $2^s>2\la$.  In this case, we have $|k|\ge 1.1\tau_\ell>2.2\la$.
	We just need to estimate 
	\[	\sum_{\tau_\ell\in[2^s,2^{s+1}]}\sum_{j, k}\rho_1(|k|/\tau_\ell)\beta_{\ell_1}(|k|)\beta_{\ell_2}(|j|) \int a_{jk\tau_\ell}\,e_j^0(x)V_{jk}\overline{e_k^0(z)}V(z)e_{\tau_\ell}(z) \overline{e_{\tau_\ell}(x)}dz\]for $\ell_1\ge s, \ell_2\ge\ell_0$.
	We consider two cases separately: $2^{\ell_2}\le 2^{s+2}$, $2^{\ell_2}> 2^{s+2}$.

	\subsection*{\textbf{Case 2.1}} $2^{\ell_2}\le 2^{s+2}$.
	
	In this case, we have $|k|\pm\tau_\ell\approx |k|+|j|\approx |k|\approx 2^{\ell_1}$, $\tau_\ell+|j|\approx 2^s$. So we get for all $N$,
	\[|a_{jk\tau_\ell}^0|\ls  \begin{cases}2^{-4\ell_1},\ \ \ \ \ \ \ \ \ \  |j|\le 1.2\la\\
		2^{-4\ell_1}2^{-N\ell_2},\ \ \ 1.2\la<|j|<0.8|k|\\
		2^{-N\ell_1},\  \ \ \ \ \ \ \ \  |j|\ge 0.8|k|
	\end{cases}\]
	and
	\[|a_{jk\tau_\ell}^1|\ls \begin{cases}2^{-2\ell_1}2^{-2s},\ \ \ \ \ \ \ \ \ \  |j|<1.2\la\\
		2^{-2\ell_1}2^{-2s}2^{-N\ell_2},\ \ \ 1.2\la<|j|<0.8\tau_\ell\\
		2^{-2\ell_1}2^{-Ns},\ \ \ \ \  \ \ \ \  |j|\ge 0.8\tau_\ell.
	\end{cases}\]
	So we have
	\[|a_{jk\tau_\ell}|\ls \begin{cases}2^{-2\ell_1}2^{-2s},\ \ \ \ \ \ \ \ \ \ \  |j|\le1.2\la\\
		2^{-2\ell_1}2^{-2s}2^{-N\ell_2},\ \ \ 1.2\la<|j|<0.8|k|\\
		2^{-2\ell_1}2^{-Ns},\ \ \ \ \ \ \ \ \ \ |j|\ge 0.8|k|.
	\end{cases}\]
	Let 
	\[b_{k,\tau_\ell}=\sum_{j}\rho_1(|k|/\tau_\ell)\beta_{\ell_1}(|k|)\beta_{\ell_2}(|j|) a_{jk\tau_\ell}\,e_j^0(x)V_{jk}.\]
Combining the estimates on $a_{jk\tau_\ell}$ with the bound \eqref{vjkb} on $V_{jk}$, we have
	\[|b_{k,\tau_\ell}|\ls \la^n2^{(-n-\eta)\ell_1}2^{-2s}\]
	and similarly
	\[|\partial_{\tau_\ell}b_{k,\tau_\ell}|\ls \la^n2^{(-n-\eta)\ell_1}2^{-3s}.\]
	By using Sobolev inequality, we have
	\begin{equation}\label{h.2.30}
		\begin{aligned}
			\|\sum_{|k|\approx 2^{\ell_1}} b_{k,2^s}e_k^0(\cdot)\|_{L^{p_0}(M)}
			&\ls 2^{\ell_1/2}2^{\sigma(p_0)\ell_1}2^{n\ell_1/2}\cdot \la^n2^{(-n-\eta)\ell_1} 2^{-2s}\\
			&\approx \la^n2^{(-n/2+1-\frac54\eta)\ell_1}2^{-2s}.
		\end{aligned}
	\end{equation}
	And similarly for $\tau\in[2^s,2^{s+1}]$
	\begin{equation}\label{h.2.31}
		\|\sum_{|k|\approx 2^{\ell_1}}\partial_{\tau}  b_{k,\tau}e_k^0(\cdot)\|_{L^{p_0}(M)}\ls \la^n2^{(-n/2+1-\frac54\eta)\ell_1}2^{-3s}.  \end{equation}
	In view of \eqref{h.2.30}, by applying Lemma~\ref{delta1} with $\delta=2^{s}$, we have
	\begin{equation}\label{h.2.33}
		\begin{aligned}
			|\int \sum_{\tau_\ell\in  [2^s, 2^{s+1}]}\sum_{k} &b_{k,\tau_\ell} e_k^0(z)V(z)e_{\tau_\ell}(z) \overline{e_{\tau_\ell}(x)}dz |\\
			\ls & \|V\|_{L^{\frac{n}{2-\eta}}(M)} \la^n2^{(-n/2+1-\frac54\eta)\ell_1}2^{-2s}\cdot2^{s/2}2^{\sigma(p_0)s}2^{ns/2}\\
			\ls&\|V\|_{L^{\frac{n}{2-\eta}}(M)} \la^n2^{(-n/2+1-\frac54\eta)\ell_1}2^{(n/2-1-\frac14\eta)s}
		\end{aligned}
	\end{equation}
	Note that $-n/2+1-\frac54\eta<0$. Summing over $\ell_1,s,\ell_2$, we get the desired bound $\la^{n-\frac32\eta}\log\la$.

	\subsection*{Case 2.2} $2^{\ell_2}> 2^{s+2}$.
	
	In this case, we have $|j|\ge 2\tau_\ell$, $|k|>1.1\tau_\ell$. So we may apply the same method in Case 4 of the low-frequency section.

	We shall first deal with the terms involving $a^1_{jk\tau_\ell}$. Let
	\begin{equation}\label{h.2.5}\nonumber
		\begin{aligned}
			a^1_{jk\tau_\ell}=&\frac{h(\tau_\ell)}{(\tau_\ell^2-|j|^2)(\tau_\ell^2-|k|^2)}-\frac{h(|j|)}{(\tau_\ell^2-|j|^2)(\tau_\ell^2-|k|^2)}\\
			=&c^0_{jk\tau_\ell}+c^1_{jk\tau_\ell}.
		\end{aligned}
	\end{equation}
	
	The arguments that we shall use to control terms involving $c^0_{jk\tau_\ell}$ and $c^1_{jk\tau_\ell}$ are similar, for simplicity we shall only give the details for 
	$c^0_{jk\tau_\ell}$ here. Recall that by definition, 
	\[V_{jk}=\int_M \overline{e_j^0(z)}e_k^0(z)V(z)dz.\]
	Thus, it suffices to estimate
	\begin{equation}\label{h.2.6}
		\begin{aligned}
			&\sum_{\tau_\ell\in[2^s, 2^{s+1}]}\sum_{j, k}\rho_1(|k|/\tau_\ell)\beta_{\ell_1}(|k|)\beta_{\ell_2}(|j|)  \\
			&\qquad\cdot\iint c^0_{jk\tau_\ell}\,e_j^0(x)\overline{e_j^0(y)}e_k^0(y)V(y)\overline{e_k^0(z)}V(z)e_{\tau_\ell}(z) \overline{e_{\tau_\ell}(x)}dzdy \\
			= &\sum_{\tau_\ell}\iint K_2(\tau_\ell, x, y) K_1(\tau_\ell, y, z)  h(\tau_\ell)V(y)V(z)e_{\tau_\ell}(z) \overline{e_{\tau_\ell}(x)} dzdy
		\end{aligned}
	\end{equation}
	where 
	$$K_1(\tau_\ell, x, y)=\sum_k \rho_1(|k|/\tau_\ell)\frac{\beta_{\ell_1}(|k|)}{\tau_\ell^2-|k|^2}e_k^0(x)\overline{e_k^0(y)}, 
	$$
	and 
	$$K_2(\tau_\ell, x, y)=\sum_j \frac{\beta_{\ell_2}(|j|)}{\tau_\ell^2-|j|^2}e_j^0(x)\overline{e_j^0(y)}.
	$$
By Lemma \ref{pdo}, if we argue as in the proof of \eqref{6.10}, we can show that $K_\nu$ is a pseudo-differential operator and its kernel $K_\nu(\tau_\ell,x,y)$ satisfies
	\begin{equation}\label{h.2.7}
		|K_\nu(\tau_\ell, x, y)|\ls 2^{(n-2)\ell_\nu} (1+2^{\ell_\nu}|x-y|)^{-N},\ \nu=1,2,\,\,\,\forall N.
	\end{equation}

	As a result of \eqref{h.2.7}, by direct calculation, we have
	\begin{equation}\label{h.2.8}
		\sup_{\tau_\ell, x}\|K_\nu(\tau_\ell, x, \cdot)\|_{L^{\frac{n}{n-2+\eta}}(M)}\ls 2^{-\eta\ell_\nu},\ \nu=1,2.
	\end{equation}
	On the other hand, since $\tau_\ell\ge2\la$, by using the rough eigenfunction bounds \eqref{rough} we have
	\begin{equation}\label{h.2.9}
		\sum_{\tau_\ell\in [2^s,2^{s+1}]}\big|h(\tau_\ell)e_{\tau_\ell}(z) \overline{e_{\tau_\ell}(x)} \big|\ls 2^{-Ns},\,\,\,\forall N.
	\end{equation}
	By using H\"older's inequality twice, we have
	\begin{equation}\nonumber
		\begin{aligned}
			\sum_{\tau_\ell\in[2^s,2^{s+1}]}\iint &K_2(\tau_\ell, x, y) K_1(\tau_\ell, y, z)  h(\tau_\ell)V(y)V(z)e_{\tau_\ell}(z) \overline{e_{\tau_\ell}(x)} dzdy \\
			\ls & 2^{-Ns}\|V\|^2_{L^{\frac{n}{2-\eta}}(M)} \sup_{\tau_\ell, x}\|K_2(\tau_\ell, x, \cdot)\|_{L^{\frac{n}{n-2+\eta}}(M)}\sup_{\tau_\ell, y}\|K_1(\tau_\ell, y, \cdot)\|_{L^{\frac{n}{n-2+\eta}}(M)} \\
			\ls & 2^{-Ns} 2^{-\ell_1\eta}2^{-\ell_2\eta}.
		\end{aligned}
	\end{equation}
	After summing over $s, \ell_1, \ell_2$, this gives us a constant bound for large $N$. 
	
	For the other term $c_{jk\tau_\ell}^1$, we just need to replace $K_2$ above by
	\[K_2(\tau_\ell, x, y)=\sum_j \frac{\beta_{\ell_2}(|j|)}{\tau_\ell^2-|j|^2}h(|j|)e_j^0(x)\overline{e_j^0(y)}\]
	and similarly it satisfies
	\[|K_2(\tau_\ell, x, y)|\ls 2^{-Ns}2^{(n-2)\ell_\nu} (1+2^{\ell_\nu}|x-y|)^{-N},\ \forall N.\]Moreover, by using the rough eigenfunction bounds \eqref{rough} we have
\[	\sum_{\tau_\ell\in[2^s,2^{s+1}]}\big|e_{\tau_\ell}(z) \overline{e_{\tau_\ell}(x)} \big|\ls 2^{ns},\,\,\,\forall\, z,x\in M.\]Then we can get the same constant bound for large $N$, by repeating the argument above.
	
	\quad\\
	
	Now we shall deal with the terms involving $a^0_{jk\tau_\ell}$, we shall divide our discussion into three cases: $2^{\ell_1}\approx 2^{\ell_2}$, $2^{\ell_2}>2^{\ell_1+3}$, $2^{\ell_2}< 2^{\ell_1-3}$. Since $|k|,|j|>1.1\tau_\ell$, we have $|k|\approx |j|$, $|k|\ge2|j|$, $|j|\ge2|k|$ respectively in these cases.
	
	\subsection*{(i)} $2^{\ell_1}\approx 2^{\ell_2}$, or in other words $|\ell_1-\ell_2|\le 3$. 
	
	In this case since $|k|, |j|\ge \tau_\ell\ge 2\la$,
	by the mean value theorem, 
	\begin{equation}\label{h.2.10}
		\Big|\frac{h(|k|)-h(|j|)}{|k|^2-|j|^2}\Big|\ls 2^{-N\ell_1},\,\,\, \forall N.
	\end{equation}
	It is easy to see that 
	\begin{equation}\label{h.2.11}
		\begin{aligned}
			|\sum_{\tau_\ell\in[2^s, 2^{s+1}]}&\sum_{ j, k}\rho_1(|k|/\tau_\ell)\beta_{\ell_1}(|k|)\beta_{\ell_2}(|j|) \\
			&\cdot \int a^0_{jk\tau_\ell}\,e_j^0(x)V_{jk}\overline{e_k^0(z)}V(z)e_{\tau_\ell}(z) \overline{e_{\tau_\ell}(x)}dz| \\
			\ls &2^{-N\ell_1}.
		\end{aligned}
	\end{equation}
	Summing over $s,\ell_2,\ell_1$, we get  a constant bound for large $N$.

	\subsection*{(ii)} $2^{\ell_2}> 2^{\ell_1+3}$. 
	
	In this case, $|j|\ge 2|k|\ge2.2\tau_\ell$. Let
	\begin{align*}
		a^0_{jk\tau_\ell}=&\frac{h(|k|)}{(|k|^2-|j|^2)(|k|^2-\tau_\ell^2)}-\frac{h(|j|)}{(|k|^2-|j|^2)(|k|^2-\tau_\ell^2)} \\
		=&d^0_{jk\tau_\ell}+d^1_{jk\tau_\ell}
	\end{align*}
	Note that
	\begin{equation}\label{h.2.12}
		|h(|j|)|\ls 2^{-N\ell_2},\,\,\, \forall N,
	\end{equation}
	which implies 
	\begin{equation}\label{h.2.13}
		\begin{aligned}
			|&\sum_{\tau\in[2^s, 2^{s+1}]}\sum_{j, k}\rho_1(|k|/\tau_\ell)\beta_{\ell_1}(|k|)\beta_{\ell_2}(|j|) \\
			&\cdot \int d^1_{jk\tau_\ell}\,e_j^0(x)V_{jk}\overline{e_k^0(z)}V(z)e_{\tau_\ell}(z) \overline{e_{\tau_\ell}(x)}dz| \\
			&\ls2^{-N\ell_2}.
		\end{aligned}
	\end{equation}
	Summing over $s,\ell_1,\ell_2$, we get a constant bound for large $N$.
	
	So it suffices to estimate
	\begin{equation}\label{h.2.14}
		\begin{aligned}
			&\sum_{\tau\in[2^s, 2^{s+1}]}\sum_{j, k}\rho_1(|k|/\tau_\ell)\beta_{\ell_1}(|k|)\beta_{\ell_2}(|j|)
			\\ 
			&\qquad\cdot\iint d^0_{jk\tau_\ell}\,e_j^0(x)\overline{e_j^0(y)}e_k^0(y)V(y)\overline{e_k^0(z)}V(z)e_{\tau_\ell}(z) \overline{e_{\tau_\ell}(x)}dzdy \\
			&= \sum_{k, \tau_\ell}\iint K_2(\tau_\ell, k, x, y) K_1(\tau_\ell, k, y, z)  V(y)V(z)e_{\tau_\ell}(z) \overline{e_{\tau_\ell}(x)} dzdy
		\end{aligned}
	\end{equation}
	where 
	$$K_2(\tau_\ell, k, x, y)=\sum_j \frac{\beta_{\ell_2}(|j|)}{|k|^2-|j|^2}e_j^0(x)\overline{e_j^0(y)},
	$$
	and 
	$$K_1(\tau_\ell, k, x, y)=\rho_1(|k|/\tau_\ell)\frac{\beta_{\ell_1}(|k|)}{|k|^2-|\tau_\ell|^2}h(|k|)e_k^0(y)\overline{e_k^0(z)}.
	$$
By Lemma \ref{pdo}, if we argue as in the proof of \eqref{6.10}, we can show that $K_2$ is a pseudo-differential operator with the kernel
	\begin{equation}\label{h.2.15}
		|K_2(\tau_\ell, k, x, y)|\ls 2^{(n-2)\ell_2} (1+2^{\ell_2}|x-y|)^{-N},\,\,\,\forall\, N,
	\end{equation}
	where the implicit constant is independent of $|k|$.
	On the other hand, 
	\begin{equation}\nonumber
		|h(|k|)|\ls 2^{-N\ell_1},\,\,\, \forall N,
	\end{equation}
	which implies 
	\begin{equation}\label{h.2.16}
		\sup_{\tau_\ell, k, x, y}|K_1(\tau_\ell, k, x, y)|\ls  2^{-N\ell_1},\,\,\, \forall N.
	\end{equation}
	By H\"older's inequality and the fact that 
	\begin{equation}\label{h.2.17}
		\sum_{\tau_\ell\in[2^s, 2^{s+1}]}\big|e_{\tau_\ell}(z) \overline{e_{\tau_\ell}(x)} \big|\ls 2^{ns},\,\,\,\forall z,x\in M,
	\end{equation}
	we have
	\begin{equation}\nonumber
		\begin{aligned}
			\sum_{|k|\approx 2^{\ell_1}}\sum_{\tau_\ell\approx 2^s}\iint &K_2(\tau_\ell, k, x, y) K_1(\tau_\ell, k, y, z)  V(y)V(z)e_{\tau_\ell}(z) \overline{e_{\tau_\ell}(x)} dzdy \\
			\ls & 2^{n\ell_1}2^{ns}\,\|V\|_{L^{\frac{n}{2-\eta}}(M)}\|V\|_{L^{1}(M)} \sup_{\tau_\ell, x}\|K_2(\tau_\ell,k, x, \cdot)\|_{L^{\frac{n}{n-2+\eta}}(M)}\\
			&\,\,\cdot\sup_{\tau_\ell, y}\|K_1(\tau_\ell, k,y, \cdot)\|_{L^{\infty}(M)} \\
			\ls & 2^{ns} 2^{-N\ell_1}2^{-\eta\ell_2}.
		\end{aligned}
	\end{equation}
	Summing over $\ell_1, \ell_2,s$, we get a constant bound for large $N$.

	\subsection*{(iii)} $2^{\ell_2}< 2^{\ell_1-3}$.
	
	In this case, $|k|\ge 2|j|\ge4\tau_\ell$. Let
	\begin{align*}
		a^0_{jk\tau_\ell}=&\frac{h(|k|)}{(|k|^2-|j|^2)(|k|^2-\tau_\ell^2)}-\frac{h(|j|)}{(|k|^2-|j|^2)(|k|^2-\tau_\ell^2)} \\
		=&d^0_{jk\tau_\ell}+d^1_{jk\tau_\ell}
	\end{align*}
	Note that 
	\begin{equation}\label{h.2.18}
		|h(|k|)|\ls 2^{-N\ell_1},\,\,\, \forall N,
	\end{equation}
	which implies 
	\begin{equation}\label{h.2.19}
		\begin{aligned}
			|\sum_{\tau\in[2^s, 2^{s+1}]}&\sum_{j, k}\rho_1(|k|/\tau_\ell)\beta_{\ell_1}(|k|)\beta_{\ell_2}(|j|) \\
			&\cdot \int d^0_{jk\tau_\ell}\,e_j^0(x)V_{jk}\overline{e_k^0(z)}V(z)e_{\tau_\ell}(z) \overline{e_{\tau_\ell}(x)}dz| \\
			\ls &2^{-N\ell_1},\,\,\, \forall\,N.
		\end{aligned}
	\end{equation}
	Summing over $\ell_1$, $\ell_2,s$ gives us a constant bound for large $N$.
	
	So it suffices to estimate
	\begin{equation}\label{h.2.20}
		\begin{aligned}
			\big|&\sum_{\tau\in[2^s, 2^{s+1}]}\sum_{j, k}\rho_1(|k|/\tau_\ell)\beta_{\ell_1}(|k|)\beta_{\ell_2}(|j|)
			\\ 
			&\qquad\cdot\iint d^1_{jk\tau_\ell}\,e_j^0(x)\overline{e_j^0(y)}e_k^0(y)V(y)\overline{e_k^0(z)}V(z)e_{\tau_\ell}(z) \overline{e_{\tau_\ell}(x)}dzdy\big| \\
			&=\sum_{\tau_\ell, j}\iint K_2(\tau_\ell, j, x, y) K_1(\tau_\ell, j, y, z)  V(y)V(z)e_{\tau_\ell}(z) \overline{e_{\tau_\ell}(x)} dzdy
		\end{aligned}
	\end{equation}
	where 
	$$K_2(\tau_\ell, j,x, y)=\beta_{\ell_2}(|j|)h(|j|)e_j^0(x)\overline{e_j^0(y)},
	$$
	and 
	$$K_1(\tau_\ell, j, y, z)=\sum_k \rho_1(|k|/\tau_\ell)\frac{\beta_{\ell_1}(|k|)}{(|k|^2-|j|^2)(|k|^2-|\tau_\ell|^2)}e_k^0(y)\overline{e_k^0(z)}.
	$$
	By Lemma \ref{pdo}, if we argue as in the proof of \eqref{6.10}, we can show that $K_1$ is a pseudo-differential operator with kernel
	\begin{equation}\label{h.2.21}
		|K_1(\tau_\ell, j, x, y)|\ls 2^{(n-4)\ell_1} (1+2^{\ell_1}|x-y|)^{-N}\,\,\,\forall\, N,
	\end{equation}
	where the implicit constant is independent of $|j|$.
	On the other hand, 
	\begin{equation}\nonumber
		|h(|j|)|\ls 2^{-N\ell_2},\,\,\, \forall N,
	\end{equation}
	which implies 
	\begin{equation}\label{h.2.22}
		\sup_{\tau_\ell, j, y, z}|K_2(\tau_\ell, j, y, z)|\ls  2^{-N\ell_2},\,\,\, \forall\,\,\,N.
	\end{equation}
	By H\"older's inequality and \eqref{h.2.17}, we have
	\begin{equation}\nonumber
		\begin{aligned}
			&\sum_{|j|\approx 2^{\ell_2}}\sum_{\tau_\ell\approx 2^s}\iint K_2(\tau_\ell, j, x, y) K_1(\tau_\ell, j, y, z)  V(y)V(z)e_{\tau_\ell}(z) \overline{e_{\tau_\ell}(x)} dzdy \\
			\ls & 2^{n\ell_2}2^{ns}\,\|V\|_{L^{\frac{n}{2-\eta}}(M)}\|V\|_{L^{1}(M)} \sup_{\tau_\ell,j, x}\|K_2(\tau_\ell,j, x, \cdot)\|_{L^{\infty}(M)}\cdot\sup_{\tau_\ell,j, y}\|K_1(\tau_\ell, j,y, \cdot)\|_{L^{\frac{n}{n-2+\eta}}(M)} \\
			\ls & 2^{ns} 2^{-N\ell_2}2^{-(\eta+2)\ell_1}.
		\end{aligned}
	\end{equation}
	Summing over $\ell_1, \ell_2,s$, we get a constant bound for large $N$.

	\subsection*{\textbf{Case 3}} $\tau_\ell\gs |k|$.
	
	Let $\rho_2=(1-\phi)\1_{(-\infty,1)}\in C^\infty(\mathbb{R})$. We have $\1_{(-\infty,0.8]}\le \rho_2\le \1_{(-\infty,0.9]}$. Our goal in this section is 
	\begin{equation}\label{h.3.1}
		\begin{aligned}
			|\sum_{j, k}&\sum_{\tau_\ell> 2\la}\rho_2(|k|/\tau_\ell)
			\int a_{jk\tau_\ell}\,e_j^0(x)V_{jk}\overline{e_k^0(z)}V(z)e_{\tau_\ell}(z) \overline{e_{\tau_\ell}(x)}dz| \\
			\ls &\la^{n-\frac32\eta}\log\la.
		\end{aligned}
	\end{equation}
	
	As before, we shall write
	\begin{equation}\label{h.3.3}\nonumber
		\begin{aligned}
			a_{jk\tau_\ell}=&\frac{h(|k|)-h(|j|)}{(|k|^2-|j|^2)(|k|^2-\tau_\ell^2)}+\frac{h(\tau_\ell)-h(|j|)}{(\tau_\ell^2-|j|^2)(\tau_\ell^2-|k|^2)}\\
			=&a^0_{jk\tau_\ell}+a^1_{jk\tau_\ell}.
		\end{aligned}
	\end{equation}
	Let
	\begin{equation}\label{h.3.7}
		b_k=\sum_j \frac{h(|k|)-h(|j|)}{|k|^2-|j|^2}e_j^0(x)V_{jk}.
	\end{equation}
	Then we claim that
	\begin{equation}\label{h.3.7''}
		|b_k|\ls \begin{cases}\la^{-\eta}\log\la,\  \ \ \ \ \ \  |k|\le10\la\\
			\la^n|k|^{-n-\eta},\ \ \ \ \  |k|>10\la.\end{cases}
	\end{equation}
	Indeed, recall the bound \eqref{vjkb} on $V_{jk}$:
	\[|V_{jk}|\ls (1+|j-k|)^{-n+2-\eta}.\]
	For any fixed $k$, the number of $j\in\mathbb{Z}^n$ satisfying $1+|j-k|\approx  2^{m}$ and $||k|-|j||\ls2^{\ell_3}$ is bounded by $ 2^{(n-1)m}2^{\ell_3}$. Here $m\in\mathbb{N}$ and $\ell_3\ge\ell_0$.
	
	If $|k|\le 10\la$ and $|j|>20\la$, then we have
	\[|b_k|\ls \sum_{|j|>20\la}|j|^{-2}|j|^{-n+2-\eta}\ls \la^{-\eta}.\]
	If $|k|\le 10\la$, $|j|\le 20\la$, and $|k|+|j|>\la/2$, then
	\[\begin{aligned}
		|b_k|&=\Big|\sum_{\ell_3\ge\ell_0}\sum_j\beta_{\ell_3}(|j|-|k|) \frac{h(|k|)-h(|j|)}{|k|^2-|j|^2}e_j^0(x)V_{jk}\big|\\
		&\ls \sum_{2^{\ell_3}\ls \la}\sum_{2^m\ls \la}\la^{-1}2^{-\ell_3}2^{(-n+2-\eta)m}\cdot 2^{(n-1)m}2^{\ell_3}\\
		&\ls \la^{-\eta}\log\la.
	\end{aligned}\]
Here we applied the mean value theorem when $||k|-|j||\ls 2^{\ell_0}$.

	If $|k|\le 10\la$, $|j|\le 20\la$, and $|k|+|j|\le\la/2$, then similarly
	\[\begin{aligned}|b_k|&=\Big|\sum_{\ell_3\ge\ell_0}\sum_j\beta_{\ell_3}(|j|-|k|) \frac{h(|k|)-h(|j|)}{|k|^2-|j|^2}e_j^0(x)V_{jk}\big|\\
		&\ls \sum_{2^{\ell_3}\ls \la}\sum_{2^m\ls \la}2^{-\ell_3}\la^{-N}2^{(-n+2-\eta)m}\cdot 2^{(n-1)m}2^{\ell_3}\\
		&\ls \la^{1-\eta-N}\log\la,\ \forall  N.\end{aligned}\]
	If $|k|>10\la$ and $|j|\le 2\la$, then 
	\[|b_k|\ls \sum_{|j|\le2\la}|k|^{-2}|k|^{-n+2-\eta}\ls \la^n|k|^{-n-\eta}.\]
	If $|k|>10\la$, $|j|>2\la$, and $|j|>2|k|$, then
	\[|b_k|\ls \sum_{|j|>2|k|}|j|^{-2}|k|^{-N}|j|^{-n+2-\eta}\ls |k|^{-\eta-N},\ \forall N.\]
	If $|k|>10\la$, $|j|>2\la$, and $|j|<|k|/2$, then
	\[|b_k|\ls \sum_{|j|<|k|/2}|k|^{-2}|j|^{-N}|k|^{-n+2-\eta}\ls |k|^{-n-\eta},\ \forall N.\]
	If $|k|>10\la$, $|j|>2\la$, and $|k|/2\le |j|\le2|k|$, then
	\[\begin{aligned}|b_k|&=\Big|\sum_{\ell_3\ge\ell_0}\sum_j\beta_{\ell_3}(|j|-|k|) \frac{h(|k|)-h(|j|)}{|k|^2-|j|^2}e_j^0(x)V_{jk}\big|\\
		&\ls \sum_{2^{\ell_3}\ls |k|}\sum_{2^m\ls |k|}|k|^{-1}2^{-\ell_3}|k|^{-N}2^{(-n+2-\eta)m}\cdot 2^{(n-1)m}2^{\ell_3}\\
		&\ls |k|^{-\eta-N}\log|k|, \ \forall N.\end{aligned}\]
	Thus the claim \eqref{h.3.7''} is proved.

	For $\ell_1\in \mathbb{N}$ satisfying $\ell_1\ge[\log_2(10\la)]$, we let $I_{\ell_1}=[2^{\ell_1},2^{\ell_1+1})$ if $\ell_1>[\log_2(10\la)]$, and $I_{\ell_1}=[0,10\la)$ if $\ell_1=[\log_2(10\la)]$. Then \eqref{h.3.7''} implies
		\begin{equation}\label{h.3.7'}
		|b_k|\ls  2^{(-n-\eta)\ell_1}\la^n \log\la,\ \ {\rm if}\ |k|\in I_{\ell_1}.
	\end{equation}
\[\]
\noindent $\bullet$ \textbf{Handle $a^0_{jk\tau_\ell}$.}
	We first handle the terms involving $a^0_{jk\tau_\ell}.$ For any fixed $|k|\in I_{\ell_1}$,  we write	
	\begin{equation}\label{hde}
	    \begin{aligned}
	        \frac{\rho_2(|k|/\tau_\ell)}{|k|^2-\tau_\ell^2}&=\rho_2(|k|/\tau_\ell)\int_0^\infty e^{t(|k|^2-\tau_\ell^2)}dt\\
		&=\int_0^{2^{-2\ell_1}} e^{t(|k|^2-\tau_\ell^2)}dt -\big(1-\rho_2(|k|/\tau_\ell)\big)\int_0^{2^{-2\ell_1}} e^{t(|k|^2-\tau_\ell^2)}dt\\ &\quad\quad\quad\quad\quad\quad\quad\quad\quad\quad -\frac{\rho_2(|k|/\tau_\ell)e^{2^{-2\ell_1}(|k|^2-\tau_\ell^2)}}{|k|^2-\tau_\ell^2}\\
		&=c^0_{k,\tau_\ell}-c^1_{k,\tau_\ell}-c^2_{k,\tau_\ell},
	    \end{aligned}
	\end{equation}

	Next, we handle these three terms in Cases 3.1.1, 3.1.2, and 3.1.3, respectively.
	\subsection*{Case 3.1.1} 
	
	In this case, we shall deal with the term $c^0_{k,\tau_\ell}$. Our goal is to show
	\begin{equation}\label{h.3.4}
		\begin{aligned}
			|&\sum_{k}\sum_{j}\sum_{\tau_\ell> 2\la}\int \frac{h(|k|)-h(|j|)}{|k|^2-|j|^2} c^0_{k,\tau_\ell}\,e_j^0(x)V_{jk}\overline{e_k^0(z)}V(z)e_{\tau_\ell}(z) \overline{e_{\tau_\ell}(x)}dz| \\
			&\ls\la^{n-\frac32\eta}\log\la.
		\end{aligned}
	\end{equation}

	We shall divide our task into proving 
	\begin{equation}\label{h.3.5}
		\begin{aligned}
			|&\sum_{k}\sum_{j}\sum_{\tau_\ell< 2\la}\int \frac{h(|k|)-h(|j|)}{|k|^2-|j|^2} c^0_{k,\tau_\ell}\,e_j^0(x)V_{jk}\overline{e_k^0(z)}V(z)e_{\tau_\ell}(z) \overline{e_{\tau_\ell}(x)}dz| \\
			&\ls\la^{n-\frac32\eta}\log\la
		\end{aligned}
	\end{equation}
	as well as 
	\begin{equation}\label{h.3.6}
		\begin{aligned}
			|&\sum_{k}\sum_{j,\tau_\ell}\int \frac{h(|k|)-h(|j|)}{|k|^2-|j|^2} c^0_{k,\tau_\ell}\,e_j^0(x)V_{jk}\overline{e_k^0(z)}V(z)e_{\tau_\ell}(z) \overline{e_{\tau_\ell}(x)}dz| \\
			&\ls\la^{n-2\eta}\log\la.
		\end{aligned}
	\end{equation}
	Note that by \eqref{h.3.7'} and  Sobolev inequality, we have for $0\le t\le 2^{-2\ell_1}$
	\begin{equation}\label{h.3.8}
		\begin{aligned}
			\|\sum_{|k|\in I_{\ell_1}} b_{k}e^{t|k|^2}e_k^0(\cdot)\|_{L^{p_0}(M)}&\ls  2^{\ell_1/2}2^{\sigma(p_0)\ell_1} \|\sum_{|k|\in I_{\ell_1}} b_{k}e^{t|k|^2}e_k^0(\cdot)\|_{L^{2}(M)} \\ &
			\ls 	\la^n2^{(-\frac n2+1-\frac54\eta)\ell_1}\log\la.
		\end{aligned}
	\end{equation}

	In view of \eqref{h.3.8}, by H\"older's inequality, we have for fixed $0\le t\le 2^{-2\ell_1}$
	\begin{equation}\label{h.3.9}
		\begin{aligned}
			|\int \sum_{|k|\in I_{\ell_1}}\sum_{\tau_\ell\le2\la} &b_{k} e^{t|k|^2}e^{-t\tau_\ell^2}\overline{e_k^0(z)}V(z)e_{\tau_\ell}(z) \overline{e_{\tau_\ell}(x)}dz |\\
			\ls & \|V\|_{L^{\frac{n}{2-\eta}}(M)} \|\sum_{|k|\in I_{\ell_1}} b_{k}e^{t|k|^2}e_k^0(\cdot)\|_{L^{p_0}(M)}\cdot\la^{1/2}\la^{\sigma(p_0)}( \sum_{\tau_\ell \le 2\la}|e_{\tau_\ell}(x)|^2 )^{1/2}  \\
			\ls &
				\la^{3n/2+1-\frac14\eta}2^{(-\frac n2+1-\frac54\eta)\ell_1}\log\la.
		\end{aligned}
	\end{equation} Integrating over $0\le t\le 2^{-2\ell_1}$  and  summing over $2^{\ell_1}\gs \la$, we obtain \eqref{h.3.5}.
	
	To prove \eqref{h.3.6}, we shall need the heat kernel bounds in Lemma \ref{heatk}. By \eqref{h.3.7'} and the heat kernel bounds, we have for 
	\begin{align}\label{ker02} \Big| \sum_{|k|\in I_{\ell_1}}\sum_{\tau_\ell}\int_0^{2^{-2\ell_1}} &b_{k} e^{t|k|^2}e^{-t\tau_\ell^2}\overline{e_k^0(z)}e_{\tau_\ell}(z) \overline{e_{\tau_\ell}(x)}dt\Big|\\ \nonumber
		&\ls \int_0^{2^{-2\ell_1}} t^{-\frac n2}e^{-cd_g(z, x)^2/t}dt\cdot\sum_{|k|\in I_{\ell_1}}|b_k| \\\nonumber
		&\ls 
		K(z, x)\cdot 
			\la^n 2^{-\eta\ell_1}\log\la.
	\end{align}
	where
	$$K(z, x)= \begin{cases}
		\log(2+(2^{\ell_1}  d_g(z, x))^{-1})(1+2^{\ell_1} d_g(z, x))^{-N},\ n=2\\\nonumber
		d_g(z, x)^{2-n}(1+2^{\ell_1}  d_g(z, x))^{-N},\quad\quad\quad\quad\quad\ \  n\ge3
	\end{cases}
	$$		 
	It is straightforward to check that $\|K(\cdot, x)\|_{L^{\frac{n}{n-2+\eta}}}\ls 2^{-\eta\ell_1}$. Thus by H\"older's inequality, we have
	\begin{equation}\label{h.3.10}
		\begin{aligned}
			\Big|\int \sum_{|k|\in I_{\ell_1}}\sum_{\tau_\ell}\int_0^{2^{-2\ell_1}} &b_{k} e^{-t\tau_\ell^2}\overline{e_k^0(z)}V(z)e_{\tau_\ell}(z) \overline{e_{\tau_\ell}(x)}dz \Big|\\
			\ls &\|V\|_{L^{\frac{n}{2-\eta}}(M)} \cdot 
				\la^n 2^{-2\eta\ell_1}\log\la.
		\end{aligned}
	\end{equation}
	Summing over $2^{\ell_1}\gs \la$, we obtain \eqref{h.3.6}.
	
	\subsection*{Case 3.1.2} 
	
	In this case, we shall deal with the term $c^1_{k,\tau_\ell}$. Our goal is to show
	\begin{equation}\label{h.3.11}
		\begin{aligned}
			|&\sum_{k}\sum_{j}\sum_{\tau_\ell>2\la}\int \frac{h(|k|)-h(|j|)}{|k|^2-|j|^2} c^1_{k,\tau_\ell}\,e_j^0(x)V_{jk}\overline{e_k^0(z)}V(z)e_{\tau_\ell}(z) \overline{e_{\tau_\ell}(x)}dz| \\
			&\ls\la^{n-\frac32\eta}\log\la.
		\end{aligned}
	\end{equation}
	For $b_k$ defined as in \eqref{h.3.7}, let 
	\begin{equation}\label{h.3.12}
		b_{k,\tau_\ell}=b_k (1-\rho_2(|k|/\tau_\ell)).
	\end{equation}
	Let $|k|\in I_{\ell_1}, \tau_\ell\approx 2^s$, $2\la<2^s\ls2^{\ell_1}$, $|k|\ge0.8\tau_\ell$. Then by \eqref{h.3.7'} we have 
	\begin{equation}\label{h.3.13}
		|b_{k,\tau_\ell}|\ls 
			\la^n2^{(-n-\eta)\ell_1}\log \la
	\end{equation}
	as well as
	\begin{equation}\label{h.3.14}
		|\partial_{\tau_\ell}b_{k,\tau_\ell}|\ls 2^{-s}\cdot
			\la^n2^{(-n-\eta)\ell_1}\log\la.
	\end{equation}
	Thus by Sobolev inequality, we have for $0\le t\le 2^{-2\ell_1}$
	\begin{align*}
		\|\sum_{|k|\in I_{\ell_1}} b_{k, 2^s}e^{t|k|^2}e_k^0(\cdot)\|_{L^{p_0}(M)} &\ls 2^{\ell_1/2}2^{\sigma(p_0)\ell_1}	\|\sum_{|k|\in I_{\ell_1}} b_{k, 2^s}e^{t|k|^2}e_k^0(\cdot)\|_{L^{2}(M)} \\
		&\ls  2^{(n/2+1-\frac14\eta)\ell_1}\cdot \sup_{k,\tau_\ell}|b_{k,\tau_\ell}|.
	\end{align*}
	Similarly, we may obtain the estimates of $\partial_\tau b_{k,\tau}$.
	By applying Lemma~\ref{delta1} with $\delta=2^s$, we have for $0\le t\le 2^{-2\ell_1}$,
	\begin{align*}
		|&\int \sum_{\tau_\ell\approx 2^s }\sum_{|k|\in I_{\ell_1}} b_{k,\tau_\ell}e^{t|k|^2}e^{-t\tau_\ell^2} \overline{e_k^0(z)}V(z)e_{\tau_\ell}(z) \overline{e_{\tau_\ell}(x)}dz |\\
		\ls & \|V\|_{L^{\frac{n}{2-\eta}}(M)} ( \sum_{\tau_\ell\approx 2^s}|{e_{\tau_\ell}(x)}|^2 )^{1/2}\cdot 2^{s/2}2^{\sigma(p_0)s}\cdot 2^{(n/2+1-\frac14\eta)\ell_1}\cdot 
			\la^n2^{(-n-\eta)\ell_1}\log\la\\
		\ls &2^{(n/2+1-\frac14\eta)s} 2^{(n/2+1-\frac14\eta)\ell_1}\cdot 
			\la^n2^{(-n-\eta)\ell_1}\log\la.
	\end{align*}
	Note that $2\la\le 2^s\ls 2^{\ell_1}$. Integrating over $0\le t\le 2^{-2\ell_1}$ and summing over $\ell_1,s$, we obtain \eqref{h.3.11}.

	\subsection*{Case 3.1.3} 
	
	In this case, we shall deal with the term $c^2_{k,\tau_\ell}$. Our goal is to show
	\begin{equation}\label{h.3.16}
		\begin{aligned}
			|&\sum_{k}\sum_{j}\sum_{\tau_\ell> 2\la}\int \frac{h(|k|)-h(|j|)}{|k|^2-|j|^2} c^2_{k,\tau_\ell}\,e_j^0(x)V_{jk}\overline{e_k^0(z)}V(z)e_{\tau_\ell}(z) \overline{e_{\tau_\ell}(x)}dz| \\
			&\ls\la^{n-\frac32\eta}\log\la.
		\end{aligned}
	\end{equation}
	For $b_k$ defined as in \eqref{h.3.7}, let 
	\begin{equation}\label{h.3.17}
		b_{k,\tau_\ell}=\frac{b_k \rho_2(|k|/\tau_\ell)}{|k|^2-\tau_\ell^2}.
	\end{equation}
	Let $|k|\in I_{\ell_1}, \tau_\ell\approx 2^s$, $2\la<2^s$, $2^{\ell_1}\ls 2^s$, $|k|\le0.9\tau_\ell$. 
	Then by \eqref{h.3.7'} we have 
	\begin{equation}\label{h.3.13'}
		|b_{k,\tau_\ell}|\ls 2^{-2s}\cdot
			\la^n2^{(-n-\eta)\ell_1}\log\la
	\end{equation}
	as well as
	\begin{equation}\label{h.3.14'}
		|\partial_{\tau_\ell}b_{k,\tau_\ell}|\ls 2^{-3s}\cdot
			\la^n2^{(-n-\eta)\ell_1}\log\la.
	\end{equation}
	Thus by Sobolev inequality, we have
	\begin{align*}
		\|\sum_{|k|\in I_{\ell_1}} b_{k, 2^s}e^{2^{-2\ell_1}|k|^2}e_k^0(\cdot)\|_{L^{p_0}(M)}&\ls 2^{\ell_1/2}2^{\sigma(p_0)\ell_1}	\|\sum_{|k|\in I_{\ell_1}} b_{k, 2^s}e^{2^{-2\ell_1}|k|^2}e_k^0(\cdot)\|_{L^{2}(M)} \\
		&\ls  2^{(n/2+1-\frac14\eta)\ell_1}\cdot\sup_{k,\tau_\ell}|b_{k,\tau_\ell}|
	\end{align*}
	Similarly, we may obtain the estimates of $\partial_\tau b_{k,\tau}$.
	By applying Lemma~\ref{delta1} with $\delta=2^{s}$, we have for 
	\begin{align*}
		|\int &\sum_{\tau_\ell\approx 2^s}\sum_{|k|\in I_{\ell_1}} b_{k,\tau_\ell}e^{2^{-2\ell_1}|k|^2}e^{-2^{-2\ell_1}\tau_\ell^2} \overline{e_k^0(z)}V(z)e_{\tau_\ell}(z) \overline{e_{\tau_\ell}(x)}dz |\\
		\ls & \|V\|_{L^{\frac{n}{2-\eta}}(M)} (\sum_{\tau_\ell\approx 2^s}|{e_{\tau_\ell}(x)}|^2 )^{1/2}2^{s/2}2^{\sigma(p_0)s} e^{-2^{-2\ell_1}2^{2s}}\cdot 2^{(n/2+1-\frac14\eta)\ell_1}2^{-2s}\\
		&\,\,\cdot
			\la^n2^{(-n-\eta)\ell_1}\log\la \\
		\ls & (2^{-\ell_1}2^s)^{-N}2^{(n/2+1-\frac14\eta)\ell_1}2^{(n/2-1-\frac14\eta)s}\cdot
			\la^n2^{(-n-\eta)\ell_1}\log\la,\ \forall N. 
	\end{align*}
	Note that $\la\ls 2^{\ell_1}\ls 2^s$. Summing over $\ell_1,s$ with $N$ large enough, we obtain \eqref{h.3.16}.
\[\]
\noindent $\bullet$ \textbf{Handle $a^1_{jk\tau_\ell}$.}	Now we shall handle the terms involving $a^1_{jk\tau_\ell}.$ It suffices to show \begin{equation}\label{h.3.22}
		\begin{aligned}
			|\sum_{j, k}&\sum_{\tau_\ell\ge 2\la}(1-\rho_2(|j|/\tau_\ell))\rho_2(|k|/\tau_\ell)
			\cdot \int a^1_{jk\tau_\ell}\,e_j^0(x)V_{jk}\overline{e_k^0(z)}V(z)e_{\tau_\ell}(z) \overline{e_{\tau_\ell}(x)}dz| \\
			\ls &\la^{-N},\,\,\,\forall N,
		\end{aligned}
	\end{equation}
and 
	\begin{equation}\label{h.3.23}
		\begin{aligned}
			|\sum_{j, k}&\sum_{\tau_\ell\ge 2\la}\rho_2(|j|/\tau_\ell)\rho_2(|k|/\tau_\ell)
			\cdot \int a^1_{jk\tau_\ell}\,e_j^0(x)V_{jk}\overline{e_k^0(z)}V(z)e_{\tau_\ell}(z) \overline{e_{\tau_\ell}(x)}dz| \\
			\ls &\la^{n-\frac32\eta}.
		\end{aligned}
	\end{equation}First,  let
\[
		b_{k,\tau_\ell}=	\sum_j a_{jk\tau_\ell}^1e_j^0(x)V_{jk}(1-\rho_2(|j|/\tau_\ell))\rho_2(|k|/\tau_\ell).\]
	In this case, $|j|\ge 0.8\tau_\ell$ and $|k|\le0.9\tau_\ell$, so $|j|>|k|/2$. We claim that 
	\begin{equation}\label{h.3.21}
		|b_{k,\tau_\ell}|\ls \tau_\ell^{-N},\ \forall N.
	\end{equation}
	Indeed, if $|j|>2|k|$, we have
	\[|b_{k,\tau_\ell}|\ls \sum_{|j|>0.8\tau_\ell}\tau_\ell^{-2}|j|^{-1}\tau_\ell^{-N}|j|^{-n+2-\eta}\ls \tau_\ell^{-N}.\]
	If $|j|\le2|k|$, we have $|j|\approx |k|\approx \tau_\ell$, and then
	\[|b_{k,\tau_\ell}|\ls \sum_{2^m\ls \tau_\ell}\tau_\ell^{-3}\tau_\ell^{-N}2^{(-n+2-\eta)m}2^{nm}\ls \tau_\ell^{-N}.\]
	Thus the claim \eqref{h.3.21} is proved.
	Then by rough eigenfunction bounds \eqref{rough}
	\begin{align*}
		\Big|\sum_{|k|<0.9\tau_\ell}\sum_{\tau_\ell> 2\la}&\int b_{k,\tau_\ell}\overline{e_k^0(z)}V(z)e_{\tau_\ell}(z) \overline{e_{\tau_\ell}(x)}dz\Big|\\
		&\ls \|V\|_{L^1(M)}\sup_{z,x}\sum_{\tau_\ell>2\la} \tau_\ell^{-N}\cdot \tau_\ell^n|e_{\tau_\ell}(z) \overline{e_{\tau_\ell}(x)}|\\
		&\ls \sum_{2^m>2\la} 2^{-Nm}\cdot 2^{2nm}\\
		&\ls \la^{2n-N},\ \forall N.
	\end{align*}
	So we obtain \eqref{h.3.22}.
	
	Next, to prove \eqref{h.3.23}, we write
	\begin{equation}\label{h.3.24}\nonumber
		\begin{aligned}
			a^1_{jk\tau_\ell}=&\frac{h(\tau_\ell)}{(\tau_\ell^2-|j|^2)(\tau_\ell^2-|k|^2)}-\frac{h(|j|)}{(\tau_\ell^2-|j|^2)(\tau_\ell^2-|k|^2)}\\
			=&c^0_{jk\tau_\ell}-c^1_{jk\tau_\ell}.
		\end{aligned}
	\end{equation}
	The  terms involving $c_{jk\tau_\ell}^0$ satisfies the same bound as \eqref{h.3.22}. Indeed, if 
	\[b_{k,\tau_\ell}=	\sum_j c_{jk\tau_\ell}^0e_j^0(x)V_{jk}\rho_2(|j|/\tau_\ell)\rho_2(|k|/\tau_\ell)\]
	then by direct calculation, it still satisfies the bound in \eqref{h.3.21}. So we can still get the bound $\la^{-N}$ by the argument above. Thus, it suffices to consider the terms involving $c^1_{jk\tau_\ell}$. We need to show
	\begin{equation}\label{h.3.23'}
		\begin{aligned}
			|\sum_{j, k}&\sum_{\tau_\ell\ge 2\la}\rho_2(|j|/\tau_\ell)\rho_2(|k|/\tau_\ell)
			\cdot \int c^1_{jk\tau_\ell}\,e_j^0(x)V_{jk}\overline{e_k^0(z)}V(z)e_{\tau_\ell}(z) \overline{e_{\tau_\ell}(x)}dz| \\
			\ls &\la^{n-\frac32\eta}.
		\end{aligned}
	\end{equation}
	Note that in this case, we have $|j|,|k|\le0.9\tau_\ell$ on the support of $\rho_2$. We write for $N=1,2,...$,
	\begin{equation}\label{3.2.1.1}
	    	\frac{1}{\tau_\ell^2-|j|^2}=\tau_\ell^{-2}
	+\tau_\ell^{-2}\bigl(|j|/\tau_\ell\bigr)^2+
	\cdots + \tau_\ell^{-2}\bigl(|j|/\tau_\ell\bigr)^{2N-2}
	\\
	+(|j|/\tau_\ell)^{2N}\frac{1}{\tau_\ell^2-|j|^2}
	\end{equation}
	and similarly
\begin{equation}\label{3.2.1.2}
    	\frac{1}{\tau_\ell^2-|k|^2}=\tau_\ell^{-2}
	+\tau_\ell^{-2}\bigl(|k|/\tau_\ell\bigr)^2+
	\cdots + \tau_\ell^{-2}\bigl(|k|/\tau_\ell\bigr)^{2N-2}
	\\
	+(|k|/\tau_\ell)^{2N}\frac{1}{\tau_\ell^2-|k|^2}.
\end{equation}
Then we split the product of \eqref{3.2.1.1} and \eqref{3.2.1.2} into three cases:
\begin{align*}
	\frac{1}{(\tau_\ell^2-|j|^2)(\tau_\ell^2-|k|^2)}=\frac{(|j|/\tau_\ell)^{2N}}{(\tau_\ell^2-|j|^2)(\tau_\ell^2-|k|^2)}+\sum_{\mu=0}^{N-1}\frac{|j|^{2\mu}(|k|/\tau_\ell)^{2N}}{\tau_\ell^{2+2\mu}(\tau_\ell^2-|k|^2)}+\sum_{\mu_1=0}^{N-1}\sum_{\mu_2=0}^{N-1}\frac{|j|^{2\mu_2}|k|^{2\mu_1}}{\tau_\ell^{2(\mu_1+\mu_2+2)}}
\end{align*}	
See Cases 3.2.1, 3.2.2, 3.2.3 in the following.
	\subsection*{Case 3.2.1} 
	We need to show 
	\begin{multline}\label{h.3.2.1}\Big|\sum_{j,k}\sum_{\tau_\ell> 2\la}\rho_2(|j|/\tau_\ell)\rho_2(|k|/\tau_\ell)\frac{h(|j|)(|j|/\tau_\ell)^{2N}}{(\tau_\ell^2-|j|^2)(\tau_\ell^2-|k|^2)}e_j^0(x)V_{jk}\overline{e_k^0(z)}V(z)e_{\tau_\ell}(z) \overline{e_{\tau_\ell}(x)}dz\Big| \\ \ls\la^{n-\frac32\eta}.\end{multline}
	Let  $|k|\in I_{\ell_1}$, $|j|\in I_{\ell_2}$, $\tau_\ell\approx 2^s$, $2^s>2\la$. Here $\ell_1,\ell_2\ge [\log_2(10\la)]$. See the definition of $I_{\ell_1}$ before \eqref{h.3.7'}. The definition of $I_{\ell_2}$ is the same. Let
	\[b_{k,\tau_\ell}=\sum_{|j|\in I_{\ell_2}}\rho_2(|j|/\tau_\ell)\rho_2(|k|/\tau_\ell)\frac{h(|j|)(|j|/\tau_\ell)^{2N}}{(\tau_\ell^2-|j|^2)(\tau_\ell^2-|k|^2)}e_j^0(x)V_{jk}.\]
	Note that \[|h(|j|)|\ls \begin{cases}1,\ \ \ \ \ \ \ \ \ \ \ \ \ \ 2^{\ell_2}\approx10\la\\
		2^{-N_1\ell_2},\  \ \ \ \ \ \ 2^{\ell_2}>10\la,\ \forall N_1.
	\end{cases}\]
	Then we get
	\begin{align*}|b_{k,\tau_\ell}|&\ls \sum_{2^m\le 2^s}2^{2N(\ell_2-s)}2^{-4s}\cdot 2^{(-n+2-\eta)m}\min(2^{nm},2^{n\ell_2})\cdot \begin{cases}1,\ \ \ \ \ \ \ \ \ \ \ \ \ \  2^{\ell_2}\approx10\la\\
			2^{-N_1\ell_2},\ \ \ \ \ \ \ 2^{\ell_2}>10\la
		\end{cases}\\
		&\ls 2^{(-2N-4)s}2^{(2N+2-\eta)\ell_2}\cdot \begin{cases}1,\ \ \ \ \ \ \ \ \ \ \ \ \ \  2^{\ell_2}\approx10\la\\
			2^{-N_1\ell_2},\ \ \ \ \ \  2^{\ell_2}>10\la.
	\end{cases}\end{align*}
	And similarly,
	\begin{align*}|\partial_{\tau_\ell}b_{k,\tau_\ell}|
		&\ls 2^{(-2N-5)s}2^{(2N+2-\eta)\ell_2}\cdot \begin{cases}1,\ \ \ \ \ \ \ \ \ \ \ \ \ \  2^{\ell_2}\approx10\la\\
			2^{-N_1\ell_2},\ \ \ \ \ \ \ 2^{\ell_2}>10\la.
	\end{cases}\end{align*}
	By Sobolev inequality,
	\begin{align*}
		\|\sum_{|k|\in I_{\ell_1}}& b_{k, 2^s}e_k^0(\cdot)\|_{L^{p_0}(M)}\ls 2^{\ell_1/2}2^{\sigma(p_0)\ell_1}	\|\sum_{|k|\in I_{\ell_1}} b_{k, 2^s}e_k^0(\cdot)\|_{L^{2}(M)} \\
		&\ls  2^{(n/2+1-\frac14\eta)\ell_1}\cdot\sup_{k,\tau_\ell}|b_{k,\tau_\ell}|.
	\end{align*}
	And similarly we may obtain the estimate for  $\partial_\tau b_{k,\tau}$.
	
	By applying Lemma~\ref{delta1} with $\delta=2^{s}$, we have 	\begin{align*}
		|\int \sum_{\tau_\ell\approx 2^s}&\sum_{|k|\in I_{\ell_1}} b_{k,\tau_\ell} \overline{e_k^0(z)}V(z)e_{\tau_\ell}(z) \overline{e_{\tau_\ell}(x)}dz |\\
		\ls & \|V\|_{L^{\frac{n}{2-\eta}}(M)}2^{ns/2}2^{s/2}2^{\sigma(p_0)s}\cdot 2^{(-2N-4)s}2^{(2N+2-\eta)\ell_2}2^{(\frac n2+1-\frac14\eta)\ell_1}\\
		&\,\,\cdot\begin{cases}1,\ \ \ \ \ \ \ \ \ \ \ \ \ \  2^{\ell_2}\approx10\la\\
			2^{-N_1\ell_2},\ \ \ \ \ \ \ 2^{\ell_2}>10\la\end{cases}\\
		\ls& 2^{(\frac n2-\frac14\eta-2N-3)s}2^{(2N+2-\eta)\ell_2}2^{(\frac n2+1-\frac14\eta)\ell_1}\cdot\begin{cases}1,\ \ \ \ \ \ \ \ \ \ \ \ \ \ 2^{\ell_2}\approx10\la\\
			2^{-N_1\ell_2},\ \ \ \ \ \ \ 2^{\ell_2}>10\la\end{cases}
	\end{align*}
	Fix $N\ge n$ and $N_1\ge2N+2$. Note that $\la\ls 2^{\ell_1},2^{\ell_2}\ls 2^s$. Summing over $\ell_1,\ell_2,s$, we obtain \eqref{h.3.2.1}.

	\subsection*{Case 3.2.2} 
	
	For $\mu=0,1,...,N-1$, we need to show 
	\begin{multline}\label{h.3.2.2}\Big|\sum_{j,k}\sum_{\tau_\ell> 2\la}\rho_2(|j|/\tau_\ell)\rho_2(|k|/\tau_\ell)\frac{h(|j|)|j|^{2\mu}(|k|/\tau_\ell)^{2N}}{\tau_\ell^{2+2\mu}(\tau_\ell^2-|k|^2)}e_j^0(x)V_{jk}\overline{e_k^0(z)}V(z)e_{\tau_\ell}(z) \overline{e_{\tau_\ell}(x)}dz\Big|\\ \ls\la^{n-\frac32\eta}.
	\end{multline}
	Let  $|k|\in I_{\ell_1}$, $|j|\in I_{\ell_2}$, $\tau_\ell\approx 2^s$, $2^s>2\la$. As before,  $\ell_1,\ell_2\ge [\log_2(10\la)]$.
	
Let
	\[b_{k,\tau_\ell}=\sum_{|j|\in I_{\ell_2}}\rho_2(|j|/\tau_\ell)\rho_2(|k|/\tau_\ell)\frac{h(|j|)|j|^{2\mu}\tau_\ell^{-2-2\mu}(|k|/\tau_\ell)^{2N}}{\tau_\ell^2-|k|^2}e_j^0(x)V_{jk}.\]
We have
	\begin{equation}
	    \begin{aligned}
	      	|b_{k,\tau_\ell}|\ls & 2^{(-2N-4-2\mu)s}2^{2N\ell_1}2^{(2-\eta+2\mu)\ell_2} \\
	      	&\quad\cdot\begin{cases}1,\ \ \ \ \ \ \ \ \ \ \ \ \ \  2^{\ell_1}\le 80\la\\
		2^{-N_1\ell_2},\ \ \ \ \ \ \ \ \ 2^{\ell_1}>80\la\ {\rm and}\ 2^{\ell_2}>2^{\ell_1-2}\\
		2^{(n-2+\eta)(\ell_2-\ell_1)},\ \ 2^{\ell_1}>80\la\ {\rm and}\ 2^{\ell_2}\approx 10\la\\
		2^{(n-2+\eta)(\ell_2-\ell_1)}2^{-N_1\ell_2},\ \ 2^{\ell_1}>80\la\ {\rm and}\ 10\la< 2^{\ell_2}\le  2^{\ell_1-2}.
	\end{cases}  
	    \end{aligned}
	\end{equation}
And also by Sobolev inequality
	\begin{align*}
		\|\sum_{|k|\in I_{\ell_1}}& b_{k, 2^s}e_k^0(\cdot)\|_{L^{p_0}(M)}\ls 2^{\ell_1/2}2^{\sigma(p_0)\ell_1}	\|\sum_{|k|\in I_{\ell_1}} b_{k, 2^s}e_k^0(\cdot)\|_{L^{2}(M)} \\
		&\ls  2^{(n/2+1-\frac14\eta)\ell_1}\cdot\sup_{k,\tau_\ell}|b_{k,\tau_\ell}|.
	\end{align*}
	Similarly, we may obtain the estimates for $\partial_\tau b_{k,\tau}$.
	By applying Lemma~\ref{delta1} with $\delta=2^{s}$, we have 	\begin{align*}
		|\int& \sum_{\tau_\ell\approx 2^s}\sum_{|k|\in I_{\ell_1}} b_{k,\tau_\ell} \overline{e_k^0(z)}V(z)e_{\tau_\ell}(z) \overline{e_{\tau_\ell}(x)}dz | \\
		&\ls \|V\|_{L^{\frac{n}{2-\eta}}(M)}2^{ns/2}2^{s/2}2^{(\frac12-\frac14\eta)s}\cdot2^{(-2N-4-2\mu)s}2^{(2-\eta+2\mu)\ell_2}\\
		&\quad \cdot 2^{(2N+\frac n2+1-\frac14\eta)\ell_1} \cdot\begin{cases}1,\ \ \ \ \ \ \ \ \ \ \ \ \ \  2^{\ell_1}\le 80\la\\
			2^{-N_1\ell_2},\ \ \ \ \ \ \ 2^{\ell_1}>80\la\ {\rm and}\ 2^{\ell_2}>2^{\ell_1-2}\\
			2^{(n-2+\eta)(\ell_2-\ell_1)},\ \ 2^{\ell_1}>80\la\ {\rm and}\ 2^{\ell_2}\approx 10\la\\
			2^{(n-2+\eta)(\ell_2-\ell_1)}2^{-N_1\ell_2},\ \ 2^{\ell_1}>80\la\ {\rm and}\ 10\la< 2^{\ell_2}\le  2^{\ell_1-2}.
		\end{cases}
	\end{align*}
	Fix $N\ge n$ and $N_1\ge2N+2n$. Note that $\la\ls 2^{\ell_1},2^{\ell_2}\ls 2^s$. Summing over $\ell_1,\ell_2,s$, we obtain \eqref{h.3.2.2}.
	\subsection*{Case 3.2.3} 
	
	For $\mu_1,\mu_2=0,1,...,N-1$, we need to show 
	\begin{multline}
	    \Big|\sum_{j,k}\sum_{\tau_\ell> 2\la}\rho_2(|j|/\tau_\ell)\rho_2(|k|/\tau_\ell)\frac{h(|j|)|j|^{2\mu_2}|k|^{2\mu_1}}{\tau_\ell^{2(\mu_1+\mu_2+2)}}e_j^0(x)V_{jk}\overline{e_k^0(z)}V(z)e_{\tau_\ell}(z) \overline{e_{\tau_\ell}(x)}dz\Big| \\ \ls\la^{n-\frac32\eta}.
	\end{multline}
	For any fixed $k\in I_{\ell_1}$ (see the definition of $I_{\ell_1}$ before \eqref{h.3.7'}), we write	
	\begin{equation}\label{hde1}
	    \begin{aligned}
	        \rho_2(|j|/\tau_\ell)\rho_2(|k|/\tau_\ell) &\tau_\ell^{-2(\mu_1+\mu_2+2)}\\
	       & =\rho_2(|j|/\tau_\ell)\rho_2(|k|/\tau_\ell) c_0\int_0^\infty t^{\mu_1+\mu_2+1}e^{-t\tau_k^2}dt\\
		&=c_0\int_0^{2^{-2\ell_1}} t^{\mu_1+\mu_2+1}e^{-t\tau_\ell^2}dt \\
		&\quad-\big(1-\rho_2(|j|/\tau_\ell)\rho_2(|k|/\tau_\ell)\big)c_0\int_0^{2^{-2\ell_1}} t^{\mu_1+\mu_2+1}e^{-t\tau_\ell^2}dt\\ &\quad\quad\quad\quad\quad-\frac{\rho_2(|j|/\tau_\ell)\rho_2(|k|/\tau_\ell)w(\tau_\ell^22^{-2\ell_1})}{2^{2\ell_1(\mu_1+\mu_2+2)}},\\
		&=c^0_{k,\tau_\ell}-c^1_{k,\tau_\ell}-c^2_{k,\tau_\ell}.
	    \end{aligned}
	\end{equation}
 Here
	$c_0$ is a fixed normalizing constant and 
	\begin{equation}\nonumber
		\begin{aligned}
			w(x)=c_0\int_1^{\infty} t^{\mu_1+\mu_2+1}e^{-tx}dt
			\ls e^{-x/2}.
		\end{aligned}
	\end{equation}
Now we handle these three terms in Cases 3.2.3a, 3.2.3b, 3.2.3c respectively.
	
	\subsection*{Case 3.2.3a}  We need to show
	\begin{equation}\label{h.4.1}
		\begin{aligned}
			|&\sum_{j,k}\sum_{\tau_\ell> 2\la}\int h(|j|)|j|^{2\mu_2}|k|^{2\mu_1}c^0_{k,\tau_\ell}\,e_j^0(x)V_{jk}\overline{e_k^0(z)}V(z)e_{\tau_\ell}(z) \overline{e_{\tau_\ell}(x)}dz| \\
			&\ls\la^{n-\frac32\eta}.
		\end{aligned}
	\end{equation}
	We shall divide our task into proving 
	\begin{equation}\label{h.4.2}
		\begin{aligned}
			|&\sum_{j,k}\sum_{\tau_\ell\le2\la}\int h(|j|)|j|^{2\mu_2}|k|^{2\mu_1}c^0_{k,\tau_\ell}\,e_j^0(x)V_{jk}\overline{e_k^0(z)}V(z)e_{\tau_\ell}(z) \overline{e_{\tau_\ell}(x)}dz| \\
			&\ls\la^{n-\frac32\eta}
		\end{aligned}
	\end{equation}
	as well as 
	\begin{equation}\label{h.4.3}
		\begin{aligned}
			|&\sum_{j,k}\sum_{\tau_\ell}\int h(|j|)|j|^{2\mu_2}|k|^{2\mu_1}c^0_{k,\tau_\ell}\,e_j^0(x)V_{jk}\overline{e_k^0(z)}V(z)e_{\tau_\ell}(z) \overline{e_{\tau_\ell}(x)}dz| \\
			&\ls\la^{n-2\eta}.
		\end{aligned}
	\end{equation}
		Let
	\[b_{k}=\sum_{|j|\in I_{\ell_2}}h(|j|)|j|^{2\mu_2}|k|^{2\mu_1}e_j^0(x)V_{jk}.\]
	Then for $|k|\in I_{\ell_1}$, we have
	\begin{align}\label{h.4.5}|b_k|\ls  2^{2\mu_1\ell_1}2^{(2\mu_2+2-\eta)\ell_2}\cdot\begin{cases}1,\ \ \ \ \ \ \ \ \ \ \ \ \ \  2^{\ell_1}\le 80\la\ {\rm and}\ 2^{\ell_2}\approx 10\la\\
			2^{-N_1\ell_2},\ \ \ \ \ \ \ 2^{\ell_1}\le 80\la\ {\rm and}\ 2^{\ell_2}>10\la	\\
			2^{-N_1\ell_2},\ \ \ \ \ \ \ 2^{\ell_1}>80\la\ {\rm and}\ 2^{\ell_2}>2^{\ell_1-2}\\
			2^{(n-2+\eta)(\ell_2-\ell_1)},\ \ 2^{\ell_1}>80\la\ {\rm and}\ 2^{\ell_2}\approx 10\la\\
			2^{(n-2+\eta)(\ell_2-\ell_1)}2^{-N_1\ell_2},\ \ 2^{\ell_1}>80\la\ {\rm and}\ 10\la< 2^{\ell_2}\le  2^{\ell_1-2}.
	\end{cases}\end{align}
	By using Sobolev inequality,
	\begin{align*}
		\|&\sum_{|k|\in I_{\ell_1}} b_{k}e_k^0(\cdot)\|_{L^{p_0}(M)}\ls  2^{\ell_1/2}2^{\sigma(p_0)\ell_1} \|\sum_{|k|\in I_{\ell_1}} b_{k}e_k^0(\cdot)\|_{L^{2}(M)} \\ &
		\ls 2^{(n/2+1-\frac14\eta)\ell_1}\cdot \sup_{k}|b_k|.
	\end{align*}
	Then by H\"older's inequality, we have for fixed $0\le t\le 2^{-2\ell_1}$
	\begin{align*}
		|\int& \sum_{|k|\in I_{\ell_1}}\sum_{\tau_\ell\le2\la} b_{k}t^{\mu_1+\mu_2+1} e^{-t\tau_\ell^2}\overline{e_k^0(z)}V(z)e_{\tau_\ell}(z) \overline{e_{\tau_\ell}(x)}dz |\\
		\ls & \|V\|_{L^{\frac{n}{2-\eta}}(M)} \|\sum_{|k|\in I_{\ell_1}} b_{k}e_k^0(\cdot)\|_{L^{p_0}(M)}\cdot2^{-2\ell_1(\mu_1+\mu_2+1)} \la^{1/2}\la^{\sigma(p_0)}( \sum_{\tau_\ell\in  [1, 2\la]}|e_{\tau_\ell}(x)|^2 )^{1/2}  \\
		\ls &\la^{\frac n2+1-\frac14\eta}2^{(\frac n2-1-\frac14\eta-2\mu_2)\ell_1}2^{(2\mu_2+2-\eta)\ell_2}\\
		&\quad \cdot\begin{cases}1,\ \ \ \ \ \ \ \ \ \ \ \ \ \  2^{\ell_1}\le 80\la\ {\rm and}\ 2^{\ell_2}\approx 10\la\\
			2^{-N_1\ell_2},\ \ \ \ \ \ \  2^{\ell_1}\le 80\la\ {\rm and}\ 2^{\ell_2}>10\la	\\
			2^{-N_1\ell_2},\ \ \ \ \ \ \  2^{\ell_1}>80\la\ {\rm and}\ 2^{\ell_2}>2^{\ell_1-2}\\
			2^{(n-2+\eta)(\ell_2-\ell_1)},\ \ 2^{\ell_1}>80\la\ {\rm and}\ 2^{\ell_2}\approx 10\la\\
			2^{(n-2+\eta)(\ell_2-\ell_1)}2^{-N_1\ell_2},\ \ 2^{\ell_1}>80\la\ {\rm and}\ 10\la< 2^{\ell_2}\le  2^{\ell_1-2}.
		\end{cases}
	\end{align*}
	Fix $N_1\ge 2N+2n$. Integrating over $0\le t\le 2^{-2\ell_1}$ and summing over $2^{\ell_1},2^{\ell_2}\gs\la$, we obtain \eqref{h.4.2}.
	
	To prove \eqref{h.4.3}, by  the heat kernel bounds in Lemma~\ref{heatk}, we have
	\begin{align}\label{ker03}\nonumber
		|\sum_{|k|\in I_{\ell_1}}\sum_{\tau_\ell}\int_0^{2^{-2\ell_1}} &b_{k}\, t^{\mu_1+\mu_2+1}e^{-t\tau_\ell^2}\overline{e_k^0(z)}e_{\tau_\ell}(z) \overline{e_{\tau_\ell}(x)}dt|\\ \nonumber
		&\ls  \int_0^{2^{-2\ell_1}}t^{\mu_1+\mu_2+1} t^{-\frac n2}e^{-cd_g(z, x)^2/t}dt\cdot \sum_{|k|\in I_{\ell_1}}|b_k|\\\nonumber
		&\ls K(z, x)\cdot 2^{n\ell_1}\cdot \sup_{k}|b_k|\end{align}
	where 
	$$K(z, x)=\begin{cases}
		\log(2+(2^{\ell_1} d_g(z, x))^{-1})(1+2^{\ell_1} d_g(z, x))^{-N},\,\,\,\text{if}\,\,\,2\mu_1+2\mu_2+4=n \\
		2^{-(2\mu_1+2\mu_2+4-n)\ell_1}(1+2^{\ell_1} d_g(z, x))^{-N}, \,\,\, \ \ \ \ \  \ \ \text{if}\,\,\, 2\mu_1+2\mu_2+4> n \\
			d_g(z, x)^{2\mu_1+2\mu_2+4-n}(1+2^{\ell_1} d_g(z, x))^{-N}, \,\,\, \ \ \ \ \  \text{if}\,\,\, 2\mu_1+2\mu_2+4< n.
	\end{cases}
	$$				
	It is straightforward to check that 
	$$\|K(\cdot, x)\|_{L^{\frac{n}{n-2+\eta}}}\ls 2^{-(2\mu_1+2\mu_2+2+\eta)\ell_1},$$ 
	then by H\"older's inequality and \eqref{h.4.5}, we have
	\begin{align*}
		|\int \sum_{|k|\in I_{\ell_1}}\sum_{\tau_\ell}&\int_0^{2^{-2\ell_1}}b_{k}\, t^{\mu_1+\mu_2+1}e^{-t\tau_\ell^2}\overline{e_k^0(z)}V(z)e_{\tau_\ell}(z) \overline{e_{\tau_\ell}(x)}dtdz |\\
		\ls &\|V\|_{L^{\frac{n}{2-\eta}}(M)}  2^{(n-2\mu_2-2-\eta)\ell_1}2^{(2\mu_2+2-\eta)\ell_2} \\
		&\quad\cdot\begin{cases}1,\ \ \ \ \ \ \ \ \ \ \ \ \ \  2^{\ell_1}\le 80\la\ {\rm and}\ 2^{\ell_2}\approx 10\la\\
			2^{-N_1\ell_2},\ \ \ \ \ \ 2^{\ell_1}\le 80\la\ {\rm and}\ 2^{\ell_2}>10\la	\\
			2^{-N_1\ell_2},\ \ \ \ \ \ 2^{\ell_1}>80\la\ {\rm and}\ 2^{\ell_2}>2^{\ell_1-2}\\
			2^{(n-2+\eta)(\ell_2-\ell_1)},\ \ 2^{\ell_1}>80\la\ {\rm and}\ 2^{\ell_2}\approx 10\la\\
			2^{(n-2+\eta)(\ell_2-\ell_1)}2^{-N_1\ell_2},\ \ 2^{\ell_1}>80\la\ {\rm and}\ 10\la< 2^{\ell_2}\le  2^{\ell_1-2}.
		\end{cases}
	\end{align*}
	Fix $N_1\ge 2N+2n$. Summing over  $2^{\ell_1},2^{\ell_2}\gs \la$, we obtain \eqref{h.4.3}.

	\subsection*{Case 3.2.3b}  We need to show 
	\begin{equation}\label{h.4.8}
		|\sum_{j,k}\sum_{\tau_\ell> 2\la}\int h(|j|)|j|^{2\mu_2}|k|^{2\mu_1}c^1_{k,\tau_\ell}\,e_j^0(x)V_{jk}\overline{e_k^0(z)}V(z)e_{\tau_\ell}(z) \overline{e_{\tau_\ell}(x)}dz| \ls\la^{n-\frac32\eta}.
	\end{equation}
	In this case, we have $|j|\ge 0.8\tau_\ell$ or $|k|\ge 0.8\tau_\ell$ on the support of $1-\rho_2(|k|/\tau_\ell)\rho_2(|j|/\tau_\ell)$. Let $\tau_\ell\in[2^s,2^{s+1}]$, $2^s>2\la$,
	and for $|j|\in I_{\ell_2}$, let
	\begin{equation}\label{h.4.9}
		b_{k,\tau_\ell}=\sum_{|j|\in I_{\ell_2}} h(|j|)|j|^{2\mu_2}|k|^{2\mu_1}e_j^0(x)V_{jk}(1-\rho_2(|k|/\tau_\ell)\rho_2(|j|/\tau_\ell)).
	\end{equation}
	Then for $|k|\in I_{\ell_1}$, we have
	\begin{align}\label{h.4.10}|b_{k,\tau_\ell}|\ls  2^{2\mu_1\ell_1}2^{(2\mu_2+2-\eta)\ell_2}\cdot\begin{cases}1,\ \ \ \ \ \ \ \ \ \ \ \ \ \  2^{\ell_1}\le 80\la\ {\rm and}\ 2^{\ell_2}\approx 10\la\\
			2^{-N_1\ell_2},\ \ \ \ \ \ \ 2^{\ell_1}\le 80\la\ {\rm and}\ 2^{\ell_2}>10\la	\\
			2^{-N_1\ell_2},\ \ \ \ \ \ \ 2^{\ell_1}>80\la\ {\rm and}\ 2^{\ell_2}>2^{\ell_1-2}\\
			2^{(n-2+\eta)(\ell_2-\ell_1)},\ \ 2^{\ell_1}>80\la\ {\rm and}\ 2^{\ell_2}\approx 10\la\\
			2^{(n-2+\eta)(\ell_2-\ell_1)}2^{-N_1\ell_2},\ \ 2^{\ell_1}>80\la\ {\rm and}\ 10\la< 2^{\ell_2}\le  2^{\ell_1-2}.
	\end{cases}\end{align}
	and then by Sobolev inequality
	\begin{align*}
		\|\sum_{|k|\in I_{\ell_1}}& b_{k, 2^s}e_k^0(\cdot)\|_{L^{p_0}(M)}\ls 2^{\ell_1/2}2^{\sigma(p_0)\ell_1}	\|\sum_{|k|\in I_{\ell_1}} b_{k, 2^s}e_k^0(\cdot)\|_{L^{2}(M)} \\
		&\ls  2^{(n/2+1-\frac14\eta)\ell_1}\cdot\sup_{k,\tau_\ell}|b_{k,\tau_\ell}|.
	\end{align*}
	And similarly we may obtain the estimates for $\partial_\tau b_{k,\tau}$.
	By applying Lemma~\ref{delta1} with $\delta=2^s$, we have for  fixed $0\le t\le 2^{-2\ell_1}$,
	\begin{align*}
		|\int \sum_{\tau_\ell\approx 2^s }&\sum_{|k|\in I_{\ell_1}} b_{k,\tau_\ell}t^{\mu_1+\mu_2+1}e^{-t\tau_\ell^2} \overline{e_k^0(z)}V(z)e_{\tau_\ell}(z) \overline{e_{\tau_\ell}(x)}dz |\\
		&\ls \|V\|_{L^{\frac{n}{2-\eta}}(M)}2^{ns/2}2^{s/2}2^{(\frac12-\frac14\eta)s}\cdot 2^{(n/2-1-\frac14\eta-2\mu_2)\ell_1}2^{(2\mu_2+2-\eta)\ell_2} \\
		&\quad\cdot\begin{cases}1,\ \ \ \ \ \ \ \ \ \ \ \ \ \  2^{\ell_1}\le 80\la\ {\rm and}\ 2^{\ell_2}\approx 10\la\\
			2^{-N_1\ell_2},\ \ \ \ \ \ \ 2^{\ell_1}\le 80\la\ {\rm and}\ 2^{\ell_2}>10\la	\\
			2^{-N_1\ell_2},\ \ \ \ \ \ \ 2^{\ell_1}>80\la\ {\rm and}\ 2^{\ell_2}>2^{\ell_1-2}\\
			2^{(n-2+\eta)(\ell_2-\ell_1)},\ \ 2^{\ell_1}>80\la\ {\rm and}\ 2^{\ell_2}\approx 10\la\\
			2^{(n-2+\eta)(\ell_2-\ell_1)}2^{-N_1\ell_2},\ \ 2^{\ell_1}>80\la\ {\rm and}\ 10\la< 2^{\ell_2}\le  2^{\ell_1-2}.
		\end{cases}
	\end{align*}
	Fix $N_1\ge 2N+2n$. Note that $2^s\ls 2^{\ell_1}$ or $2^s\ls 2^{\ell_2}$. Integrating over $0\le t\le 2^{-2\ell_1}$ and summing over $s,\ell_1,\ell_2$, we obtain \eqref{h.4.8}.

	\subsection*{Case 3.2.3c}  We need to show 
	\begin{equation}\label{h.5.1}
		|\sum_{j,k}\sum_{\tau_\ell> 2\la}\int h(|j|)|j|^{2\mu_1}|k|^{2\mu_2}c^2_{k,\tau_\ell}\,e_j^0(x)V_{jk}\overline{e_k^0(z)}V(z)e_{\tau_\ell}(z) \overline{e_{\tau_\ell}(x)}dz|\ls\la^{n-\frac32\eta}.
	\end{equation}
	In this case, we have $|j|\le0.9\tau_\ell$ and $|k|\le 0.9\tau_\ell$ on the support of $\rho_2(|k|/\tau_\ell)\rho_2(|j|/\tau_\ell)$. 
Let $\tau_\ell\in[2^s,2^{s+1}]$, $2^s>2\la$,
	and for $|j|\in I_{\ell_2}$, let
	\begin{equation}\label{h.5.2}
		b_{k,\tau_\ell}=\sum_{|j|\in I_{\ell_2}} h(|j|)|j|^{2\mu_1}|k|^{2\mu_2}e_j^0(x)V_{jk}\rho_2(|k|/\tau_\ell)\rho_2(|j|/\tau_\ell).
	\end{equation}
	Then for $|k|\in I_{\ell_1}$, we have
	\begin{align}\label{h.5.3}|b_{k,\tau_\ell}|\ls  2^{2\mu_1\ell_1}2^{(2\mu_2+2-\eta)\ell_2}\cdot\begin{cases}1,\ \ \ \ \ \ \ \ \ \ \ \ \ \  2^{\ell_1}\le 80\la\ {\rm and}\ 2^{\ell_2}\approx 10\la\\
			2^{-N_1\ell_2},\ \ \ \ \ \ \ 2^{\ell_1}\le 80\la\ {\rm and}\ 2^{\ell_2}>10\la	\\
			2^{-N_1\ell_2},\ \ \ \ \ \ \ 2^{\ell_1}>80\la\ {\rm and}\ 2^{\ell_2}>2^{\ell_1-2}\\
			2^{(n-2+\eta)(\ell_2-\ell_1)},\ \ 2^{\ell_1}>80\la\ {\rm and}\ 2^{\ell_2}\approx 10\la\\
			2^{(n-2+\eta)(\ell_2-\ell_1)}2^{-N_1\ell_2},\ \ 2^{\ell_1}>80\la\ {\rm and}\ 10\la< 2^{\ell_2}\le  2^{\ell_1-2}
	\end{cases}\end{align}
	and then by Sobolev inequality
	\begin{align*}
		\|\sum_{|k|\in I_{\ell_1}}& b_{k, 2^s}e_k^0(\cdot)\|_{L^{p_0}(M)}\ls 2^{\ell_1/2}2^{\sigma(p_0)\ell_1}	\|\sum_{|k|\in I_{\ell_1}} b_{k, 2^s}e_k^0(\cdot)\|_{L^{2}(M)} \\
		&\ls  2^{(n/2+1-\frac14\eta)\ell_1}\cdot\sup_{k,\tau_\ell}|b_{k,\tau_\ell}|.
	\end{align*}
	And similarly we may obtain the estimates for $\partial_\tau b_{k,\tau}$.
	Recall that $|w(x)|\ls e^{-x/2}$. So for $\tau_\ell\approx 2^s$ we have
	\[|w(\tau_\ell^22^{-2\ell_1})|\ls 2^{-N_2(s-\ell_1)},\ \forall N_2.\] By applying Lemma~\ref{delta1} with $\delta=2^s$, we have for  fixed $0\le t\le 2^{-2\ell_1}$,
	\begin{align*}
		|\int& \sum_{\tau_\ell\approx 2^s }\sum_{|k|\in I_{\ell_1}} b_{k,\tau_\ell}\frac{w(\tau_\ell^22^{-2\ell_1})}{2^{2\ell_1(\mu_1+\mu_2+2)}} \overline{e_k^0(z)}V(z)e_{\tau_\ell}(z) \overline{e_{\tau_\ell}(x)}dz |\\
		&\ls  \|V\|_{L^{\frac{n}{2-\eta}}(M)}2^{ns/2}2^{s/2}2^{(\frac12-\frac14\eta)s}2^{-N_2(s-\ell_1)}\cdot2^{(\frac n2-1-\frac14\eta-2\mu_2)\ell_1}2^{(2\mu_2+2-\eta)\ell_2}\\
		&\quad\cdot\begin{cases}1,\ \ \ \ \ \ \ \ \ \ \ \ \ \  2^{\ell_1}\le 80\la\ {\rm and}\ 2^{\ell_2}\approx 10\la\\
			2^{-N_1\ell_2},\ \ \ \ \ \ \ 2^{\ell_1}\le 80\la\ {\rm and}\ 2^{\ell_2}>10\la	\\
			2^{-N_1\ell_2},\ \ \ \ \ \ \ 2^{\ell_1}>80\la\ {\rm and}\ 2^{\ell_2}>2^{\ell_1-2}\\
			2^{(n-2+\eta)(\ell_2-\ell_1)},\ \ 2^{\ell_1}>80\la\ {\rm and}\ 2^{\ell_2}\approx10\la\\
			2^{(n-2+\eta)(\ell_2-\ell_1)}2^{-N_1\ell_2},\ \ 2^{\ell_1}>80\la\ {\rm and}\ 10\la< 2^{\ell_2}\le  2^{\ell_1-2}.
		\end{cases}
	\end{align*}
	Fix $N_2\ge n$ and $N_1\ge N_2+2N+2n$. Note that $\la\ls 2^{\ell_1},2^{\ell_2}\ls 2^s$. Summing over $s,\ell_1,\ell_2$, we obtain \eqref{h.5.1}.
	
	This completes the proof of Case 3.2.3.
		

		\bibliographystyle{plain}
		
	\end{document}